\tikzset{
	>=stealth',
	punktchain/.style={
		rectangle,
		rounded corners,
		draw=black, thick,
		minimum height=3em,
		text centered,
		on chain},
	line/.style={draw, thick, <-},
	element/.style={
		tape,
		top color=white,
		bottom color=blue!50!black!60!,
		minimum width=8em,
		draw=blue!40!black!90, very thick,
		text width=10em,
		minimum height=3.5em,
		text centered,
		on chain},
	every join/.style={->, thick,shorten >=1pt},
	decoration={brace},
	tuborg/.style={decorate},
	tubnode/.style={midway, right=2pt},
}
\tikzset{
>=stealth',
  punktchain/.style={
    rectangle,
    rounded corners,
    draw=black, thick,
    minimum height=3em,
    text centered,
    on chain},
  line/.style={draw, thick, <-},
  element/.style={
    tape,
    top color=white,
    bottom color=blue!50!black!60!,
    minimum width=8em,
    draw=blue!40!black!90, very thick,
    text width=10em,
    minimum height=3.5em,
    text centered,
    on chain},
  every join/.style={->, thick,shorten >=1pt},
  decoration={brace},
  tuborg/.style={decorate},
  tubnode/.style={midway, right=2pt},
}
\newcommand\aaa{\operatorname{\textbf{(A)}}}
\def\abs#1{\left\lvert#1\right\rvert}
\newtheorem*{rep@theorem}{\rep@title}
\newcommand{\newreptheorem}[2]{%
\newenvironment{rep#1}[1]{%
 \def\rep@title{#2 \ref{##1}}%
 \begin{rep@theorem}}%
 {\end{rep@theorem}}}
\newtheorem{Thm}{Theorem}[section]
\newtheorem{Prop}[Thm]{Proposition}
\newtheorem{Lem}[Thm]{Lemma}
\newtheorem{Cor}[Thm]{Corollary}
\newtheorem{thm-int}{Theorem}
\theoremstyle{definition}
\newtheorem{Def-s}[Thm]{Definition}
\newtheorem{Def}[Thm]{Definition}
\newtheorem{Rem}[Thm]{Remark}
\newcommand{\ignore}[1]{}
\begin{document}

\title[Mukai's Program]{Mukai's Program (reconstructing a K3 surface \\ from a curve) via wall-crossing}

\author{Soheyla Feyzbakhsh}
\address{School of Mathematics, Imperial College London, Huxley Building, South Kensington Campus, SW7 2AZ, London, United Kingdom}
\email{s.feyzbakhsh@imperial.ac.uk}


\begin{abstract}
	Let $C$ be a curve of genus $g \geq 11$ such that $g-1$ is a composite number. Suppose $C$ is on a K3 surface whose Picard group is generated by the curve class $[C]$ . We use wall-crossing with respect to Bridgeland stability conditions to generalise Mukai's program to this situation: we show how to reconstruct the K3 surface containing the curve $C$ as a Fourier-Mukai transform of a Brill-Noether locus of vector bundles on $C$.
\end{abstract}

\vspace{-1em}
\maketitle

\section{Introduction}\label{sec:intro}
In this paper, we consider the problem of reconstructing a K3 surface from a curve on that surface. The main result is the following which extends a program proposed by Mukai in \cite[Section 10]{mukai:non-abelian-brill-noether}.
\begin{Thm}\label{main.3}
	Let $(X,H)$ be a polarised K3 surface with Pic$(X) = \mathbb{Z}.H$. Let $C$ be any curve in the linear system $\abs{H}$ of genus $g \geq 11$ such that $g-1$ is a composite number \footnote{In the published version of this paper \cite{feyz:mukai-program}, we also considered the case (B) where $g-1$ is a prime number. But there is a mistake in the proof of \cite[Proposition 5.2 (a)]{feyz:mukai-program} which affects the validity of \cite[Theorem 1.2]{feyz:mukai-program} in case (B), see \cite[Remark 4.1]{feyz:mukai-program-ii} for details. 
		That is why this case has been investigated in another paper \cite{feyz:mukai-program-ii} using a new treatment so that eventually the main result (Theorem 1.1) in \cite{feyz:mukai-program} is proved valid. }. Then $X$ is the unique K3 surface of Picard rank one and genus $g$ containing $C$, and can be reconstructed as a Fourier-Mukai partner of a certain Brill-Noether locus of vector bundles on $C$. 
\end{Thm}
Any K3 surface of Picard rank one has a canonical primitive polarisation and therefore a well-defined genus. Note that the curve $C$ in Theorem \ref{main.3} is not necessarily smooth and can be singular. The missing cases where $g -1$ is a prime number are considered in \cite{feyz:mukai-program-ii}.

Write $g= rs+1$ for two integers $r\geq 2$ and $s\geq \max\{r,5\}$. We consider the Brill-Noether locus $ \mathcal{BN}\coloneqq M_C(r,2rs,r+s)$ of slope semistable rank $r$-vector bundles on the curve $C$ having degree $2rs$ and possessing at least $r+s$ linearly independent global sections. Let $M_{X,H}(v)$ be the moduli space of $H$-Gieseker semistable sheaves with Mukai vector $v = (r,H,s)$ on $X$. We have chosen the Mukai vector $v$ such that it is a primitive class with $v^2 =0$, hence $M_{X,H}(v)$ is a K3 surface as well. Moreover, any $H$-Gieseker semistable sheaf $E \in M_{X,H}(v)$ is a slope stable locally free sheaf. The choice of the Brill-Noether locus $\mathcal{BN}$ is justified by the following Theorem. 
\begin{Thm}\label{main.2}
	Let $(X,H)$ be a polarised K3 surface with Pic$(X) = \mathbb{Z}.H$ and let $C$ be any curve in the linear system $|H|$. We have an isomorphism 
	\begin{equation}\label{function}
	\psi \colon M_{X,H}(v) \rightarrow \mathcal{BN}
	\end{equation}   
	with $\mathcal{BN}$ as defined above, which sends a bundle $E$ on $X$ to its restriction $E|_C$.
\end{Thm}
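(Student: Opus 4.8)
The plan is to realise $\psi$ as a bijective morphism between two smooth projective surfaces and to produce a two-sided inverse by wall-crossing; I describe Case (A), Case (B) being identical in structure. First I would verify that $\psi$ is well defined, namely that $E|_C\in T$ for every $E\in N$. Tensoring $0\to\mathcal{O}_X(-H)\to\mathcal{O}_X\to\mathcal{O}_C\to 0$ with $E$ gives the restriction sequence
\[
0 \longrightarrow E(-H) \longrightarrow E \longrightarrow i_\ast(E|_C) \longrightarrow 0,
\]
where $i\colon C\hookrightarrow X$. A Chern-class computation shows $E|_C$ has rank $r$ and degree $H^2=2rs$, the invariants prescribed by $T=B_C^{r+s}(r,2rs)$. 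Because $E$ is $H$-Gieseker stable with $\mu(E)>0$, the twist $E(-H)$ is stable of negative slope, so $H^0(X,E(-H))=0$ and, by Serre duality, $H^2(X,E)=H^0(X,E^\vee)^\ast=0$; hence $h^0(C,E|_C)\ge h^0(X,E)\ge\chi(X,E)=r+s$, which is exactly the Brill--Noether condition defining $T$.

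It remains, for well-definedness, to see that $E|_C$ is semistable and that $\psi$ is a morphism. For semistability I would invoke a restriction theorem of Bogomolov--Mehta--Ramanathan/Flenner type: a short computation gives the discriminant $\Delta(E)=2r^2$, so the hypothesis $s\ge\max\{r,5\}$ makes $H^2=2rs$ large relative to $\Delta(E)$ and forces the restriction of the $\mu$-stable bundle $E$ to the curve $C\in\abs{H}$ to stay semistable. To upgrade $\psi$ to a morphism I would restrict the universal sheaf $\mathcal{E}$ on $N\times X$ to $N\times C$, obtaining a flat family of semistable bundles on $C$ with the correct invariants and section rank, and hence a classifying morphism into the determinantal Brill--Noether scheme $T$.

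The core of the proof is the construction of an inverse $\phi\colon T\to N$, and this is where I expect the real work to lie. Given $F\in T$ I would form $i_\ast F\in\mathrm{Coh}(X)\subset D^b(X)$, whose Mukai vector $\bar v-v(E(-H))=(0,rH,rs(2-r))$ is read off from the restriction triangle. I would then choose a Bridgeland stability condition $\sigma=\sigma_{\alpha,\beta}$ and study the wall $W$ on which $\bar v$, $v(E(-H))$ and $v(i_\ast F)$ become collinear, so that the restriction sequence $E(-H)\to E\to i_\ast F$ is a Jordan--H\"older-type sequence of $\sigma$-semistable objects of one phase along $W$. The $\ge r+s$ sections of $F$, transported to $X$ by the adjunction $\mathrm{Hom}_X(\mathcal{O}_X,i_\ast F)\cong\mathrm{Hom}_C(\mathcal{O}_C,F)$, should furnish the extension data that single out, on the large-volume side of $W$, a $\sigma$-stable object $E_F$ admitting $i_\ast F$ as a quotient; I would then show $E_F$ is a locally free $H$-Gieseker stable sheaf with $v(E_F)=\bar v$ and $E_F|_C\cong F$. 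Performing this construction in families over $T$ yields the morphism $\phi$, and tracing the two composites shows $\phi$ and $\psi$ are mutually inverse.

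The main obstacle is the control of the wall-crossing: I must prove that $W$ is a genuine wall, that it is the \emph{outermost} relevant one so that no intervening wall alters the Harder--Narasimhan or Jordan--H\"older type of the objects in play, and that the transformed object $E_F$ is locally free with \emph{exactly} the Mukai vector $\bar v$. This is precisely where the numerical hypotheses are decisive: the bounds $s\ge\max\{r,5\}$ together with $g=11$ or $g\ge 13$ (respectively $p\ge 13$ in Case (B)) constrain the discriminant so tightly relative to $H^2$ that every potential destabilising subobject would carry a Mukai vector violating the Bogomolov inequality or a nonexistence bound, excluding spurious walls. Granting this, $\psi$ and $\phi$ are mutually inverse morphisms between the smooth projective K3 surfaces $N$ (a K3 since $\bar v$ is primitive with $\bar v^2=0$) and $T$, so $\psi$ is an isomorphism. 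If one prefers a direct argument, $\psi$ is a bijective morphism and the long exact sequence obtained by applying $\mathrm{Hom}_X(E,-)$ to the restriction sequence identifies the differential $d\psi$ with an isomorphism onto the tangent space of $T$, again giving the result.
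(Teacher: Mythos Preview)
Your outline has the right architecture---well-definedness, bijectivity, then an isomorphism argument---but the mechanisms you propose for the two hard steps diverge from the paper's and, as written, have genuine gaps.

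\textbf{Stability of $E|_C$.} You appeal to a Bogomolov--Flenner restriction theorem. Those results either require the curve to be \emph{generic} in $|H|$ (Mehta--Ramanathan, Flenner) or, in their effective versions for \emph{all} smooth curves, impose a numerical bound that is not obviously satisfied here with $a=1$, $\Delta(E)=2r^2$, $H^2=2rs$. The paper sidesteps this entirely: it shows that $E$ and $E(-H)[1]$ are Bridgeland-stable of the same phase at a specific stability condition $\tilde\sigma$, so the extension $i_\ast(E|_C)$ is $\tilde\sigma$-semistable; then, since the only JH factors have these Mukai vectors, $i_\ast(E|_C)$ is in fact stable just above $\tilde\sigma$, hence slope-stable on $C$. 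The same argument gives $h^0(C,E|_C)=r+s$ exactly (not merely $\ge$), which is needed later.

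\textbf{The inverse.} Your proposal that ``the $\ge r+s$ sections furnish the extension data that single out $E_F$'' is not how the reconstruction works, and I do not see how to make it work directly: the sections give maps $\mathcal{O}_X\to i_\ast F$, not an extension class producing a rank-$r$ bundle with Mukai vector $\bar v$. The paper's mechanism is different and is its main technical contribution. One first proves (Proposition~\ref{first wall.1}) that the wall bounding the Gieseker chamber for $i_\ast F$ lies on or above the line through $pr(\bar v)$ and $pr(v-\bar v)$. One then studies the Harder--Narasimhan filtration of $i_\ast F$ at stability conditions $\sigma_{(0,w)}$ with $w$ just above $\sqrt{2/H^2}$, and proves a \emph{length bound} (Proposition~\ref{polygon}): $h^0$ is controlled by the $\|\cdot\|$-perimeter of the HN polygon $P_F$. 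The first-wall result forces $P_F$ to sit inside a fixed triangle $T$; an explicit estimate then shows that if $P_F\subsetneq T$ the perimeter is too short to allow $h^0\ge r+s$. Hence $P_F=T$, the first HN factor $E_1$ has Mukai vector $\bar v$, and $F\cong E_1|_C$. This polygon argument is absent from your sketch and is the step that actually converts ``many sections'' into ``restriction of a bundle on $X$''.

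\textbf{Isomorphism.} You cannot conclude ``bijective morphism between smooth projective surfaces'' because smoothness of $T$ is not known a priori; the paper proves it by showing $d\psi$ is an isomorphism. Injectivity of $d\psi$ follows from $\mathrm{Hom}_X(E,E(-H)[1])=0$, which you would get from your wall picture. Surjectivity, however, uses a further vanishing $\mathrm{Hom}_X(E',E(-H)[1])=0$, where $E'[1]$ is the cone of the evaluation map $\mathcal{O}_X^{\,r+s}\to E$; this object and its stability properties do not appear in your outline, and the diagram chase that turns a tangent vector $\xi\in\ker k_1$ at $[F]$ into $i^\ast\xi'$ for some $\xi'\in\mathrm{Ext}^1_X(E,E)$ hinges on that vanishing.
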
    
In other words, special vector bundles on the curve $C$, which have an unexpected number of global sections, are the restriction of vector bundles on the surface $X$. This is analogous to the case of line bundles, where a well-known theorem by Green and Lazarsfeld \cite{lazarsfeld:special-divisor-on-a-k3-surface} says that the Clifford index of a non-Clifford general curve on a K3 surface can be computed by the restriction of a line bundle on the surface. \par 
There exists a Brauer class $\alpha \in Br(\mathcal{BN})$ and a universal $(1 \times \alpha)$-twisted sheaf $\mathcal{E}$ on $C \times \mathcal{BN}$. Define $v' \in H^*\big(\mathcal{BN},\mathbb{Z}\big)$ to be the Mukai vector of $\mathcal{E}|_{p \times \mathcal{BN}}$ for a point $p$ on the curve $C$ (see \cite{huybrechts:equivalence-of-twisted-k3-surfaces} for definition in case $\alpha \neq 1$).    
\begin{Thm}\label{main.1}
	Let $(X,H)$ be a polarised K3 surface with Pic$(X) = \mathbb{Z}.H$ of genus $g \geq 11$ such that $g-1$ is a composite number, and let $C$ be any curve in the linear system $|H|$.  
	Then any K3 surface of Picard rank one and genus $g$ which contains the curve $C$ is isomorphic to the moduli space $M_{\mathcal{BN},H'}^{\alpha}(v')$ of $\alpha$-twisted sheaves on $\mathcal{BN}$ of Mukai vector $v'$ which are semistable with respect to a generic polarisation $H'$ on $\mathcal{BN}$.    
\end{Thm}
The embedding of the curve $C$ into the K3 surface $M_{\mathcal{BN},H'}^{\alpha}(v')$ is given by $p \mapsto \mathcal{E}|_{p \times \mathcal{BN}}$. Combining Theorems \ref{main.2} and \ref{main.1} gives Theorem \ref{main.3}. 
\subsection{Previous work} \label{pr}
Let $\mathcal{F}_g$ be the moduli space of polarised K3 surfaces $(X,H)$ where $H$ is a primitive ample line bundle on $X$ and $H^2 = 2g-2$. This space is a quasi-projective variety of dimension $19$. Let $\mathcal{P}_g$ be the moduli space of triples $(X,H,C)$ such that $(X,H) \in \mathcal{F}_g$ and $C$ is a smooth curve in the linear system $|H|$. Therefore, its dimension is $19+g$. Finally, let $\mathcal{M}_g$ be the moduli space of smooth curves of genus $g$. Its dimension is $3g-3$. The space $\mathcal{P}_g$ has natural projections to $\mathcal{F}_g$ and $\mathcal{M}_g$ which we denote by $\phi_g$ and $m_g$, respectively;
\begin{equation*}\label{stacks}
\xymatrix{
	& \mathcal{P}_g \ar[dr]^-{\phi_g} \ar[dl]_-{m_g} \\
	\mathcal{M}_g&&\mathcal{F}_g }
\end{equation*}
The map $m_g$ is dominant for $g\leq 11$ and $g \neq 10$ \cite{mukai:curve-k3surface-genus-less-than-10}. In \cite[Theorem 5]{cilebreto:projective-degeneration-of-k3-surface}, Ciliberto, Lopez and Miranda proved that for $g \geq 11$ and $g \neq 12$, the map $m_g$ is birational onto its image. For the exceptional cases $g=10$ or $g=12$, the map $m_{g}$ is neither dominant nor generically finite \cite{mukai:non-abelian-brill-noether}. \par 
In \cite{mukai:non-abelian-brill-noether}, Mukai introduced a geometric program to find the rational inverse of $m_g$ where $g=2s+1$ and $s \geq 5$ odd. His idea to reconstruct the K3 surface is as follows. Let $C$ be a general curve in the image of $m_g$. Consider the Brill-Noether locus $M_C(2,K_C,s+2)^{\text{st}}$ of stable rank $2$-vector bundles on the curve $C$ with canonical determinant and possessing at least $s+2$ linearly independent global sections. Then $M_C(2,K_C,s+2)^{\text{st}}$ is a K3 surface and the K3 surface containing the curve $C$ can be obtained uniquely as a Fourier-Mukai transform of the Brill-Noether locus.\par
This program was completely proved by him in \cite{mukai-curves-k3surface-genus-11} for $g=11$. The key idea is that all vector bundles in the Brill-Noether locus $M_C(2,K_C,7)$ are the restriction of vector bundles on the surface. He first considers a point $(X',C') \in \mathcal{P}_g$ of a special type and shows that the Brill-Noether locus $M_{C'}(2,K_{C'},7)$ is isomorphic to $X'$. Indeed, he proves that both surfaces are isomorphic to the moduli space $M_{X',H'}(v)$ where $v = (2,H,5)$. Given a general pair $(X,C) \in \mathcal{P}_g$, the Brill-Noether locus $M_C(2,K_C,7)$ is a flat deformation of $M_{C'}(2,K_{C'},7)$ and has expected dimension. Thus, it is again a K3 surface and the original K3 surface can be obtained as an appropriate Fourier-Mukai transform of it. \par
Arbarello, Bruno and Sernesi \cite{arbarello:maukai-program} generalised this strategy to higher genera. They proved that for a general pair $(X,C) \in \mathcal{P}_g$ where $g=2s+1 \geq 11$, there is a unique irreducible component $V_C$ of $M_C(2,K_C,s+2)$ such that $(V_C)_{\text{red}}$ is a K3 surface isomorphic to the moduli space $M_{X,H}(v)$ where $v = (2,H,s)$. Then they showed that the original K3 surface can be reconstructed using this component whenever $g \equiv 3$ mod $4$.\par
In this paper, without any deformation argument, we show that for a general pair $(X,C) \in \mathcal{P}_g$, when $g=rs+1 \geq 11$, the Brill-Noether locus $M_C(r,K_C,s+r)$ is isomorphic to the moduli space $M_{X,H}(r,H,s)$. As a result, we prove the uniqueness of the K3 surface of Picard rank one which contains the curve $C$ of genus $g \geq 11$ when $g-1$ is a composite number.
\subsection{The strategy of the proof}
We prove Theorem \ref{main.2} by wall-crossing for the push-forward of semistable vector bundles on the curve $C$, with respect to Bridgeland stability conditions on the bounded derived category $\mathcal{D}(X)$ of $X$. There exists a region in the space of stability conditions where the Brill-Noether behaviour  of \emph{stable} objects is completely controlled by the nearby \emph{Brill-Noether wall}. This wall destabilises objects with non-zero global sections, and arguments similar to \cite{bayer:brill-noether} show that the Brill-Noether loci are mostly of expected dimension. Our first key result, Proposition \ref{polygon}, gives an extension to \emph{unstable} objects: it gives a bound on the number of global sections in terms of their \emph{mass}, i.e. the length of their Harder-Narasimhan polygon.   \par
Consequently, we only need a polygon that circumscribes this Harder-Narasimhan polygon on the left, to bound the number of global sections. For any coherent sheaf, there exists a chamber which is called the Gieseker chamber, where the notion of Bridgeland stability coincides with the old notion of Gieseker stability. Unlike the case of push-forward of line bundles considered in \cite{bayer:brill-noether}, the Brill-Noether wall is not adjacent to the Gieseker chamber for the push-forward of semistable vector bundles $F$ of higher ranks on the curve $C$. However, the wall that bounds the Gieseker chamber provides an extremal polygon which contains the Harder-Narasimhan polygon, see e.g. Lemma \ref{inside poly}. Combined with Proposition \ref{polygon}, this gives a bound on the number of global sections of vector bundles on the curve $C$; the proof also shows that the bound is achieved if and only if the vector bundle $F$ is the restriction of a vector bundle on the surface.   
\subsection*{Plan of the paper} Section \ref{section.2} reviews the definition of geometric stability conditions on K3 surfaces and describes a two-dimension family of stability conditions. Section \ref{section.3} deals with the Brill-Noether wall; we provide an upper bound for the number of global sections via the geometry of Harder-Narasimhan polygon. Section \ref{section.4} concerns the proof of bijectivity of the morphism $\psi$ in \eqref{function}. The proof of the main result is contained in Section \ref{section.6}.

\subsection*{Acknowledgement}I would like to thank Arend Bayer for many useful discussions. I am grateful for comments by Benjamin Bakker, Gavril Farkas, Chunyi Li, Hsueh-Yung Lin, Richard Thomas, Yukinobu Toda and Bach Tran. I would also like to thank the referees for their careful reading of the paper, and for many useful suggestions. The author was supported by the ERC starting grant WallXBirGeom 337039.

\section{Bridgeland stability conditions on K3 surfaces}\label{section.2}
In this section, we give a brief review of a two-dimensional family of Bridgeland stability conditions on the bounded derived category of coherent sheaves on a K3 surface. The main references are \cite{bridgeland:stability-condition-on-triangulated-category,bridgeland:K3-surfaces}.
\subsection{Bridgeland stability conditions} Let $(X,H)$ be a smooth polarised K3 surface with Pic$(X) = \mathbb{Z}.H$. We denote by $\mathcal{D}(X)$ the bounded derived category of coherent sheaves on the surface $X$. The
Mukai vector of an object $E \in \mathcal{D}(X)$ is an element of the lattice $\mathcal{N}(X) = \mathbb{Z} \oplus \text{NS}(X) \oplus \mathbb{Z} \cong \mathbb{Z}^3$ defined via
\begin{equation*}
v(E) = \big(\text{rk}(E),\text{c}(E)H,\text{s}(E)\big) = \text{ch}(E)\sqrt{\text{td}(X)} \in H^*(X,\mathbb{Z}),
\end{equation*}
where ch$(E)$ is the Chern character of $E$. The Mukai bilinear form 
\begin{equation*}
\left \langle v(E), v(E')\right \rangle = \text{c}(E)\text{c}(E')H^2 -\text{rk}(E)\text{s}(E') - \text{rk}(E')\text{s}(E) 
\end{equation*}
makes $\mathcal{N}(X)$ into a lattice of signature $(2,1)$. The Riemann-Roch theorem implies that this form is the negative of the Euler form, defined as
\begin{equation*}
\chi(E,E') = \sum_{i} (-1)^{i} \dim_{\mathbb{C}} \text{Hom}_X^{i}(E,E')  =  -\left \langle v(E), v(E')\right \rangle.
\end{equation*} 
Note that the Euler form $\chi(-,-)$ defines a bilinear form on the Grothendieck group $K(X)$ which descends to a non-degenerate form on the lattice 
\begin{equation*}
\mathcal{N}(X) = K(X)/K(X)^{\perp},
\end{equation*}
where $K(X)^{\perp}$ is the left-radical. 
Recall that for a coherent sheaf $E$ with positive rank $\text{rk}(E) >0$, the slope is defined as 
\begin{equation*}
\mu_H(E) \coloneqq \dfrac{\text{c}(E)}{\text{rk}(E)},
\end{equation*} 
and if $\text{rk}(E) = 0$, define $\mu_H(E) \coloneqq +\infty$.
\begin{Def}
	We say that an object $E \in \text{Coh}(X)$ is $\mu_H$-(semi)stable if for all proper non-trivial subsheaves $F \subset E$, we have $\mu_H(F) < (\leq) \,\mu_H(E)$.	
\end{Def}

A stability function on an abelian category $\mathcal{A}$ is a group homomorphism $Z \colon K(\mathcal{A}) \rightarrow \mathbb{C}$ such that for any non zero object $E \in \mathcal{A}$,
\begin{equation*}
Z(E) \in \mathbb{R}^{>0} \text{exp}(i\pi \phi (E)) \;\; \text{with} \;\; 0 < \phi(E) \leq 1.
\end{equation*}
By \cite[Proposition 3.5]{bridgeland:K3-surfaces}, to give a stability condition on a triangulated category $\mathcal{D}$ is equivalent to giving a bounded t-structure on $\mathcal{D}$ and a stability function on its heart which has the Harder-Narasimhan property. 
Given a real number $b \in \mathbb{R}$, denote by $\mathcal{T}^{b} \subset \text{Coh}(X)$ the subcategory of sheaves $E$ whose quotients $E \twoheadrightarrow F$ satisfy $\mu_H(F) > b$ and by $\mathcal{F}^{b} \subset \text{Coh}(X)$ the subcategory of sheaves $E'$ whose subsheaves $F' \hookrightarrow E'$ satisfy $\mu_H(F') \leq b$. Tilting with respect to the torsion pair $(\mathcal{T}^{b},\mathcal{F}^{b})$ on Coh$(X)$ gives a bounded $t$-structure on $\mathcal{D}(X)$ with heart
\begin{equation*}
\mathcal{A}(b) \coloneqq \{E \in \mathcal{D}(X) \colon E \cong [E^{-1}  \xrightarrow{d} E^0] , \; \text{ker } d \in \mathcal{F}^{b} \; \text{and}\; \text{cok } d \in \mathcal{T}^{b} \} \subset \mathcal{D}(X). 
\end{equation*} 
All the stability functions that we will consider in this paper factor through the surjection
$K(X) \twoheadrightarrow \mathcal{N}(X)$. For a pair $(b,w) \in \mathbb{H} = \mathbb{R} \times \mathbb{R}^{>0}$, the stability function $Z_{(b,w)} \colon \mathcal{N}(X) \rightarrow \mathbb{C}$ is defined as
\begin{equation*}
Z_{(b,w)}(r,cH,s) = \bigg\langle (r,cH,s), \bigg(1,bH, \frac{H^2}{2} (b^2 -w^2) \bigg) \bigg\rangle + i \bigg\langle (r,cH,s),\bigg(0,\frac{H}{H^2},b\bigg) \bigg\rangle.
\end{equation*}
We denote the root system by $\Delta(X) \coloneqq \{ \delta \in \mathcal{N}(X) \colon \; \langle \delta , \delta \rangle =-2 \}$.
\begin{Thm}[\cite{bridgeland:K3-surfaces}]  \label{Bridgeland}
	Suppose $(X,H)$ is a polarised K3 surface with Pic$(X) = \mathbb{Z}.H$. Then the pair $\sigma_{(b,w)} = \big(\mathcal{A}(b),Z_{(b,w)}\big)$ defines a Bridgeland stability condition on $\mathcal{D}(X)$ if for all $\delta \in \Delta(X)$ with rk$(\delta) >0$ and Im$[Z_{(b,w)}(\delta)] =0$ we have Re$[Z_{(b,w)}(\delta)]>0$. 
	The family of stability conditions $\sigma_{(b,w)}$ varies continuously as the pair $(b,w)$ varies in $\mathbb{H}$.
\end{Thm}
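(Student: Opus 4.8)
The plan is to verify the three defining axioms of a Bridgeland stability condition for the pair $\sigma_{(b,w)} = (\mathcal{A}(b), Z_{(b,w)})$ — that $\mathcal{A}(b)$ is the heart of a bounded t-structure, that $Z_{(b,w)}$ is a stability function on it, and that the Harder--Narasimhan property holds — together with the support/local-finiteness condition, and then to read off continuity from the explicit form of $Z_{(b,w)}$. The first axiom is formal: $(\mathcal{T}^{b},\mathcal{F}^{b})$ is a torsion pair on $\mathrm{Coh}(X)$ because every coherent sheaf splits, via its $\mu_H$-Harder--Narasimhan filtration, into a maximal subsheaf with all slopes $>b$ (landing in $\mathcal{T}^{b}$) and a quotient with all slopes $\leq b$ (landing in $\mathcal{F}^{b}$), while $\mathrm{Hom}(\mathcal{T}^{b},\mathcal{F}^{b})=0$. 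Tilting at a torsion pair always produces the heart of a bounded t-structure, so $\mathcal{A}(b)$ is such a heart, with $H^{-1}(E)\in\mathcal{F}^{b}$ and $H^{0}(E)\in\mathcal{T}^{b}$ for every $E\in\mathcal{A}(b)$.

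The heart of the matter — and the step I expect to be the main obstacle — is positivity: showing that $Z_{(b,w)}(E)$ lies in the strict upper half-plane or on the negative real axis for every nonzero $E\in\mathcal{A}(b)$. Writing $E$ in the triangle $H^{-1}(E)[1]\to E\to H^{0}(E)$ and using additivity of $Z_{(b,w)}$, I would first check $\mathrm{Im}\,Z_{(b,w)}=c-br\geq 0$: objects of $\mathcal{T}^{b}$ contribute non-negatively (positively for the torsion-free part, and $c\geq 0$ for torsion), while objects of $\mathcal{F}^{b}$ contribute non-positively, so after the shift both summands are non-negative. The delicate case is $\mathrm{Im}\,Z_{(b,w)}(E)=0$, which forces $H^{0}(E)$ to be a zero-dimensional sheaf and $H^{-1}(E)$ to be $\mu_H$-semistable of slope exactly $b$. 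Since the zero-dimensional part contributes $-\ell\leq 0$ to the real part, it remains to prove $\mathrm{Re}\,Z_{(b,w)}(F)>0$ for every nonzero $\mu_H$-semistable $F$ of slope $b$; passing to Jordan--Hölder factors reduces this to $\mu_H$-stable $F$. Here K3 geometry enters: the Mukai pairing formula recalled above gives $v(F)^{2}=\mathrm{ext}^{1}(F,F)-2\geq -2$, and plugging $c(F)=b\,r(F)$ into the real part yields $\mathrm{Re}\,Z_{(b,w)}(F)=\tfrac{H^{2}}{2}r(b^{2}+w^{2})-s$, which is automatically positive when $v(F)^{2}\geq 0$ (using $w>0$), and when $v(F)^{2}=-2$ the class $v(F)=(r,brH,s)$ is precisely a root of the form appearing in the hypothesis. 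Thus positivity is guaranteed exactly by the assumption $\mathrm{Re}\,[Z_{(b,w)}(\delta)]>0$: a single $\mu_H$-stable spherical sheaf of slope $b$ with non-positive real part would otherwise produce a nonzero object of $\mathcal{A}(b)$ killed by, or pushed to the wrong side of, $Z_{(b,w)}$, so the root condition is indispensable.

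It then remains to establish the Harder--Narasimhan property and local finiteness. I would show $\mathcal{A}(b)$ is noetherian by the standard argument bounding chains of subobjects through the non-negativity of $\mathrm{Im}\,Z_{(b,w)}$ together with boundedness of the torsion-free slopes, and then invoke Bridgeland's general criterion that a stability function on a noetherian heart with suitably controlled imaginary part admits Harder--Narasimhan filtrations; the finiteness of the vanishing locus $\{\mathrm{Im}\,Z_{(b,w)}=0\}$ supplies the remaining input. The support property follows from the Bogomolov--Hodge-index quadratic form on $\mathcal{N}(X)$, which bounds $\abs{Z_{(b,w)}(E)}$ below by a fixed quadratic form in $v(E)$ on semistable objects.

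Finally, continuity of $(b,w)\mapsto\sigma_{(b,w)}$ is read off from the formula for $Z_{(b,w)}$, which is real-analytic in $(b,w)\in\mathbb{H}$; on any region where the heart $\mathcal{A}(b)$ is constant this gives a continuous path of central charges, hence of stability conditions, and Bridgeland's deformation theorem identifies the resulting map into the space of stability conditions as continuous even across the loci where the torsion pair $(\mathcal{T}^{b},\mathcal{F}^{b})$ jumps.
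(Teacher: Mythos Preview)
The paper does not give a proof of this theorem: it is stated as a citation of Bridgeland's result \cite{bridgeland:K3-surfaces}, and the text immediately following it merely ``expands upon the statements'' by recalling the definitions of phase, (semi)stability, and HN/JH filtrations. So there is no in-paper proof to compare against.

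Your sketch is a faithful outline of Bridgeland's original argument and is essentially correct. The positivity step is handled well: when $\mathrm{Im}\,Z_{(b,w)}(E)=0$ you correctly reduce to $\mu_H$-stable sheaves $F$ of slope exactly $b$, compute $\mathrm{Re}\,Z_{(b,w)}(F)=\tfrac{H^2}{2}r(b^2+w^2)-s$, dispose of the case $v(F)^2\geq 0$ since then $\tfrac{H^2}{2}rb^2\geq s$, and invoke the hypothesis precisely when $v(F)^2=-2$, i.e.\ when $v(F)=(r,brH,s)$ is a root with $r>0$. (Strictly speaking the object in $\mathcal{A}(b)$ is $F[1]$, so one needs $\mathrm{Re}\,Z_{(b,w)}(F[1])<0$, which is the same inequality; your phrasing is slightly loose here but the computation is right.) The HN property via noetherianity of $\mathcal{A}(b)$ and discreteness of the image of $\mathrm{Im}\,Z_{(b,w)}$, the support property via the Mukai form, and continuity via the explicit dependence of $Z_{(b,w)}$ on $(b,w)$ together with Bridgeland's deformation theorem, are all standard and correctly indicated.
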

Note that the stability condition $\sigma_{(b,w)}$, up to the action of $\tilde{\text{GL}}^{+}(2,\mathbb{R})$, is the same as the stability condition defined in \cite[Section 6]{bridgeland:K3-surfaces}. We expand upon the statements in Theorem \ref{Bridgeland} by explaining the notion of $\sigma_{(b,w)}$-stability and the associated Harder-Narasimhan filtration. For a stability condition $\sigma_{(b,w)}$ and $E \in \mathcal{A}(b)$, we have $Z_{(b,w)}(v(E)) \in \mathbb{R}^{>0}\exp\big(i\pi \phi_{(b,w)}(v(E)) \big)$ where 
\begin{equation*}
\phi_{(b,w)}(v(E)) = \dfrac{1}{\pi}\tan^{-1}\bigg(-\dfrac{\text{Re}[Z_{(b,w)}(v(E))]}{ \text{Im}[Z_{(b,w)}(v(E))]  }  \bigg) + \dfrac{1}{2} \in (0,1].
\end{equation*}
We will abuse notations and write $Z(E)$ and $\phi(E)$ instead of $Z(v(E))$ and $\phi(v(E))$.
\begin{Def}
	We say that an object $E \in \mathcal{D}(X)$ is $\sigma_{(b,w)}$-(semi)stable if some shift $E[k]$ is contained in the abelian category $\mathcal{A}(b)$ and for any non-trivial subobject $E' \subset E[k]$ in $\mathcal{A}(b)$, we have $\phi_{(b,w)}(E') < (\leq) \phi_{(b,w)}(E[k])$.
\end{Def}
Any object $E \in \mathcal{A}(b)$ admits a Harder-Narasimhan (HN) filtration: a sequence 
\begin{equation}\label{filtration}
0=\tilde{E}_0 \subset \tilde{E}_1 \subset \tilde{E}_2 \subset ... \subset \tilde{E}_n=E
\end{equation}
of objects in $\mathcal{A}(b)$ where the factors $E_i \coloneqq \tilde{E}_i/\tilde{E}_{i-1}$ are $\sigma_{(b,w)}$-semistable and 
\begin{equation*}
\phi^{+}_{(b,w)}(E) \coloneqq \phi_{(b,w)}(E_1) > \phi_{(b,w)}(E_2) > ....> \phi_{(b,w)}(E_n) \eqqcolon \phi_{(b,w)}^{-}(E).
\end{equation*} 
In addition, any $\sigma_{(b,w)}$-semistable object $E \in \mathcal{A}(b)$ has a  Jordan-H$\ddot{\text{o}}$lder (JH) filtration into stable factors of the same phase, see \cite[Section 2]{bridgeland:K3-surfaces} for more details. 

Suppose $E_1 \hookrightarrow E_2 \twoheadrightarrow E_3 $ is a short exact sequence in $\mathcal{A}(b)$. Since $H^{i}(E_j) = 0$ for $j=1,2,3$ and $i \neq 0,-1$, taking cohomology gives a long exact sequence of coherent sheaves 
\begin{equation*}
0 \rightarrow H^{-1}(E_1) \rightarrow H^{-1}(E_2) \rightarrow H^{-1}(E_3) \rightarrow H^0(E_1) \rightarrow H^0(E_2) \rightarrow H^0(E_3) \rightarrow 0.
\end{equation*}  
For any pair of objects $E$ and $E'$ of $\mathcal{D}(X)$, Serre duality gives isomorphisms
\begin{equation*}
\text{Hom}_X^i(E,E') \cong \text{Hom}_X^{2-i}(E',E)^*.
\end{equation*} 
If the objects $E$ and $E'$ lie in the heart $\mathcal{A}(b)$, then $\text{Hom}_X^i(E,E') = 0$ if $i<0$ or $i>2$. Suppose the object $E \in \mathcal{A}(b)$ is $\sigma_{(b,w)}$-stable, then $E$ does not have any non-trivial subobject with the same phase, thus $\text{Hom}_X(E,E) = \text{Hom}_X^2(E,E)^* = \mathbb{C}$. This implies 
\begin{equation}\label{bogomolove-k3 surface}
v(E)^2 +2 = \text{Hom}^1_X(E,E) \geq 0.
\end{equation}

To simplify drawing the figures, we always consider the following projection:
\begin{equation*}
pr\colon \mathcal{N}(X) \setminus \{s = 0 \}  \rightarrow \mathbb{R}^2 \;\;,\;\; pr(r,cH,s) = \bigg(\dfrac{c}{s} , \dfrac{r}{s}\bigg).
\end{equation*}  
Take a pair $(b,w) \in \mathbb{H}$, the kernel of $Z_{(b,w)}$ is a line inside the negative cone in $\mathcal{N}(X) \otimes \mathbb{R} \cong \mathbb{R}^3$ spanned by the vector $\big(2,2bH,H^2(b^2+w^2)\big)$. Its projection is denoted by 
\begin{equation*}
k(b,w) \coloneqq pr\big(\ker Z_{(b,w)}\big) = \bigg(\dfrac{2b}{H^2(b^2+w^2)} ,  \dfrac{2}{H^2(b^2+w^2)}    \bigg).
\end{equation*}   
Thus, for any stability condition $\sigma_{(b,w)}$, we associate a point $k(b,w) \in \mathbb{R}^2$. The two dimensional family of stability conditions of form $\sigma_{(b,w)}$, is parametrised by the space 
\begin{equation*}
V(X) \coloneqq \left\{ k(b,w)\colon \;  \text{the pair }\big(\mathcal{A}(b) , Z_{(b,w)}\big) \text{ is a stability condition on } \mathcal{D}(X) \right\}  \subset \mathbb{R}^2
\end{equation*}  
with the standard topology on $\mathbb{R}^2$. 
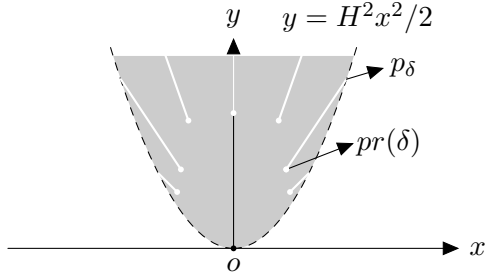
\begin{figure}[h]
	\begin{centering}
		\definecolor{zzttqq}{rgb}{0.27,0.27,0.27}
		\definecolor{qqqqff}{rgb}{0.33,0.33,0.33}
		\definecolor{uququq}{rgb}{0.25,0.25,0.25}
		\definecolor{xdxdff}{rgb}{0.66,0.66,0.66}
		
		\begin{tikzpicture}[line cap=round,line join=round,>=triangle 45,x=1.0cm,y=1.0cm]
		
		\draw[->,color=black] (-3,0) -- (3,0);

		
		

		\fill [fill=gray!40!white] (0,0) parabola (1.6,2.56) parabola [bend at end] (-1.6,2.56) parabola [bend at end] (0,0);
		
		\draw [dashed] (0,0) parabola (1.65,2.72); 
		\draw [dashed] (0,0) parabola (-1.65,2.72);

		\draw [thick, color= white] (0.75,0.75)--(1,1);
		\draw [thick, color=white] (0.7,1.05) -- (1.5,2.25);
		\draw [thick, color=white] (0.6,1.7) -- (0.9148,2.5921);
		
		\draw [thick, color= white] (-0.75,0.75)--(-1,1);
		\draw [thick, color=white] (-0.7,1.05) -- (-1.5,2.25);
		\draw [thick, color=white] (-0.6,1.7) -- (-0.9148,2.5921);

		\draw[color=black] (0,0) -- (0,1.8);
		\draw[->,color=white] (0,1.8) -- (0,2.8);
		\draw[->,color=black] (0,2.7) -- (0,2.8);
		
		
		
		\draw [dashed, color=black] (0.7,1.05) -- (0,0);
		
		
		\draw (1.45,2.25)  node [right ] {$q_{\delta}$};	
		\draw  (0.16,1.1) node [right ] {$p_{\delta}$};
		\draw  (1.65,2.72) node [above] {$y= \frac{H^2x^2}{2}$};
		
		\draw (0,0)  node [below] {$o$};
		
		\draw (0,2.8) node [above] {$y$};
		\draw (3,0) node [right] {$x$};
		
		\begin{scriptsize}
		\fill [color=white] (0.75,0.75) circle (1.1pt);
		\fill [color=white] (0.7,1.05) circle (1.1pt);
		\fill [color=white] (0.6,1.7) circle (1.1pt);  
		
		\fill [color=white] (1,1) circle (1.1pt);
		\fill [color=white] (1.5,2.25) circle (1.1pt);
		\fill [color=white] (-1,1) circle (1.1pt);
		\fill [color=white] (-1.5,2.25) circle (1.1pt);

		
		
		\fill [color=white] (0,1.8) circle (1.1pt);
		\fill [color=white] (0.7,1.05) circle (1.1pt);
		
		\fill [color=white] (-0.75,0.75) circle (1.1pt);
		\fill [color=white] (-0.7,1.05) circle (1.1pt);
		\fill [color=white] (-0.6,1.7) circle (1.1pt);
		\fill [color=black] (0,0) circle (1.1pt);
		
		\end{scriptsize}
		
		\end{tikzpicture}
		
		\caption{The grey area is the 2-dimensional subspace of stability conditions $V(X)$.}
		
		\label{hole}
		
	\end{centering}
\end{figure}	
\begin{Lem}
	We have
	\begin{equation*}
	V(X) = \left\{ (x,y) \in \mathbb{R}^2 \colon \;\; y>\frac{H^2x^2}{2} \right\} \; \setminus \bigcup_{\delta \in \Delta(X)} I_{\delta}
	\end{equation*}
	where $I_{\delta}$ is the closed line segment that connects $p_{\delta} \eqqcolon pr(\delta)$ to $q_{\delta}$ which is 
	the intersection point of the parabola $y=\frac{H^2}{2}x^2$ with the line through the origin and $p_{\delta}$, see Figure \ref{hole}.
	
\end{Lem} 
\begin{proof}
	By definition, the point $k(b,w)$ is above the parabola and for every point $(x,y)$ above the parabola, there exists a unique pair $(b,w) \in \mathbb{H}$ such that $k(b,w)=(x,y)$.
	For any root $\delta \in \Delta(X)$, Im$[Z_{(b,w)}(\delta)] = 0$ if and only if the point $k(b,w)$ is on the line passing through the origin and the point $pr(\delta) = p_{\delta}$. If rk$(\delta)>0$, the line segment $I_{\delta}$ is precisely the segment of this line on which Re$[Z_{(b,w)}(\delta)]>0$, thus the claim follows from Theorem \ref{Bridgeland}. 
\end{proof}
\begin{Rem}\label{plane of having the same phase}
	The point $k(b,w)$ is on the line $x=by$. As $w$ gets larger, the point $k(b,w)$ gets closer to the origin. 
	Take two non-parallel vectors $u,v \in \mathcal{N}(X) \otimes \mathbb{R} \cong \mathbb{R}^3$, then $Z_{(b,w)}(v)$ and $Z_{(b,w)}(u)$ are aligned if and only if the kernel of $Z_{(b,w)}$ in $\mathcal{N}(X) \otimes \mathbb{R}$ lies on the plane spanned by $v$ and $u$, i.e. the points corresponding to $\mathbb{R}.u$, $\mathbb{R}.v$ and $\ker Z_{(b,w)}$ in the projective space $\mathbb{P}_{\mathbb{R}}^2$ are collinear. This, in particular, implies that if three objects $E_1,E_2$ and $E_3$ in $\mathcal{D}(X)$ have the same phase with respect to a stability condition $\sigma_{(b,w)}$, there must be a linear dependence relation among the vectors $v(E_1), v(E_2)$ and $v(E_3)$ in $\mathcal{N}(X) \otimes \mathbb{R}$.
\end{Rem} 
The 2-dimensional family of stability conditions parametrised by the space $V(X)$ admits a chamber decomposition for any object $E \in \mathcal{D}(X)$.
\begin{Prop}\label{line wall}
	Given an object $E \in \mathcal{D}(X)$, there exists a locally finite set of \emph{walls} (line segments) in $V(X)$ with the following properties:
	\begin{itemize*}
		\item[(a)] The $\sigma_{(b,w)}$-(semi)stability or instability of $E$ is independent of the choice of the stability condition $\sigma_{(b,w)}$ in any chamber (which is a connected component of the complement of the union of walls).
		\item[(b)] When $\sigma_{(b_0,w_0)}$ is on a wall $\mathcal{W}_E$, i.e. the point $k(b_0,w_0) \in \mathcal{W}_E$, then $E$ is strictly $\sigma_{(b_0,w_0)}$-semistable.
		\item[(c)] If $E$ is semistable in one of the adjacent chambers to a wall, then it is unstable in the other adjacent chamber.
		\item[(d)] Any wall $\mathcal{W}_E$ is a connected component of $L\cap V(X)$, where $L$ is a line that passes through the point $pr(v(E))$ if $\text{s}(E) \neq 0$, or that has a slope of $\text{rk}(E)/\text{c}(E)$ if $\text{s}(E) = 0$.
	\end{itemize*}
\end{Prop}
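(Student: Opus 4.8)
The plan is to reduce everything to the study of \emph{numerical walls} and then promote them to genuine walls. Fix $v = v(E) = (r,cH,s)$, and for any class $v' = (r',c'H,s') \in \mathcal{N}(X)$ define the numerical wall $W(v,v') \subset V(X)$ to be the locus where $Z_{(b,w)}(v)$ and $Z_{(b,w)}(v')$ have the same phase, i.e.
\[
\operatorname{Re}Z_{(b,w)}(v)\cdot\operatorname{Im}Z_{(b,w)}(v') \;=\; \operatorname{Re}Z_{(b,w)}(v')\cdot\operatorname{Im}Z_{(b,w)}(v).
\]
A change of $\sigma_{(b,w)}$-(semi)stability of $E$ can only happen along such a locus, because it forces a destabilising sub- or quotient object whose central charge is parallel to that of $E$. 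So properties (a)--(d) follow once I (i) compute these loci explicitly, (ii) prove local finiteness, and (iii) upgrade a numerical coincidence of phases to an actual strictly semistable filtration.

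First I would carry out (i), which is a direct computation yielding property (d). From the definition of $k(b,w)$ one has $b = x/y$ and $b^2+w^2 = 2/(H^2y)$, so in the coordinates $(x,y)$, after clearing the positive factors $y^2$ and $y$ respectively,
\[
y^2\operatorname{Re}Z_{(b,w)}(v) = H^2 c\,xy - s\,y^2 - H^2 r\,x^2 + r\,y, \qquad y\operatorname{Im}Z_{(b,w)}(v) = c\,y - r\,x.
\]
Substituting these into the phase-coincidence equation and clearing the remaining positive power of $y$, the cubic and quadratic terms cancel and the equation collapses to the single linear equation
\[
(sr' - s'r)\,x + (s'c - sc')\,y + (rc' - r'c) = 0 .
\]
This defines a line $L \subset \mathbb{R}^2$; a one-line substitution shows $pr(v) = (c/s, r/s)$ satisfies it when $s \neq 0$, and that $L$ has slope $r/c$ when $s=0$. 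Hence every numerical wall for $E$ is a connected component of $L \cap V(X)$ as claimed, and in fact all of them belong to the pencil of lines through the fixed point $pr(v(E))$. This gives (d) and exhibits the walls as line segments.

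Next I would establish local finiteness. Since the $\sigma_{(b,w)}$ are Bridgeland's geometric stability conditions on the K3 surface $X$, they satisfy the support property: there is a quadratic form on $\mathcal{N}(X)\otimes\mathbb{R}$ that is negative definite on $\ker Z_{(b,w)}$ and non-negative on every $\sigma_{(b,w)}$-semistable class. For $\sigma_{(b,w)}$ ranging over a compact $K \subset V(X)$, a destabilising $F \hookrightarrow E$ in $\mathcal{A}(b)$ has $0 < \operatorname{Im}Z(F) \le \operatorname{Im}Z(E)$ together with a uniform bound on $\operatorname{Re}Z(F)$ near a wall, and the support property then confines $v(F)$ to a finite set of lattice points. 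Thus only finitely many numerical walls meet $K$, giving the locally finite set of walls. Property (a) is now immediate: on a chamber there is no class of the same phase as $v(E)$ realised by a sub/quotient, so the Harder--Narasimhan type of $E$ cannot jump and its (semi)stability is constant.

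Finally I would address (b) and (c), which I expect to be the main obstacle, since they require turning the numerical coincidence of phases into an actual strictly semistable object. For (b): if $E$ is semistable in an adjacent chamber, then as $\sigma_{(b,w)}$ approaches the wall the phases of its Jordan--H\"older factors converge and two previously distinct ones collide on the wall, so the limiting object is strictly semistable; were $E$ instead stable on the wall, it would remain stable in a neighbourhood, contradicting that $\mathcal{W}_E$ is a genuine wall. For (c): a fixed destabilising subobject $F\subsetneq E$ has $\phi_{(b,w)}(F) - \phi_{(b,w)}(E)$ vanishing exactly along $L$ and changing sign transversally across it, so $F$ destabilises on precisely one side; combined with (a) this shows semistability in one adjacent chamber forces instability in the other. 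We then retain as walls exactly those numerical walls across which such a genuine change of stability occurs, which is a locally finite subset of the line segments produced above.
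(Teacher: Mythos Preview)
Your direct computation for part (d) is correct and gives a nice explicit derivation of the line equation $(sr'-s'r)x + (s'c-sc')y + (rc'-r'c)=0$, from which the claimed incidence/slope conditions follow immediately. The paper instead argues geometrically: two objects have the same phase precisely when $\ker Z_{(b,w)}$ lies in the plane spanned by their Mukai vectors, so $k(b,w)=pr(\ker Z_{(b,w)})$ lies on the line through $pr(v(E))$ and $pr(v(F))$ (or with the appropriate slope when $s(E)=0$). Your computation is more elementary and self-contained; the paper's version makes the projective picture clearer. For (a)--(c) and local finiteness the paper simply cites \cite{bridgeland:K3-surfaces} and \cite{maciocia:the-walls-projective-spaces}, whereas you sketch the standard support-property argument, which is fine.

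There is one genuine gap. You show that every actual wall is contained in a numerical wall, i.e.\ in a line $L$ of the stated form, but you do not justify that it fills out an entire connected component of $L\cap V(X)$. Concretely: if $F\hookrightarrow E$ destabilises at one point of the segment, you need $F$ to remain a subobject of $E$ in the heart $\mathcal{A}(b)$ as $b$ varies along the whole segment, which is not automatic since $\mathcal{A}(b)$ changes with $b$. The paper handles exactly this step by invoking \cite[Proposition~6.22(7)]{macri:intro-bridgeland-stability}; without that (or an equivalent argument tracking $F$ along the line) your claim ``every numerical wall for $E$ is a connected component of $L\cap V(X)$'' is asserted rather than proved. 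A minor point: in your argument for (b), it is the Harder--Narasimhan factors on the unstable side whose phases converge on the wall, not Jordan--H\"older factors on the semistable side; with that fix the argument goes through.
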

\begin{proof}
	The existence of a locally finite set of walls which satisfies properties $(a)$, $(b)$ and $(c)$ is proved in \cite[section 9]{bridgeland:K3-surfaces}, see also \cite{maciocia:the-walls-projective-spaces} for the description of the walls. Remark \ref{plane of having the same phase} implies that a numerical wall for the class $v(E)$ is a line $L$ as claimed in part $(d)$, thus \cite[Proposition 6.22.(7)]{macri:intro-bridgeland-stability} completes the proof of $(d)$. 
\end{proof}
Note that in Proposition \ref{line wall}, we do not assume $v(E)$ is primitive; in particular, $E$ might be strictly semistable in the interior of a chamber.
\begin{Rem}\label{deforming the stability condition along the wall}
	Take an object $E \in \mathcal{D}(X)$, let $L_1$ be a connected component of $L \cap V(X)$ where $L$ is a line as described in Proposition \ref{line wall}, part $(d)$. Suppose $E$ is $\sigma_{(b_0,w_0)}$-(semi)stable for a stability condition $\sigma_{(b_0,w_0)}$ on $L_1$. Then the structure of walls shows that $E$ is (semi)stable with respect to all stability conditions on $L_1$. Moreover, if $E$ is in the heart $\mathcal{A}(b_0)$, by a straightforward computation, one can show that when we deform the stability condition $\sigma_{(b_0,w_0)}$ along the line segment $L_1$, the phase of $E$ is fixed so it remains in the heart.    
\end{Rem}

\subsection*{The two-dimensional subspace of stability conditions } To describe the space $V(X)$, we need to find out the possible positions of the projection of roots. 
We denote by $\gamma_n$ the point $\big(\frac{1}{n} , \frac{H^2}{2n^2}\big)$ on the parabola for any $n \in \mathbb{Q}$, see Figure \ref{no hole.1}.
\begin{Lem}\label{no spherical}
	For any positive number $n  \in \dfrac{1}{2}\mathbb{N}$, define 
	\begin{equation*}
	U_n \coloneqq \left\{ (x,y) \in \mathbb{R}^2 \colon 0 < \abs{x} < \dfrac{1}{n} \; \text{and}\;\; \dfrac{H^2}{2n} \abs{x} < y \right\}.
	\end{equation*}
	If $n\leq \frac{H^2}{2}$, then there is no projection of roots $pr(\delta)$ in $U_n$.
	
	\begin{figure} [h]
		\begin{centering}
			
			\begin{tikzpicture}[line cap=round,line join=round,>=triangle 45,x=1.0cm,y=1.0cm]
			
			\filldraw[fill=gray!40!white, draw=white] (0,0)--(1,1)--(1,2.5)--(-1,2.5)--(-1,1)--(0,0);
			
			\draw[->,color=black] (-3,0) -- (3,0);

			
			

			
			\draw [] (0,0) parabola (1.61,2.5921); 
			\draw [] (0,0) parabola (-1.61,2.5921);

			\draw [ color=black] (0,0)--(1,1);
			\draw [color=black] (0,0)--(-1,1);
			\draw [color=black] (-1,1)--(-1,2.5);
			\draw [color=black] (1,1)--(1,2.5);
			\draw [dashed, color=black] (1,0) -- (1,1);
			\draw [dashed, color=black] (-1,0) -- (-1, 1);
			

			\draw[color=white] (0,1.9) -- (0,2.6);
			\draw [color=black] (0,0)--(0,1.9);
			\draw[->,color=black] (0,2.6) -- (0,2.8);
			
			\draw (-1.1,0) node [below] {$-\frac{1}{n}$};
			\draw (1,0) node [below] {$\frac{1}{n}$};
			\draw  (1.65,2.72) node [above] {$y= \frac{H^2x^2}{2}$};
			\draw (0,2.8) node [above] {$y$};
			\draw (3,0) node [right] {$x$};
			\draw (0,0) node [below] {$o$};
			\draw (-.05,2) node [right] {$o'$};
			\draw (1,1) node [right] {$\gamma_n$};
			\draw (-.93,.9) node [left] {$\gamma_{-n}$};
			
			\begin{scriptsize}
			
			\fill [color=black] (-1,0) circle (1.1pt);
			\fill [color=white] (0,1.9) circle (1.1pt);
			\fill [color=black] (1,0) circle (1.1pt);
			\fill [color=black] (1,1) circle (1.1pt);
			\fill [color=black] (-1,1) circle (1.1pt);
			\fill [color=black] (0,0) circle (1.1pt);
			
			\end{scriptsize}
			
			\end{tikzpicture}
			
			\caption{No projection of roots in the grey area $U_n$}
			
			\label{no hole.1}
			
		\end{centering}
		
	\end{figure}
\end{Lem}
\begin{proof}
	Assume for a contradiction that $pr(\delta = (\tilde{r}, \tilde{c}H, \tilde{s})) \in U_n$, then 
	\begin{equation}\label{qq.0}
	0 < \dfrac{H^2}{2n}\abs{\dfrac{\tilde{c}}{\tilde{s}}} < \abs{\dfrac{\tilde{r}}{\tilde{s}}},  
	\end{equation}
	which implies $\abs{\tilde{c}^2H^2} < \abs{2n\tilde{r}\tilde{c}}$. By assumption $\delta^2 = \tilde{c}^2H^2- 2\tilde{r}\tilde{s} =-2$, so
	\begin{equation}\label{qq.1}
	0 < \abs{\tilde{s}- \dfrac{1}{\tilde{r}}} < \abs{n\tilde{c}}.
	\end{equation}	
	Moreover, 
	\begin{equation}\label{qq.2}
	0< \abs{\dfrac{\tilde{c}}{\tilde{s}}} < \dfrac{1}{n} \;\;\; \Rightarrow\;\;\; 0 < \abs{n\tilde{c}} < \abs{\tilde{s}}.
	\end{equation}
	If $n \in \mathbb{N}$, there is no triple $(\tilde{r},\tilde{c},\tilde{s}) \in \mathbb{Z}^3$ that satisfies both inequalities \eqref{qq.1} and \eqref{qq.2} and if $n \in \dfrac{1}{2}\mathbb{N}$, the only possible case is $\tilde{r} = \pm 1$. But we assumed $2n \leq H^2$ and inequality \eqref{qq.0} implies $0 < \abs{\tilde{c}}< 1$, a contradiction. 
\end{proof}
\begin{Rem}\label{y-axis}
	Note that if the point $pr\big(\delta = (\tilde{r},\tilde{c}H,\tilde{s})\big) = (\tilde{c}/\tilde{s} , \tilde{r}/\tilde{s})$ is on the y-axis, then $\tilde{c} = 0$. Since $\delta^2 = -2 \tilde{r}\tilde{s} = -2$, we have $\tilde{r}=\tilde{s} = \pm 1$ and $pr(\delta) = (0,1) = pr(v(\mathcal{O}_X))$. This point is denoted by $o'$ in Figure \ref{no hole.1}. 
\end{Rem}
Given three positive numbers $m, n,\, \epsilon\in \dfrac{1}{2}\mathbb{N}$ such that $m<n$, the point on the line segment $\overline{\gamma_m\gamma_n}$ with the $x$-coordinate $\frac{1}{m+\epsilon}$ is denoted by $q'_{m,n,\epsilon}$. Also, the point where the line segments $\overline{\gamma_m\gamma_n}$ and $\overline{o\gamma_{n-\epsilon}}$ intersect is denoted by $q_{m,n,\epsilon}$, see Figure \ref{no hole.2}. One can define similar points for the triple $(-m,-n,-\epsilon)$. For two points $q_1, q_2 \in \mathbb{R}^2$, we denote by $[\overline{q_1q_2}]$ the closed line segment which contains both $q_1$ and $q_2$. The open line segment which contains neither $q_1$ nor $q_2$ is denoted by $(\overline{q_1q_2})$ and if it contains only $q_1$ not $q_2$ is denoted by $[\overline{q_1q_2})$.  
\begin{Lem}\label{no spherical.2}
	Take $m, \, n,\, \epsilon \in \dfrac{1}{2}\mathbb{N}$ such that $\epsilon + \frac{1}{2} < n \leq \frac{H^2}{2}$ and 
	\begin{equation}\label{assumption.1}
	m < \dfrac{2\epsilon}{2\epsilon+1}n - \epsilon.
	\end{equation}
	Then there is no projection of roots in the grey area in Figure \ref{no hole.2} and on the open line segments $(\overline{q_{m,n,\epsilon}\;q'_{m,n,\epsilon}})$ and $(\overline{q_{-m,-n,-\epsilon}\;q'_{-m,-n,-\epsilon}})$.
	
	\begin{figure} [h]
		\begin{centering}
			\definecolor{zzttqq}{rgb}{0.27,0.27,0.27}
			\definecolor{qqqqff}{rgb}{0.33,0.33,0.33}
			\definecolor{uququq}{rgb}{0.25,0.25,0.25}
			\definecolor{xdxdff}{rgb}{0.66,0.66,0.66}
			
			\begin{tikzpicture}[line cap=round,line join=round,>=triangle 45,x=1.0cm,y=1.0cm]
			
			\filldraw[fill=gray!40!white, draw=white] (0,0) --(-1.5,3/5)--(-1.5,1)--(-1.88,1.25)--(-4,4.88)--(-4,7.3)--(4,7.3)-- (4,4.88)--(1.88,1.25)--(1.5,1)--(1.5,3/5)--(0,0);
			
			\draw[->,color=black] (-5.7,0) -- (5.7,0);
			\draw[->,color=black] (0,7.3) -- (0,7.5);
			\draw[color=black] (0,0) -- (0,4);
			\draw[color=white] (0,4) -- (0,7.25);

			\draw [] (0,0) parabola (-5.2,7.14); 
			\draw [] (0,0) parabola (5.2,7.14);
			
			\draw [color=black] (0,0) --(-1.5,3/5);
			\draw [color=black] (0,0) --(1.5,3/5);

			\draw [color=black, dashed] (0,0) --(-2.51,1.67);
			\draw [color=black, dashed] (0,0) --(2.51,1.67);
			\draw [color=black] (-1.5,1)--(-2.51,1.67);
			\draw [color=black] (-1.5,1)--(-1.5,3/5);
			\draw [color=black] (1.5,1)--(2.51,1.67);
			\draw [color=black] (1.5,1)--(1.5,3/5);
			

			\draw [color=black] (-4,4.88)--(-4,7.3);
			\draw [color=black, dashed] (-4,4.88)--(-4,4.2);
			\draw [color=black] (4,4.88)--(4,7.3);
			\draw [color=black, dashed] (4,4.88)--(4,4.2);

			
			
			\draw [color=black] (-1.5,3/5)--(-1.88,1.25);
			
			\draw [color=black] (1.5,3/5)--(1.88,1.25);
			
			\draw [color=gray] (4,4.88)--(1.88,1.25);
			\draw [color=gray] (-4,4.88)--(-1.88,1.25);
			
			\draw [color=black] (4,4.88)--(5,6.6);
			\draw [color=black] (-4,4.88)--(-5,6.6);

			\draw [->,color=black] (-1.88,1.25) arc (220:120:.55cm);
			
			
			\draw [color=black, ->] (1.88,1.25) arc (-40:60:.55cm);
			
			\draw [color=black, ->] (4, 4.88) arc (0:140:.4cm);
			
			
			\draw[->,color=black] (-4,4.88) arc (180:40:.4cm);
			
			

			\draw  (0,0) node [below] {$o$};
			\draw  (-.1,4.1) node [right] {$o'$};
			
			\draw  (1.6,3/5) node [below] {$\gamma_n$};
			\draw  (-1.5,3/5) node [below] {$\gamma_{-n}$};
			
			
			\draw  (5.3,6.8) node [below] {$\gamma_m$};
			\draw  (-5.4,6.8) node [below] {$\gamma_{-m}$};
			\draw  (1.2,1.8) node [above] {$q_{m,n,\epsilon}$};
			\draw  (-.87,1.8) node [above] {$q_{-m,-n,-\epsilon}$};
			\draw  (3.8,5) node [left] {$q'_{m,n,\epsilon}$};
			\draw  (-3.4,5) node [right] {$q'_{-m,-n,-\epsilon}$};
			\draw  (3.95,4.1) node [right] {$\gamma_{m+\epsilon}$};
			\draw  (-3.95,4.1) node [left] {$\gamma_{-m-\epsilon}$};
			\draw  (2.48,1.55) node [right] {$\gamma_{n-\epsilon}$};
			\draw  (-2.42,1.55) node [left] {$\gamma_{-n+\epsilon}$};

			\begin{scriptsize}

			\fill [color=black] (0,0) circle (1.1pt);
			
			\fill [color=black] (-5,6.6) circle (1.1pt);
			\fill [color=black] (5,6.6) circle (1.1pt);
			
			\fill [color=black] (-1.5,3/5) circle (1.1pt);
			\fill [color=black] (-2.51,1.67) circle (1.1pt);
			\fill [color=black] (-1.88,1.25) circle (1.1pt);
			\fill [color=black] (-1.5,1) circle (1.1pt);
			
			\fill [color=black] (1.5,3/5) circle (1.1pt);
			\fill [color=black] (2.51,1.67) circle (1.1pt);
			\fill [color=black] (1.88,1.25) circle (1.1pt);
			\fill [color=black] (1.5,1) circle (1.1pt);

			\fill [color=white] (0,4) circle (1.1pt);

			\fill [color=black] (-4,4.2) circle (1.1pt);
			\fill [color=black] (-4,4.88) circle (1.1pt);
			\fill [color=black] (4,4.2) circle (1.1pt);
			\fill [color=black] (4,4.88) circle (1.1pt);

			\end{scriptsize}
			
			\end{tikzpicture}
			
			\caption{No projection of roots in the grey area}
			
			\label{no hole.2}
			
		\end{centering}
		
	\end{figure}	 
\end{Lem}
\begin{proof} 
	We show that the claimed region is contained in a suitable union of the $U_k$'s. Clearly, it is enough to prove that the open line segment $(\overline{q_{m,n,\epsilon}\;q'_{m,n,\epsilon}})$ is covered completely by a union of the $U_k$'s. Given a number $k \in \frac{1}{2}\mathbb{N}$ where $m < k <n$, the point where the line segments $\overline{\gamma_m\gamma_n}$ and $\overline{o\gamma_k}$ intersect is denoted by $\gamma'_k$, see Figure \ref{no hole.3}.
	\begin{figure} [h]
		\begin{centering}
			\definecolor{zzttqq}{rgb}{0.27,0.27,0.27}
			\definecolor{qqqqff}{rgb}{0.33,0.33,0.33}
			\definecolor{uququq}{rgb}{0.25,0.25,0.25}
			\definecolor{xdxdff}{rgb}{0.66,0.66,0.66}
			
			\begin{tikzpicture}[line cap=round,line join=round,>=triangle 45,x=1.0cm,y=1.0cm]
			
			\filldraw[fill=gray!40!white, draw=white] (0,0) --(1.83,1.5)--(1.83,1.7)--(2.13,2)--(2.13,4)--(0,4)--(0,0)--(0,4)--(-2.13,4)--(-2.13,2)--(-1.83,1.7)--(-1.83,1.5)--(0,0);
			
			\draw[->,color=black] (-3,0) -- (3,0);
			\draw[->,color=black, dashed] (0,4) -- (0,4.2);
			\draw[color=black] (0,0) -- (0,2);
			\draw[color=white] (0,2) -- (0,3.75);
			
			\draw [] (0,0) parabola (-3,4); 
			\draw [] (0,0) parabola (3,4);
			
			\draw[color=black] (1.05,.5) -- (2.81,3.5);
			
			\draw[color=black, dashed] (0,0) -- (1.83,1.5);
			\draw[color=black, dashed] (1.83,4) -- (1.83,1.5);
			
			\draw[color=black, dashed] (0,0) -- (2.13,2);
			\draw[color=black, dashed] (2.13,4) -- (2.13,2);
			
			\draw[color=black] (-1.05,.5) -- (-2.81,3.5);
			
			\draw[color=black, dashed] (0,0) -- (-1.83,1.5);
			\draw[color=black, dashed] (-1.83,4) -- (-1.83,1.5);
			
			\draw[color=black, dashed] (0,0) -- (-2.13,2);
			\draw[color=black, dashed] (-2.13,4) -- (-2.13,2);

			
			\draw [color=black, ->] (1.64,1.53) arc (0:140:.4cm);
			\draw [color=black, ->] (-1.64,1.53) arc (180:40:.4cm);
			
			\draw  (0,0) node [below] {$o$};
			\draw  (0,2) node [right] {$o'$};
			\draw  (1.2,.5) node [below] {$\gamma_n$};
			\draw  (3.1,3.6) node [below] {$\gamma_m$};
			\draw  (-1.2,.5) node [below] {$\gamma_{-n}$};
			\draw  (-3.2,3.6) node [below] {$\gamma_{-m}$};
			
			\draw  (2.3,1.5) node [below] {$\gamma_{k+1/2}$};
			\draw  (2.3,2) node [below] {$\gamma_{k}$};
			
			\draw  (-2.3,1.5) node [below] {$\gamma_{-k-1/2}$};
			\draw  (-2.38,2.1) node [below] {$\gamma_{-k}$};
			\draw  (.7,1.3) node [above] {$\gamma'_k$};
			\draw  (-.6,1.3) node [above] {$\gamma'_{-k}$};
			
			\begin{scriptsize}

			\fill [color=black] (0,0) circle (1.1pt);
			\fill [color=white] (0,2) circle (1.1pt);
			\fill [color=black] (1.05,.5) circle (1.1pt);
			\fill [color=black] (2.81,3.5) circle (1.1pt);
			
			\fill [color=black] (1.83,1.5) circle (1.1pt);
			\fill [color=black] (2.13,2) circle (1.1pt);
			
			\fill [color=black] (-1.05,.5) circle (1.1pt);
			\fill [color=black] (-2.81,3.5) circle (1.1pt);
			
			\fill [color=black] (-1.83,1.5) circle (1.1pt);
			\fill [color=black] (-2.13,2) circle (1.1pt);
			
			\fill [color=black] (1.64,1.53) circle (1.1pt);
			\fill [color=black] (-1.64,1.53) circle (1.1pt);
			
			\end{scriptsize}
			
			\end{tikzpicture}
			
			\caption{Two consecutive points}
			
			\label{no hole.3}
			
		\end{centering}
		
	\end{figure}
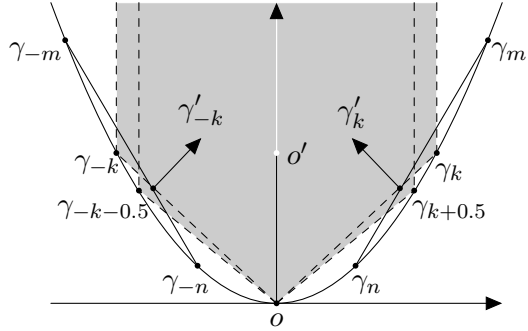
	
	The line segment $(\overline{q_{m,n,\epsilon}\;q'_{m,n,\epsilon}})$ is covered by a union of the $U_k$'s if the point $\gamma'_k$ is inside the area $U_{k+\frac{1}{2}}$ for $m+\epsilon \leq k \leq n-\epsilon-\frac{1}{2}$. 
	The $x$-coordinate of the point $\gamma'_k$ is 
	\begin{equation*}
	x_k = \dfrac{1/mn}{1/m+1/n -1/k}. 
	\end{equation*} 
	One can easily show 
	\begin{equation*}
	f(k) \coloneqq \dfrac{1}{mn}\bigg(k+\dfrac{1}{2}\bigg) + \dfrac{1}{k} \leq \max\left\{f\big(n-\epsilon-1/2\big), f(m+\epsilon) \right\} \overset{(*)}{<} \dfrac{1}{n} + \dfrac{1}{m},
	\end{equation*}
	where $(*)$ follows by the inequality                                                                                                                                                                                                                                                                                                                     \eqref{assumption.1}.
	This gives $x_k <\dfrac{1}{k+\frac{1}{2}}$, so the point $\gamma'_k$ is in $ U_{k+\frac{1}{2}}$. Therefore, the grey region in Figure \ref{no hole.2} is contained in $\bigcup\limits_{m+\epsilon \,\leq \,k \,\leq\, n}U_k\;$ where $k \in \frac{1}{2} \mathbb{N}$ and the claim follows from Lemma \ref{no spherical}.
\end{proof}
\subsection*{Wall and chamber decomposition of $V(X)$} For any coherent sheaf $E$ on $X$, there is a chamber in the subspace of stability conditions $V(X)$ where the notion of Bridgeland stability coincides with the old notion of Gieseker stability which is defined using the Hilbert polynomial \cite[Proposition 14.2]{bridgeland:K3-surfaces}. Recall that the Hilbert polynomial of a coherent sheaf $E$ is defined as
\begin{equation*}
P(E,m) \coloneqq \dfrac{\text{rk}(E)H^2}{2} m^2 + \text{c}(E)H^2 m +\text{s}(E) +\text{rk}(E).
\end{equation*}  
The reduced Hilbert polynomial is $p(E,m) \coloneqq P(E,m)/\alpha(E) $ where $\alpha(E)$ is the leading coefficient of $P(E,m)$. 
\begin{Def}
	A coherent sheaf $E$ on $X$ is called $H$-Gieseker (semi)stable if $E$ is pure (has no subsheaf with lower dimensional support) and for all proper non-trivial subsheaves $F \subset E$, one has $p(F,m) < (\leq) \; p(E,m) $ for $m\gg 0$.
\end{Def}
The notion of slope stability for coherent sheaves on a curve (i.e. an integral separated scheme of dimension one and of finite type over $\mathbb{C}$) is also defined as follows. 
\begin{Def}
	A torsion-free sheaf $F$ on a curve $C$ is slope (semi)stable if for all non-trivial subsheaves $F' \subset F$, we have 
	\begin{equation*}
	\dfrac{\chi(\mathcal{O}_C , F')}{\chi(\mathcal{O}_p , F')} <(\leq) \; \dfrac{\chi(\mathcal{O}_C , F)}{\chi(\mathcal{O}_p , F)},
	\end{equation*}   
	where $\mathcal{O}_p$ is the skyscraper sheaf at the generic point $p$ on the curve $C$ and $\mathcal{O}_C$ is the structure sheaf of $C$.
\end{Def}
Note that $\chi(\mathcal{O}_p,F)$ for the generic point $p$ on the curve $C$ is equal to the rank of the torsion-free sheaf $F$. If we have a closed embedding $i \colon C \hookrightarrow X$ of the curve $C$ into the surface $X$, then the adjoint functors $Li^* \dashv Ri_*$ give $\chi(\mathcal{O}_C,F) = \chi(\mathcal{O}_X,i_*F) = \text{ch}_2(i_*F) = \text{s}(i_*F)$. 
Therefore, the torsion-free sheaf $F$ on $C$ is slope-(semi)stable if and only if $i_*F$ is $H$-Gieseker (semi)stable. The following Lemma introduces the stability conditions that $i_*F$ is stable. 
\begin{Lem}[{\cite[Theorem 3.11]{maciocia:the-walls-projective-spaces}}] \label{large vol}
	Let $F$ be a vector bundle on the curve $C$. 
	\begin{itemize*}
		\item[(a)] If $F$ is slope-(semi)stable, then there exists $w_0 >0$ such that the push-forward $i_*F$ is $\sigma_{(b,w)}$-(semi)stable for any $b \in \mathbb{R}$ and $w >w_0$. 
		\item[(b)] If $i_*F$ is (semi)stable with respect to some stability condition $\sigma_{(b,w)}$, then $F$ is slope-(semi)stable.

	\end{itemize*}
	
\end{Lem}
\begin{proof}
	Any coherent sheaf with rank zero is inside the heart $\mathcal{A}(b)$ for every $b \in \mathbb{R}$. Part $(a)$ follows from \cite[Theorem 3.11]{maciocia:the-walls-projective-spaces} and the fact that
	$\lim\limits_{w \rightarrow \infty} \phi_{(b,w)}(F') = 0$ for any object $F' \in \mathcal{A}(b)$ with rk$(F')>0$. For part $(b)$, suppose $i_*F$ is $\sigma_{(b,w)}$-(semi)stable. Let $F'$ be a subsheaf of $F$, then $i_*F'$ is a subobject of $i_*F$ in the heart $\mathcal{A}(b)$, so 
	\begin{equation*}
	\phi_{(b,w)}(i_*F')  < (\leq)\,  \phi_{(b,w)}(i_*F)  \;\;\; \Rightarrow\;\;\; \dfrac{\text{s}(i_*F')}{\text{c}(i_*F')} < (\leq)\, \dfrac{\text{s}(i_*F)}{\text{c}(i_*F)}.
	\end{equation*}
	This implies $i_*F$ is $H$-Gieseker (semi)stable which gives slope (semi)stability of $F$.  
\end{proof}
For any slope semistable vector bundle $F$ on the curve $C$, the chamber which contains the stability conditions $\sigma_{(b,w)}$ for $w \gg 0$, is called the \emph{Gieseker chamber}. Note that the corresponding point $k(b,w)$ is close to the origin. We use the next lemma to describe regions in $V(X)$ with no walls for a given object.

\begin{Lem}\label{q.3}
	Given a stability condition $\sigma_{(b,w)}$ and an object $E \in \mathcal{D}(X)$ such that 
	\begin{equation*}
	0 < \abs{\text{Im}\, [Z_{(b,w)}(E)]} = \min \bigg\{\abs{\text{Im}\,[Z_{(b,w)}(v')]} \colon v' \in \mathcal{N}(X) \text{  and  } \text{Im} [Z_{(b,w)}(v')] \neq 0 \bigg\},
	\end{equation*}  
	then the stability condition $\sigma_{(b,w)}$ cannot be on a wall for the object $E$. In particular, if $v(E) = (r,cH,s)$ and $b_0=m/n$ for some $m,\, n \in \mathbb{Z}$ such that $nc-mr=\pm 1$, then the stability condition $\sigma_{(b_0,w)}$ cannot be on a wall for $E$.
\end{Lem}
\begin{proof}
	If the stability condition $\sigma_{(b,w)}$ is on a wall $\mathcal{W}_E$, then the object $E$ is strictly $\sigma_{(b,w)}$-semistable. Up to shift, we may assume $E \in \mathcal{A}(b)$, so there are non-trivial objects $E_1, E_2 \in \mathcal{A}(b)$ of the same phase as $E$ such that there is a short exact sequence $E_1 \hookrightarrow E \twoheadrightarrow E_2$. Since $\text{Im}\, [Z_{(b,w)}(E)] \neq 0$, we have $0 < \text{Im}\, [Z_{(b,w)}(E_i)] $ for $i=1,2$ and 
	\begin{equation*}
	\text{Im}\, [Z_{(b,w)}(E)] = \text{Im}\, [Z_{(b,w)}(E_1)] + \text{Im}\, [Z_{(b,w)}(E_2)].
	\end{equation*}
	This is a contradiction to our minimality assumption. If $b_0 = m/n$, then 
	\begin{equation*}
	\text{Im}\, [Z_{(b_0,w)}(E)] = c- \dfrac{m}{n}r = \dfrac{\pm 1}{n}
	\end{equation*} 
	which clearly satisfies the minimality condition.
\end{proof}
We finish this section by the following lemma which introduces stability conditions that a $\mu_H$-stable vector bundle on $X$ is stable with respect to them. 
\begin{Lem}\label{phase one}
	Let $E$ be a $\mu_H$-stable locally free sheaf with Mukai vector $v(E) = (r,cH,s)$. Then $E[1]$ is $\sigma_{(b_0,w)}$-stable of phase one where $b_0 =c/r$. 
\end{Lem}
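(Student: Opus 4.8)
The plan is to verify the three defining features of the claim in turn: that $E[1]$ lies in the heart $\mathcal{A}(b_0)$, that its central charge is a negative real number (so its phase is $1$), and that it admits no proper destabilising subobject of the same phase. First I would place $E[1]$ in the heart. Since $E$ is $\mu_H$-stable of slope $\mu_H(E) = c/r = b_0$, every proper subsheaf has slope strictly below $b_0$ while $E$ itself has slope $b_0$; hence $E \in \mathcal{F}^{b_0}$ and therefore $E[1] \in \mathcal{F}^{b_0}[1] \subset \mathcal{A}(b_0)$. Because $v(E[1]) = -v(E) = (-r,-cH,-s)$, a direct substitution gives $\text{Im}\,[Z_{(b_0,w)}(E[1])] = -c + b_0 r = 0$. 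For an object of the heart this already pins the phase to $1$, \emph{provided} the real part is negative.

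Second, I would show $\text{Re}\,[Z_{(b_0,w)}(E[1])] < 0$. Substituting $b_0 = c/r$ and rewriting through the Mukai form $\langle v(E),v(E)\rangle = c^2H^2 - 2rs$ gives
\[
\text{Re}\,[Z_{(b_0,w)}(E[1])] = -\tfrac{1}{2r}\big(v(E)^2 + H^2 r^2 w^2\big).
\]
If $v(E)^2 \geq 0$ this is manifestly negative. The only other possibility, since a $\mu_H$-stable sheaf is simple and hence $v(E)^2 \geq -2$, is $v(E)^2 = -2$; then $v(E) = (r, b_0 r H, s)$ is precisely a root of the form appearing in Theorem \ref{Bridgeland}, so the hypothesis that $\sigma_{(b_0,w)}$ is a stability condition forces $\text{Re}\,[Z_{(b_0,w)}(v(E))] > 0$, which is exactly $v(E)^2 + H^2 r^2 w^2 > 0$. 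In either case $E[1]$ has phase one.

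Finally, stability. As $1$ is the maximal phase in $(0,1]$, $E[1]$ is automatically semistable, so it suffices to rule out a proper nonzero subobject $A \hookrightarrow E[1]$ in $\mathcal{A}(b_0)$ with $\phi_{(b_0,w)}(A) = 1$; the quotient $Q$ is then also of phase one. Taking the long exact cohomology sequence (for the standard heart) of $0 \to A \to E[1] \to Q \to 0$ yields $\mathcal{H}^0(Q) = 0$ together with
\[
0 \to \mathcal{H}^{-1}(A) \to E \to \mathcal{H}^{-1}(Q) \to \mathcal{H}^0(A) \to 0,
\]
where phase one forces $\mathcal{H}^0(A)$ to be zero-dimensional and both $\mathcal{H}^{-1}(A),\,\mathcal{H}^{-1}(Q)$ to be torsion-free and $\mu_H$-semistable of slope $b_0$. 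Since $\mathcal{H}^{-1}(A)$ is a subsheaf of the $\mu_H$-stable sheaf $E$ of slope $b_0$, it is either $0$ or $E$. If it equals $E$, the sequence gives $\mathcal{H}^{-1}(Q) \cong \mathcal{H}^0(A)$, which is at once torsion-free and zero-dimensional, hence $0$, so $A = E[1]$. If it is $0$, then $E \hookrightarrow \mathcal{H}^{-1}(Q)$ with zero-dimensional cokernel $A = \mathcal{H}^0(A)$; here I would use that $E$ is locally free, whence $E = E^{\vee\vee} = \mathcal{H}^{-1}(Q)^{\vee\vee} \supseteq \mathcal{H}^{-1}(Q) \supseteq E$, forcing the cokernel to vanish and $A = 0$. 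Either way $A$ cannot be proper and nonzero, so $E[1]$ is stable.

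I expect this last paragraph to be the crux: the passage to phase one is pure bookkeeping, but the stability step genuinely uses both the stability of $E$ (to pin down $\mathcal{H}^{-1}(A)$) and its local freeness (the reflexive-hull argument that kills the zero-dimensional cokernel). The role of the standing hypothesis $\sigma_{(b_0,w)} \in V(X)$ is likewise essential and easy to overlook—it is exactly what prevents the spherical case $v(E)^2 = -2$ from spoiling the negativity of the real part.
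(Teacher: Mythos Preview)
Your proof is correct. The first two steps—placing $E[1]$ in the heart and verifying that its phase is one—are carried out with more care than in the paper, which dispatches both points in a single sentence (``By definition, $E[1] \in \mathcal{A}(b_0)$ and has phase one''). Your explicit computation of the real part, including the handling of the spherical case $v(E)^2=-2$ via the defining inequality in Theorem~\ref{Bridgeland}, is a genuine clarification.

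For the stability step the two arguments diverge. The paper passes to a \emph{stable} subobject $E_1$ of phase one and invokes Bridgeland's classification \cite[Lemma~10.1]{bridgeland:K3-surfaces}: such an $E_1$ is either a skyscraper sheaf $k(x)$ or the shift of a locally free sheaf. Local freeness of $E$ kills $\mathrm{Hom}(k(x),E[1])$, so $E_1=F[1]$ with $F$ locally free; the resulting short exact sequence $0\to F\to E\to H^{-1}(E_2)\to 0$ in $\mathrm{Coh}(X)$ then contradicts $\mu_H$-stability of $E$ since $\mu_H(F)=b_0$. Your route is instead entirely self-contained: you analyse the long exact cohomology sequence of an arbitrary phase-one subobject, deduce the structural constraints on the cohomology sheaves directly from the vanishing of the imaginary part, and eliminate the residual zero-dimensional cokernel by the reflexive-hull identity $E=E^{\vee\vee}=\mathcal{H}^{-1}(Q)^{\vee\vee}$. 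The paper's argument is shorter because it outsources the structural analysis to the cited lemma; yours avoids that dependence and makes the role of local freeness (killing the skyscraper contribution) equally transparent, just via double duals rather than a $\mathrm{Hom}$-vanishing.
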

\begin{proof}
	By definition, $E[1] \in \mathcal{A}(b_0)$ and has phase one which automatically implies it is $\sigma_{(b_0,w)}$-semistable. Assume for a contradiction that $E[1]$ is strictly $\sigma_{(b_0,w)}$-semistable. Let $F_1$ be the $\sigma_{(b_0,w)}$-stable subobject of $E[1]$ and $F_2$ be the quotient 
	\begin{equation}\label{exact.1.}
	F_1 \hookrightarrow E[1] \twoheadrightarrow F_2.
	\end{equation}
	Taking cohomology implies that $H^0(F_2) = 0$, so $F_2 = E'[1]$ for a torsion-free sheaf $E'$. Since $F_1$ is $\sigma_{(b_0,w)}$-stable of phase one, \cite[Lemma 10.1]{bridgeland:K3-surfaces} implies that $F_1$ is a skyscraper sheaf $k(x)$ or shift of a locally free sheaf. The sheaf $E$ is locally free, so Hom$_X(k(x), E[1]) = 0$. Thus $F_1$ must be the shift of a locally free sheaf
	and we have the following exact sequence of coherent sheaves
	\begin{equation*}
	0 \rightarrow H^{-1}(F_1) \rightarrow E \rightarrow E' \rightarrow 0.
	\end{equation*}
	Since Im$\big[Z_{(b,w)}\big(H^{-1}(F_1)\big)\big] = \text{Im}[Z_{(b,w)}(E')] = 0$, the sheaves $E'$ and $H^{-1}(F_1)$ have the same slope as $E$, which contradicts the $\mu_H$-stability of $E$. 
\end{proof}
\section{An upper bound for the number of global sections}\label{section.3}
In this section, we study the \emph{Brill-Noether wall} and introduce an upper bound for the number of global sections of objects in $\mathcal{D}(X)$ depending only on the geometry of their Harder-Narasimhan polygons at a certain limit point, see Proposition \ref{polygon}. \par
We always assume $X$ is a smooth K3 surface with Pic$(X) = \mathbb{Z}.H$. Given an object $E \in \mathcal{D}(X)$, we denote its Mukai vector by $v(E) = \big(\text{rk}(E),\text{c}(E)H,\text{s}(E) \big)$. 
\begin{Lem}\label{new formula}
	Let $E \in \mathcal{A}(0)$ be a $\sigma_{(0,w)}$-semistable object with $\phi_{(0,w)}(E) <1$. Then 
	\begin{equation*}
	v(E)^2 \geq -2 \text{c}(E)^2.
	\end{equation*}
\end{Lem}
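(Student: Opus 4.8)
The plan is to decompose $E$ into its stable Jordan--Hölder factors and estimate $v(E)^2$ through the Mukai pairing, exploiting that every factor has a strictly positive — hence integral and $\geq 1$ — middle coordinate. First I would record two preliminary facts. Since $\mathrm{Pic}(X)=\mathbb{Z}.H$, the coordinate $c(F)$ is an integer for every object $F$; and for an object in $\mathcal{A}(0)$ one has $\mathrm{Im}[Z_{(0,w)}(F)]=c(F)\geq 0$, with phase $\phi_{(0,w)}(F)<1$ precisely when $c(F)>0$. Because $E$ is $\sigma_{(0,w)}$-semistable with $\phi_{(0,w)}(E)<1$, its JH factors $A_1,\dots,A_t$ (occurring with multiplicities $m_1,\dots,m_t$) are stable of the common phase $\phi_{(0,w)}(E)<1$; combining the two facts, each satisfies $c(A_l)\geq 1$.

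Next I would expand $v(E)^2$ using bilinearity of the Mukai form. Writing $v(E)=\sum_l m_l\, v(A_l)$ gives
\begin{equation*}
v(E)^2=\sum_{l,l'} m_l m_{l'}\,\langle v(A_l),v(A_{l'})\rangle .
\end{equation*}
For the diagonal terms, each $A_l$ is stable hence simple, so $\mathrm{Hom}^0(A_l,A_l)=\mathbb{C}$ and, by Serre duality on the K3 surface, $\mathrm{Hom}^2(A_l,A_l)\cong\mathrm{Hom}^0(A_l,A_l)^{\vee}=\mathbb{C}$; thus $v(A_l)^2=\dim\mathrm{Hom}^1(A_l,A_l)-2\geq -2$. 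For the off-diagonal terms $l\neq l'$, the $A_l,A_{l'}$ are non-isomorphic stable objects of the same phase, so $\mathrm{Hom}^0(A_l,A_{l'})=\mathrm{Hom}^0(A_{l'},A_l)=0$, and Serre duality gives $\mathrm{Hom}^2(A_l,A_{l'})\cong\mathrm{Hom}^0(A_{l'},A_l)^{\vee}=0$; hence $\langle v(A_l),v(A_{l'})\rangle=\dim\mathrm{Hom}^1(A_l,A_{l'})\geq 0$.

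Combining these, the off-diagonal contributions are non-negative and the diagonal ones are bounded below by $-2m_l^2$, so
\begin{equation*}
v(E)^2\geq -2\sum_{l} m_l^2 .
\end{equation*}
Finally I would close with an elementary comparison to $c(E)$: since each $c(A_l)\geq 1$ we have $c(E)=\sum_l m_l c(A_l)\geq \sum_l m_l$, whence $c(E)^2\geq\big(\sum_l m_l\big)^2\geq\sum_l m_l^2$, and therefore $v(E)^2\geq -2\sum_l m_l^2\geq -2c(E)^2$, as claimed.

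The computation itself is routine; the real content — and the step I would verify most carefully — is that every stable JH factor has $c(A_l)\geq 1$. This is exactly where the hypothesis $\phi_{(0,w)}(E)<1$ is essential, since it forces each factor to have strictly positive middle coordinate, and where the integrality coming from $\mathrm{Pic}(X)=\mathbb{Z}.H$ is used to upgrade $c(A_l)>0$ to $c(A_l)\geq 1$. Without that integrality one would only obtain $c(A_l)>0$, and the concluding Cauchy--Schwarz-type estimate $c(E)^2\geq\sum_l m_l^2$ could fail.
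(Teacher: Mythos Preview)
Your proof is correct and follows essentially the same approach as the paper: take a Jordan--H\"older filtration, use that each stable factor has $c\geq 1$ to bound the number of factors by $c(E)$, and estimate the Mukai pairings between factors. Your organization is slightly sharper --- by grouping isomorphic factors with multiplicities and observing that the off-diagonal pairings are actually $\geq 0$ (not just $\geq -2$) you get the intermediate bound $v(E)^2\geq -2\sum_l m_l^2$ rather than the paper's $-2n^2$ --- but the argument and its essential ingredients are the same.
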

\begin{proof}
	Let $0 =\tilde{E}_0 \subset \tilde{E}_1 \subset ... \subset \tilde{E}_{n-1} \subset \tilde{E}_n = E$ be the Jordan-H$\ddot{\text{o}}$lder filtration of $E$ with respect to the stability condition $\sigma_{(0,w)}$. Since the stable factors $E_i = \tilde{E}_i/\tilde{E}_{i-1}$ have the same phase as $E$, we have $ \text{Im}[Z_{(0,w)}(E_i)] = \text{c}(E_i)>0$. Therefore, the length of the filtration $n$ is at most $\text{c}(E)$. Given two factors $E_i$ and $E_j$, we know $\text{Hom}_X(E_i, E_j) = 0 $ if $E_i \not \cong E_j$ and $\text{Hom}_X(E_i, E_i) = \mathbb{C}$. Thus, for any $0 < i,j \leq n$, 
	\begin{equation*}
	\left \langle v(E_i),v(E_j) \right \rangle  = -\text{hom}_X(E_i,E_j) + \text{hom}^1_X(E_i,E_j) -\text{hom}_X(E_j,E_i) \geq -2,
	\end{equation*}
	which implies
	\begin{equation*}
	v(E)^2 = \sum_{i=1}^{n} v(E_i)^2 + 2\sum_{1 \leq i\,<\,j \leq n} \left \langle v(E_i),v(E_j) \right \rangle \geq -2n^2 \geq -2 \text{c}(E)^2.
	\end{equation*}
\end{proof}
A generalization of the argument in \cite[Section 6]{bayer:brill-noether} implies the following lemma. 
 
\begin{Lem}\label{bound for h}\textbf{(Brill-Noether wall)}
	Let $\sigma_{(b_0,w_0)}$ be a stability condition with $b_0<0$ and $k(b_0,w_0)$ sufficiently close to the point $pr\big(v(\mathcal{O}_X)\big) = (0,1)=o'$. Let $E \in \mathcal{D}(X)$ be a $\sigma_{(b_0,w_0)}$-semistable object with the same phase as the structure sheaf $\mathcal{O}_X$. 
	Then 
	\begin{equation}\label{bound for h.in}
	h^0(X,E) \leq \dfrac{\chi(E)}{2} + \dfrac{\sqrt{ \big(\text{rk}(E)-\text{s}(E)\big)^2 +\text{c}(E)^2(2H^2+4) }}{2},
	\end{equation}
	where $h^0(X,E) = \text{dim}_{\;\mathbb{C}}\,\text{Hom}_{\,X}(\mathcal{O}_X,E)$ and $\chi(E) = \text{rk}(E) +\text{s}(E)$ is the Euler characteristic of $E$. 
\end{Lem}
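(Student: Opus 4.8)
My plan is to compare $E$ with the spherical object $\mathcal{O}_X$ inside the abelian category of $\sigma_{(b,w)}$-semistable objects of its phase, peel off the sections as a copy of $\mathcal{O}_X^{\oplus h^0}$, and bound the self-intersection of the resulting quotient by Lemma \ref{new formula}. First I would record the geometry at the chosen stability condition. Since $b<0$ we have $\text{Im}\,[Z_{(b,w)}(\mathcal{O}_X)] = -b > 0$, so $\mathcal{O}_X \in \mathcal{A}(b)$ with phase $\phi := \phi_{(b,w)}(\mathcal{O}_X) < 1$; moreover $v(\mathcal{O}_X)^2 = -2$, and as $k(b,w)$ is taken close to $pr(v(\mathcal{O}_X)) = (0,1)$ no wall of $\mathcal{O}_X$ meets this region (Proposition \ref{line wall} together with Lemma \ref{no spherical}), so $\mathcal{O}_X$ is $\sigma_{(b,w)}$-stable. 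Because $E$ is $\sigma_{(b,w)}$-semistable of the same phase, both $E$ and $\mathcal{O}_X$ lie in the abelian subcategory $\mathcal{P}(\phi)$ of semistable objects of phase $\phi$, with $\mathcal{O}_X$ simple.

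Next I would form the evaluation $ev\colon V\otimes\mathcal{O}_X \to E$, where $V := \text{Hom}_X(\mathcal{O}_X,E)$ and $\dim_{\mathbb{C}} V = h^0(X,E)$, as a morphism in $\mathcal{P}(\phi)$. Because $\mathcal{O}_X$ is stable with $\text{Ext}^1_X(\mathcal{O}_X,\mathcal{O}_X) = H^1(\mathcal{O}_X) = 0$, every subobject of $V\otimes\mathcal{O}_X \cong \mathcal{O}_X^{\oplus h^0}$ is itself a direct sum of copies of $\mathcal{O}_X$ cut out by a linear subspace $W \subseteq V$; since $ev$ restricted to $W\otimes\mathcal{O}_X$ is the sum of the corresponding sections, $\ker(ev)$ forces $W = 0$, so $ev$ is injective. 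This yields a short exact sequence
\begin{equation*}
0 \to \mathcal{O}_X^{\oplus h^0} \to E \to Q \to 0
\end{equation*}
in $\mathcal{P}(\phi)$ with $Q$ again $\sigma_{(b,w)}$-semistable of phase $\phi$, and applying $\text{Hom}_X(\mathcal{O}_X,-)$ together with $H^1(\mathcal{O}_X)=0$ gives $\text{Hom}_X(\mathcal{O}_X,Q) = 0$ and $v(Q) = v(E) - h^0\,v(\mathcal{O}_X) = (r-h^0,\,cH,\,s-h^0)$.

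The heart of the proof is the estimate $v(Q)^2 \geq -2\,c(Q)^2 = -2\,c(E)^2$, which I would extract from Lemma \ref{new formula}. For this I must pass $Q$ to a stability condition $\sigma_{(0,w)}$ on the $y$-axis: since $k(b,w)$ is close to $(0,1)$, no wall of $Q$ separates it from a nearby point of the $y$-axis (again Lemma \ref{no spherical}), so $Q$ stays semistable there, of phase still less than one. Lemma \ref{new formula} then gives $v(Q)^2 \geq -2\,c(Q)^2$; this step is clean precisely because at $b=0$ the object $\mathcal{O}_X$ has phase one and therefore cannot occur as a Jordan--H\"older factor of the phase-$(<1)$ object $Q$, so the factor count in Lemma \ref{new formula} is governed by $c(Q)=c(E)$. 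Substituting $v(Q)=(r-h^0,cH,s-h^0)$ into $v(Q)^2 = c^2H^2 - 2(r-h^0)(s-h^0) \ge -2c^2$ gives the quadratic inequality $(h^0)^2 - (r+s)h^0 + rs - \tfrac{1}{2} c^2(H^2+2) \le 0$, and solving it, using $(r+s)^2-4rs = (r-s)^2$ and $\chi(E)=r+s$, produces the stated bound.

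I expect the genuine obstacle to be the transport in the third step: verifying that the quotient $Q$ built at $\sigma_{(b,w)}$ remains semistable of phase less than one at a point with $b=0$, so that Lemma \ref{new formula} is legitimately applicable. Everything else is either formal homological algebra in $\mathcal{P}(\phi)$ or the routine solution of a quadratic; the subtle input is the fine position of $k(b,w)$ relative to $(0,1)$ and the no-roots statement of Lemma \ref{no spherical}, which is exactly what guarantees both the stability of $\mathcal{O}_X$ and the absence of intervening walls for $Q$.
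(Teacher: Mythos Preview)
Your overall architecture matches the paper's: establish $\sigma_{(b,w)}$-stability of $\mathcal{O}_X$, form the evaluation map, pass to its cokernel $Q$ in the abelian category of semistables of phase $\phi$, bound $v(Q)^2$ from below, and solve the resulting quadratic in $h^0$. The divergence is entirely in how that lower bound is obtained, and the step you yourself flag as delicate is where the argument breaks.

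You propose to transport $Q$ to a stability condition $\sigma_{(0,w)}$ and invoke Lemma~\ref{new formula} there. But the only potential wall for $Q$ in a small neighbourhood of $(0,1)$ is the line $L$ through $pr(v(\mathcal{O}_X))=(0,1)$ and $pr(v(E))$ --- and that is exactly the line on which your $\sigma_{(b,w)}$ already sits. This line \emph{can} be a genuine wall for $Q$: the vanishing $\text{Hom}_X(\mathcal{O}_X,Q)=0$ only forbids $\mathcal{O}_X$ as a \emph{subobject} of $Q$, not as a Jordan--H\"older factor appearing higher up in a composition series; there is no reason $\text{Hom}_X(Q,\mathcal{O}_X)$ should vanish. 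Since any path from $L$ to the $y$-axis near $(0,1)$ must leave $L$ (their only intersection is the hole $(0,1)$ itself), you cannot conclude that $Q$ stays semistable on the $y$-axis, and Lemma~\ref{no spherical} is of no help because the offending root is $v(\mathcal{O}_X)$ itself.

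The paper sidesteps the transport entirely. It keeps $\sigma_{(b,w)}$ on $L$ and lets $k(b,w)\to(0^-,1)$ along $L$; the JH filtration of $Q$ is then constant along this deformation, and writing each factor's class as $w_i=m_i v(\mathcal{O}_X)+t_i v(E)$, the limit of $\text{Im}\,Z_{(b,w)}(E_i)$ forces $t_i\ge 0$, with $t_i=0$ precisely when $E_i\cong\mathcal{O}_X$. Since $\sum t_i=1$ and $t_i c(E)\in\mathbb{Z}$, there are at most $c(E)$ non-$\mathcal{O}_X$ factors; after stripping off the $i_0$ copies of $\mathcal{O}_X$ one applies $\langle w_i,w_j\rangle\ge -2$ \emph{in situ} (the same mechanism as in Lemma~\ref{new formula}) to get $\bigl(v(E)-(h^0+i_0)v(\mathcal{O}_X)\bigr)^2\ge -2c(E)^2$, and the quadratic bound on $h^0\le h^0+i_0$ follows. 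In short: do not move $Q$; instead rerun the proof of Lemma~\ref{new formula} at $\sigma_{(b,w)}$ while explicitly accounting for the extra $\mathcal{O}_X$ factors. You also omit the trivial case $c(E)=0$, where $E\cong\mathcal{O}_X^{\oplus r(E)}$; the paper dispatches it in one line.
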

\begin{proof}
	
	We first claim that the structure sheaf $\mathcal{O}_X$ is $\sigma_{(b,w)}$-stable where $k(b,w)$ is sufficiently close to the point $pr\big(v(\mathcal{O}_X)\big) = o'$. By Lemma \ref{phase one}, $\mathcal{O}_X$ is $\sigma_{(0,w)}$-stable where $k(0,w)$ is on the line segment $(\overline{oo'})$, i.e. $w > \sqrt{2/H^2}$.
	Moreover, Lemma \ref{q.3} implies that there is no wall for $\mathcal{O}_X$ passing the line with equation $y=nx$ for any $n \in \mathbb{Z}$, because for any stability condition $\sigma_{(b,w)}$ on such a line, we have $b=\frac{1}{n}$ and  
	\begin{equation*}
	\abs{ \text{Im}[Z_{(b,w)}(\mathcal{O}_X)]} = \dfrac{1}{n} = \min \left\{ \abs{ \text{Im}[Z_{(b,w)}(r,cH,s)]} = \abs{c - \dfrac{r}{n}} \neq 0         \right\}.
	\end{equation*}
	If the object $E$ satisfies c$(E) = 0$, then the projection $pr(v(E))$ lies on the $y$-axis. Remark \ref{plane of having the same phase} implies that $\mathcal{O}_X$ cannot have the same phase as $E$ with respect to $\sigma_{(b_0,w_0)}$ with $b_0 <0$ unless $pr(v(E)) = pr(v(\mathcal{O}_X))$, i.e. $v(E) = kv(\mathcal{O}_X)$. Thus the uniqueness of spherical sheaf with Mukai vector $(1,0,1)$ (see e.g. \cite[Corollary 3.5]{mukai:modili-of-bundles-on-k3-surfaces}) implies that $E$ is the direct sum of $k$-copies of $\mathcal{O}_X$, hence the inequality \eqref{bound for h.in} holds. Thus we may assume c$(E) \neq 0$. Let $L_E$ be the line through $o'$ which passes the point $pr(v(E))$ if s$(E) \neq 0$, or it has slope $\text{rk}(E)/\text{c}(E)$ if s$(E) = 0$. By assumption, the point $k(b_0,w_0)$ is on the line $L_E$. 
	Since we assumed $b_0 <0$, the structure sheaf $\mathcal{O}_X$ is in the heart $\mathcal{A}(b_0)$. This implies $E \in \mathcal{A}(b_0)$ because $E$ is $\sigma_{(b_0,w_0)}$-semistable of the same phase as $\mathcal{O}_X$. When we deform the stability condition $\sigma_{(b_0,w_0)}$ along the line $L_E$ towards the point $o'$, the object $E$ remains in the corresponding heart, by Remark \ref{deforming the stability condition along the wall}. Hence $\lim\limits_{b \rightarrow 0^{-}} \text{Im}[Z_{(b,w)}(E)] \geq 0 $ which implies c$(E) > 0$.  

	We may assume dim $\text{Hom}_X(\mathcal{O}_X, E) \neq 0$, otherwise there is nothing to prove. Consider the evaluation map $\text{ev} \colon \text{Hom}_X(\mathcal{O}_X,E) \otimes \mathcal{O}_X \rightarrow E$.
	As we saw above, the structure sheaf $\mathcal{O}_X$ 
	is $\sigma_{(b_0,w_0)}$-stable, so it is a simple object in the abelian category of semistable objects with the same phase as $\mathcal{O}_X$. Therefore, the morphism ev is injective and the cokernel cok$(\text{ev})$ is also $\sigma_{(b_0,w_0)}$-semistable. 
	Let $\{E_i\}_{i=1}^{i=n}$ be the Jordan-H$\ddot{\text{o}}$lder factors of cok$(\text{ev})$ with respect to the stability condition $\sigma_{(b_0,w_0)}$. By Remark \ref{plane of having the same phase}, the Mukai vector of any factor can be written as $v(E_i)= m_iv(\mathcal{O}_X)+t_iv(E)$ for some $m_i, t_i \in \mathbb{R}$. Note that $\sum_{i=1}^{n}v(E_i) = v(E) - h^0(E)\,v(\mathcal{O}_X)$. \par
	If we deform the stability condition $\sigma_{(b_0,w_0)}$ along the line $L_E$ towards the point $o'$, Remark \ref{deforming the stability condition along the wall} shows that the objects $E_i$ remain stable and of the same phase as $E$ and $\mathcal{O}_X$. Thus, in particular, they remain in the heart and 
	\begin{equation*}
	\lim\limits_{k(b,w) \rightarrow (0^{-},1)} \text{Im}[Z_{(b,w)}(E_i)]= \lim\limits_{b \rightarrow 0^{-}}  [t_i \big(\text{c}(E) -b\,\text{rk}(E)\big) +m_i(-b)] \geq 0.
	\end{equation*}
	This gives $t_i \geq 0$. We have $\sum_{i=1}^{n}c(E_i) = \sum_{i=1}^{n}t_ic(E) = \text{c}(E)$, therefore 
	\begin{equation}\label{sum}
	\sum_{i=1}^{n} t_i =1
	\end{equation}
	If $t_i = 0$, then since $v(E_i)^2 \geq -2$, we have $m_i=1$ so the uniqueness of spherical sheaf again implies $E_i \cong \mathcal{O}_X$. 
	We have c$(E_i) = t_i\text{c}(E) \in \mathbb{Z}$. Combing this with \eqref{sum} proves that the maximum number of factors with $t_i \neq 0$ is equal to $c(E)$. 
	By reordering of the factors, we can assume $E_i \cong \mathcal{O}_X$ for $1 \leq i \leq i_0$ and the other factors satisfy $t_i \neq 0$. Therefore, 
	\begin{equation*}
	v(E)-\big(h^0(X,E)+i_0\big)v(\mathcal{O}_X) = \sum_{i=i_0+1}^{n} w_i
	\end{equation*} 
	where $0 \leq n-i_0 \leq k$. Since $\langle w_i, w_j \rangle \geq -2$ for $1 \leq  i,j \leq n$, the same argument as in Lemma \ref{new formula} implies that 
	\begin{equation*}
	\big(v(E)-(h^0(X,E)+i_0)v(\mathcal{O}_X) \big)^2 = \bigg(\sum_{i=i_0+1}^{n} w_i \bigg)^2 \geq -2\text{c}(E)^2.
	\end{equation*}
	Now solving the quadratic equation 
	\begin{equation*}
	f(x) = \big( v(E)-xv(\mathcal{O}_X) \big)^2 + 2k^2 = -2x^2 + 2x\, \chi(E) +v(E)^2+2\text{c}(E)^2 =0
	\end{equation*}
	shows that
	\begin{equation}\label{better bound in lemm}
	h^0(X,E)\leq h^0(X,E) +i_0 \leq \dfrac{\chi(E)}{2} + \dfrac{\sqrt{ \big(\text{rk}(E)-\text{s}(E)\big)^2 +\left(2H^2+4\right)\text{c}(E)^2 }}{2}.
	\end{equation}
\end{proof}

\begin{Def}
	Given a stability condition $\sigma_{(b,w)}$ and an object $E \in \mathcal{A}(b)$, the Harder-Narasimhan polygon of $E$ 
	is the convex hull of the points $Z_{(b,w)}(E')$ for all subobjects $E '\subset E$ of $E$. 
\end{Def}
If the Harder-Narasimhan filtration of $E$ is the sequence
\begin{equation*}
0 = \tilde{E}_0 \subset \tilde{E}_1 \subset .... \subset \tilde{E}_{n-1} \subset \tilde{E}_n =E,
\end{equation*}
then the points $\left\{ p_i = Z_{(b,w)}(\tilde{E}_i) \right\}_{i=0}^{i=n}$ are the extremal points of the Harder-Narasimhan polygon of $E$ on the left side of the line segment $\overline{oZ_{(b,w)}(E)}$, see Figure \ref{polygon figure.1}. 


\begin{figure} [h]
	\begin{centering}
		\definecolor{zzttqq}{rgb}{0.27,0.27,0.27}
		\definecolor{qqqqff}{rgb}{0.33,0.33,0.33}
		\definecolor{uququq}{rgb}{0.25,0.25,0.25}
		\definecolor{xdxdff}{rgb}{0.66,0.66,0.66}
		
		\begin{tikzpicture}[line cap=round,line join=round,>=triangle 45,x=1.0cm,y=1.0cm]
		
		\draw[->,color=black] (-2.3,0) -- (2.3,0);
		
		\filldraw[fill=gray!40!white, draw=white] (0,0) --(-1.3,.4)--(-1.7,1)--(-1.3,1.8)--(.5,2.5)--(2,2.5)--(2,0);

		\draw [ color=black] (0,0)--(-1.3,.4);
		\draw [color=black] (-1.3,.4)--(-1.7,1);
		\draw [color=black] (-1.7,1)--(-1.3,1.8);
		\draw [color=black] (-1.3,1.8)--(.5,2.5);
		\draw [color=black] (2,2.5) -- (.5,2.5);
		\draw [color=black, dashed] (0,0) -- (.5,2.5);
		
		\draw[->,color=black] (0,0) -- (0,3.3);

		\draw (2.3,0) node [right] {Re$[Z_{(b,w)}(-)]$};
		\draw (0,3.3) node [above] {Im$[Z_{(b,w)}(-)]$};
		\draw (0,0) node [below] {$o$};
		\draw (-1.3,.4) node [left] {$p_1$};
		\draw (-1.7,1) node [left] {$p_2$};
		\draw (-1.3,1.8) node [above] {$p_3$};
		\draw (0,2.8) node [right] {$p_4 = Z_{(b,w)}(E)$};

		\begin{scriptsize}
		
		\fill [color=black] (-1.3,.4) circle (1.1pt);
		\fill [color=black] (-1.7,1) circle (1.1pt);
		\fill [color=black] (-1.3,1.8) circle (1.1pt);
		\fill [color=black] (.5,2.5) circle (1.1pt);
		\fill [color=uququq] (0,0) circle (1.1pt);
		
		\end{scriptsize}
		
		\end{tikzpicture}
		
		\caption{The HN polygon is in the grey area.} 
		
		\label{polygon figure.1}
		
	\end{centering}
	
\end{figure}
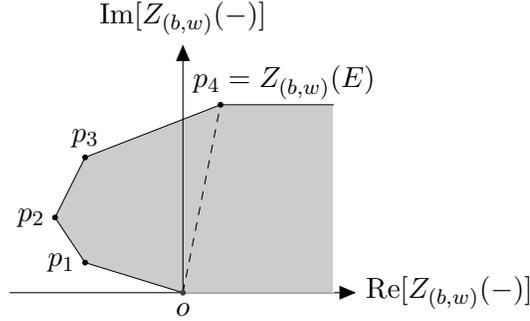

We define the following non-standard norm on $\mathbb{C}$: 
\begin{equation}\label{definition of norm}
\lVert x+iy \rVert = \sqrt{x^2 + (2H^2+4)y^2}.
\end{equation} 
For two points $p$ and $q$ on the complex plane, the length of the line segment $\overline{pq}$ induced by the above norm is denoted by $\lVert \overline{pq} \rVert$. The function $\overline{Z} \colon K(X) \rightarrow \mathbb{C}$ is defined as $$\overline{Z}(E) = Z_{\left(0,\sqrt{2/H^2}\right)}(E) =  \text{rk}(E)-\text{s}(E) \,+\, i\,\text{c}(E).$$ The next proposition shows that we can bound the number of global sections of an object in $\mathcal{A}(0)$ via the length of the Harder-Narasimhan polygon at some limit point.  
\begin{Prop}\label{polygon}
	Consider an object $E \in \mathcal{A}(0)$ which has no subobject $F \subset E$ in $\mathcal{A}(0)$ with ch$_1(F) = 0$. 
	\begin{itemize*}
		\item[(a)] There exists $w^* > \sqrt{2/H^2}$ such that the Harder-Narasimhan filtration of $E$ is a fixed sequence
		\begin{equation*}\label{HN}
		0 = \tilde{E}_{0} \subset \tilde{E}_{1} \subset .... \subset \tilde{E}_{n-1} \subset  \tilde{E}_n=E,
		\end{equation*}
		for all stability conditions $\sigma_{(0,w)}$ where $\sqrt{2/H^2}< w < w^*$.
		\item[(b)] Let $p_i \coloneqq  \overline{Z}(\tilde{E_i})$ for $0 \leq i \leq n$, then
		\begin{equation*}
		h^0(X,E) \leq \dfrac{\chi(E)}{2}  + \dfrac{1}{2} \sum_{i=1}^{n} \lVert \overline{p_ip_{i-1}} \rVert.
		\end{equation*} 
	\end{itemize*} 	
\end{Prop}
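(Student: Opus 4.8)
The plan is to reduce Proposition \ref{polygon}(b) to the Brill-Noether wall bound of Lemma \ref{bound for h} applied to each Harder-Narasimhan factor, and then to assemble these local bounds into the claimed sum over the edges of the polygon $P_E$. First, for part (a), I would invoke the local finiteness of walls from Proposition \ref{line wall}: the object $E$ has only finitely many walls meeting any neighbourhood of the point $\gamma = pr\big(v(\mathcal{O}_X)\big) = (0,1)$ along the ray $\{b=0\}$. Since the stability condition $\sigma_{(0,w)}$ corresponds to a point $k(0,w)=(0,2/(H^2w^2))$ moving along the $y$-axis as $w$ varies, there is an interval $(\sqrt{2/H^2},w^*)$ free of walls for $E$, on which the Harder-Narasimhan filtration is constant. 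This gives the fixed sequence $\tilde{E}_0\subset\cdots\subset\tilde{E}_n=E$.

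For part (b), the key observation is that each HN factor $E_i=\tilde{E}_i/\tilde{E}_{i-1}$ is $\sigma_{(0,w)}$-semistable, and because $\phi^+_{(0,w)}(E)<1$, each factor has phase strictly less than one, so Lemma \ref{new formula} and — after a small deformation of $b$ to the left — Lemma \ref{bound for h} apply to $E_i$. The plan is to bound $h^0(X,E)$ by the sum $\sum_i h^0(X,E_i)$, which holds because $\mathcal{O}_X$ is $\sigma_{(b,w)}$-stable (established in the proof of Lemma \ref{bound for h}) and taking $\mathrm{Hom}(\mathcal{O}_X,-)$ is left-exact along the filtration. Applying inequality \eqref{bound for h.in} to each $E_i$ then yields
\begin{equation*}
h^0(X,E)\leq \sum_{i=1}^{n}\left(\frac{\chi(E_i)}{2}+\frac{1}{2}\sqrt{\big(r(E_i)-s(E_i)\big)^2+c(E_i)^2(2H^2+4)}\right).
\end{equation*}
Since $\chi$ is additive, $\sum_i\chi(E_i)=\chi(E)$, which produces the first term $\chi(E)/2$.

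The remaining task is to identify each square-root term with the length $\lVert\overline{p_ip_{i-1}}\rVert$ of an edge of $P_E$. Here I would use that $\overline{p_ip_{i-1}}=\overline{Z}(E_i)=\big(r(E_i)-s(E_i)\big)+i\,c(E_i)$, so that under the non-standard norm $\lVert x+iy\rVert=\sqrt{x^2+(2H^2+4)y^2}$ one has exactly
\begin{equation*}
\lVert\overline{p_ip_{i-1}}\rVert=\sqrt{\big(r(E_i)-s(E_i)\big)^2+c(E_i)^2(2H^2+4)},
\end{equation*}
matching each square root term and giving the sum $\tfrac{1}{2}\sum_i\lVert\overline{p_ip_{i-1}}\rVert$. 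The main obstacle I anticipate is the precise justification that Lemma \ref{bound for h} may be applied to each factor $E_i$ even though that lemma is stated for an object of the \emph{same phase} as $\mathcal{O}_X$: one must deform $b$ slightly negative while preserving semistability of $E_i$ and ensuring $E_i$ and $\mathcal{O}_X$ become cophasal, which requires controlling the walls of $E_i$ near $(0,1)$ and verifying that the limit argument of Lemma \ref{bound for h} (sending $k(b,w)\to(0^-,1)$) is compatible with the fixed filtration from part (a). Managing this limiting coincidence of phases uniformly across all factors is the delicate point.
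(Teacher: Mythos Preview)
Your strategy for part (b) is essentially the paper's: bound $h^0(X,E)\le\sum_i h^0(X,E_i)$ via the filtration, apply Lemma~\ref{bound for h} to each factor, and recognise the resulting square root as $\lVert\overline{p_ip_{i-1}}\rVert$. You have also correctly isolated the genuine difficulty, namely arranging that each $E_i$ is semistable and cophasal with $\mathcal{O}_X$ at some stability condition near $(0,1)$. The paper resolves this concretely: since $c(E_i)\neq 0$, Lemma~\ref{q.3} shows $E_i$ has no wall along the segment $\overline{oo'}$, and Lemma~\ref{no spherical} provides a rectangle around $o'=(0,1)$ containing no root projections other than $o'$ itself. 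One then slides the stability condition inside the cone (or strip, when $s(E_i)=0$) through $pr(v(E_i))$ bounded by the rays to $o'$ and to $o^*=k(0,w^*)$; inside this cone intersected with the rectangle $E_i$ stays semistable, and on its upper boundary one finds, by Corollary~\ref{cor}, a stability condition $\sigma_i$ with $b_i<0$ close to $0$ at which $E_i$ and $\mathcal{O}_X$ have the same phase. Your proposal lacks this construction, but you anticipated that something of this kind is needed.

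There is, however, a genuine gap in your argument for part (a). You invoke local finiteness of walls from Proposition~\ref{line wall} near the point $(0,1)$, but $(0,1)=pr(v(\mathcal{O}_X))$ is \emph{not} in $V(X)$: it is one of the removed points $I_\delta$. Local finiteness only guarantees finitely many walls meeting a compact subset of $V(X)$, and a punctured neighbourhood of $(0,1)$ along the $y$-axis is not such a set; walls for $E$ could in principle accumulate at this boundary point, so the Harder--Narasimhan filtration need not stabilise as $w\to\sqrt{2/H^2}^+$ by this reasoning alone. The paper circumvents this by a direct finiteness argument on Mukai vectors: for $\sqrt{2/H^2}<w<\sqrt{4/H^2}$, Lemma~\ref{new formula} bounds $r_1s_1$ in terms of $c(E)$, and the phase inequality $\phi_{(0,w)}(v_1)\ge\phi_{(0,w)}(v(E))$ bounds $\max\{|r_1|,|s_1|\}$ when $r_1s_1\le 0$. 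This leaves only finitely many candidate classes for $\tilde{E}_1$, forcing it to be constant on some interval $(\sqrt{2/H^2},w_1)$; one then inducts on the quotient $E/\tilde{E}_1$, noting that the length of the filtration is at most $c(E)$ since each $c(E_i)>0$.
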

\begin{proof}
	We first show that there exists $ w_1 > \sqrt{2/H^2}$ such that the semistable factor $\tilde{E}_1$ is fixed for the stability conditions of form $\sigma_{(0,w)}$ where $\sqrt{2/H^2} < w < w_1$. Let $\sigma_{(0,w)}$ be a stability condition such that $\sqrt{2/H^2} < w < \sqrt{4/H^2} \coloneqq w_0$ and $v_1 = (r_1,c_1H,s_1)$ be a possible class of the semistable factor $\tilde{E}_1$. We have $0 < \text{Im}[Z_{(0,w)}(\tilde{E}_1)] = c_1 < \text{Im}[Z_{(0,w)}(E)] = c(E)$. Lemma \ref{new formula} implies that 
	\begin{equation*}\label{first.11}
	r_1s_1 \leq c_1^2 \bigg( \dfrac{H^2}{2}+1 \bigg) \leq \text{c}(E)^2 \bigg( \dfrac{H^2}{2}+1 \bigg).
	\end{equation*}
	Hence, if $r_1s_1 >0$, there are only finitely many possibilities for the class $v_1$. 
	If $r_1 \geq 0$ and $s_1 \leq 0$, then since $\phi_{(0,w)}(v(E)) \leq \phi_{(0,w)}(v_1)$, we have
	\begin{equation*}\label{first.12}
	\max\{r_1,-s_1\} \leq  \frac{r_1H^2w^2}{2} -s_1 = \text{Re}[Z_{(0,w)}(v_1)] \leq \max\, \{\text{Re} [Z_{(0,w)}\big(v(E)\big)] , 0 \},
	\end{equation*}  
	and if $r_1 \leq 0 $ and $s_1 \geq 0$, the existence of HN filtration for $E$ at $\sigma_{(0,w_0)}$ implies that there exists a real number $M_0$ such that 
	\begin{equation*}\label{first.13}
	M_0 \leq \text{Re}[Z_{(0,w_0)}(v_1)] = \frac{r_1H^2w_0^2}{2} -s_1 \leq r_1-s_1.
	\end{equation*} 
	Thus in any case, there are only finitely many possibilities for $v_1$. Note that the heart $\mathcal{A}(0)$ for the stability conditions $\sigma_{(0,w)}$ is fixed and does not depend on $w$. Moreover, the ordering of the phase function $\phi_{(0,w)}$ is the same as the ordering given by the linear function $-\frac{\text{Re}[Z_{(0,w)}(-)]}{\text{Im}[Z_{(0,w)}(-)]}$. Let $\tilde{E}_1$ be the semistable subobject of $E$ of maximum phase in the HN filtration of $E$ with respect to $\sigma_{(0,w)}$ where $w=w_0$. When we decrease $w$, the subobject $\tilde{E}_1$ in the HN filtration changes if another subobject of $E$ gets bigger phase. Since there are only finitely many possibilities for the class of these subobjects which achieve the maximum phase, there is $w_1 > \sqrt{2/H^2}$ such that the subobject $\tilde{E}_1$ is fixed in the HN filtration of $E$ with respect to $\sigma_{(0,w)}$ when $w \in \big(\sqrt{2/H^2}, w_1\big)$.  \par
	Continuing this argument by induction, one shows that there is a number $ w_{i} $ such that $\sqrt{2/H^2} < w_i < w_{i-1}$ and the semistable factor $E_i = \tilde{E}_i/\tilde{E}_{i-1}$ which is the semistable subobject of $E/\tilde{E}_{i-1}$ with the maximum phase, is fixed for the stability conditions $\sigma_{(0,w)}$ where $ \sqrt{2/H^2} < w < w_i$. Note that $0 < \text{Im}[Z_{(0,w)} (E_i)] < \text{c}(E)$, so the length of the HN filtration of $E$ is at most $\text{c}(E)$. This completes the proof of $(a)$.\par
	Since $\text{c}(E_i) \neq 0$ for $1 \leq i \leq n$, the point $pr(v(E_i))$ is not on the $y$-axis. Proposition \ref{line wall}, part $(d)$ implies that the line segment $\overline{oo'}$ is not a wall for the semistable factor $E_i$. Moreover, since $E_i$ is $\sigma_{(0,w)}$-semistable for $\sqrt{2/H^2}< w <w^*$, there is no wall for $E_i$ which passes the line segment $\overline{o'o^*}$ where $o^* = k(0,w^*)$. In other words, these stability conditions are all inside one chamber for $E_i$. If $\text{s}(E_i) \neq 0$, we define $V_i$ as a cone in $\mathbb{R}^2$ with two rays $\overline{pr_io'}$ and $\overline{pr_io^*}$ where $pr_i  \coloneqq pr(v(E_i))$ and if $\text{s}(E_i) = 0$, then $V_i$ is defined as the area between two parallel lines of slope $\text{rk}(E_i)/\text{c}(E_i)$ which pass through the points $o'$ and $o^*$, see Figure \ref{HN}.\par
	Lemma \ref{no spherical} implies that there is a small rectangle $a_1a_2a_3a_4$ around the point $o'=(0,1)$ such that there is no projection of roots other than $pr(v(\mathcal{O}_X))$ inside it.
	Let $V'_i$ be the intersection of $V_i$ and the rectangle $a_1a_2a_3a_4$, see the dashed area in Figure \ref{HN}. The structure of the wall and chamber decomposition implies that $E_i$ is semistable with respect to the stability conditions in $V'_i$. In particular, it is $\sigma_i \coloneqq \sigma_{(\tilde{b}_i,\tilde{w}_i)}$-semistable where $\sigma_i$ is on the top boundary of $V'_i$, i.e. the associated point $k(\tilde{b}_i,\tilde{w}_i)$ is on the top boundary of $V'_i$. In the figures, by abuse of notation, we denote the point $k(\tilde{b}_i,\tilde{w}_i)$ by the corresponding stability condition $\sigma_i$.  \par
	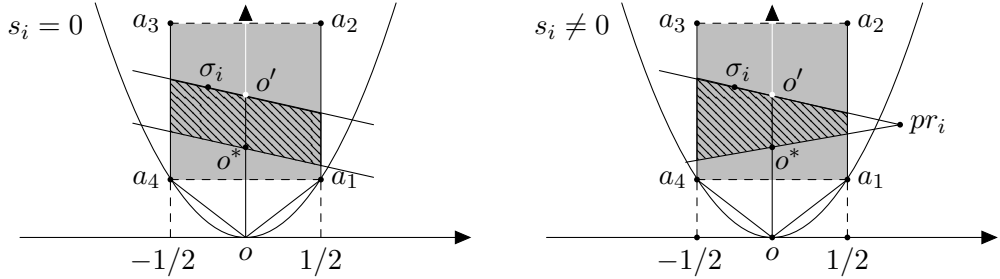
\begin{figure} [h]
		\begin{centering}
			
			\begin{tikzpicture}[line cap=round,line join=round,>=triangle 45,x=1.0cm,y=1.0cm]
			
			\filldraw[fill=gray!50!white, draw=white] (.7,.8)--(.7,2.85)--(-.7,2.85)--(-.7,.8);
			
			\filldraw[fill=gray!50!white, draw=white] (-6.3,.8)--(-6.3,2.85)--(-7.7,2.85)--(-7.7,.8);

			\draw[->,color=black] (-10,0) -- (-4,0);
			\draw[->,color=black] (-3,0) -- (3,0);

			\draw [] (0,0) parabola (2,3.12); 
			\draw [] (0,0) parabola (-2,3.12); 
			\draw [] (-7,0) parabola (-9,3.12); 
			\draw [] (-7,0) parabola (-5,3.12);

			\draw [color=black, dashed] (-.7,.8)--(-.7,2.85);
			\draw [color=black, dashed] (.7,.8)--(.7,2.85);
			
			\draw [color=black, dashed] (-7.7,.8)--(-7.7,2.85);
			\draw [color=black, dashed] (-6.3,.8)--(-6.3,2.85);

			\draw (-3.3,2.8) node [right] {$s_i \neq 0$};
			\draw (-10.3,2.8) node [right] {$s_i = 0$};

			\draw (.63,.9) node [right] {$a_1$};
			\draw (.7,2.85) node [right] {$a_2$};
			\draw (-.65,.9) node [left] {$a_4$};
			\draw (-.7,2.85) node [left] {$a_3$};
			
			\draw (-6.33,.9) node [right] {$a_1$};
			\draw (-6.3,2.85) node [right] {$a_2$};
			\draw (-7.65,.9) node [left] {$a_4$};
			\draw (-7.7,2.85) node [left] {$a_3$};

			\draw [color=black] (0,0)--(0,1.9);
			\draw [color=white] (0,1.9)--(0,2.85);
			\draw[dashed, ->,color=black] (0,2.9) -- (0,3.1);
			
			\draw [dashed] (-.7,.8)--(.7,.8); 
			\draw [dashed] (-.7,2.85)--(.7,2.85);
			
			\draw [color=black] (-7,0)--(-7,1.9);
			\draw [color=white] (-7,1.9)--(-7,2.85);
			\draw[dashed, ->,color=black] (-7,2.9) -- (-7,3.1);
			\draw [dashed] (-7.7,.8)--(-6.3,.8); 
			\draw [dashed] (-7.7,2.85)--(-6.3,2.85);

			\draw (0,2.1) node [right] {$o'$};
			\draw (-.1,1.05) node [right] {$o^*$};
			\draw (1.7,1.5) node [right] {$pr_i$};
			\draw [color=black] (1.7,1.5)--(-1.5,2.22);
			\draw [color=black] (1.7,1.5)--(-1.15,1);
			\draw (0,0) node [below] {$o$};
			\draw (-.45,1.95) node [above] {$\sigma_i$};
			
			\draw[pattern=north west lines, pattern color=black, thin] (-.7 ,1.1)--(-.7,2.04)--(.7,1.73)--(.7,1.35);
			
			\draw (-7,2.1) node [right] {$o'$};
			\draw (-6.9,1.05) node [left] {$o^*$};
			\draw [color=black] (-5.3,1.5)--(-8.5,2.22);
			\draw [color=black] (-5.3,.8)--(-8.5,1.52);
			\draw (-7,0) node [below] {$o$};
			\draw (-7.45,1.95) node [above] {$\sigma_i$};

			\draw[pattern=north west lines, pattern color=black, thin] (-7.7 ,1.35)--(-7.7,2.05)--(-6.3,1.7)--(-6.3,1.05);

			\begin{scriptsize}
			
			\fill [color=white] (0,1.9) circle (1.1pt);
			\fill [color=black] (.7,.8) circle (1.1pt);
			\fill [color=black] (-.7,.8) circle (1.1pt);
			\fill [color=black] (0,0) circle (1.1pt);
			
			\fill [color=black] (0,1.2) circle (1.1pt);
			
			
			\fill [color=black] (-.7 ,2.85) circle (1.1pt);
			\fill [color=black] (.7,2.85) circle (1.1pt);

			\fill [color=black] (1.7,1.5) circle (1.1pt);
			\fill [color=black] (-.5,2) circle (1.1pt);

			\fill [color=black] (-7.7 ,2.85) circle (1.1pt);
			\fill [color=black] (-6.3,2.85) circle (1.1pt);
			\fill [color=black] (-7.7 ,.8) circle (1.1pt);
			\fill [color=black] (-6.3,.8) circle (1.1pt);

			\fill [color=white] (-7,1.9) circle (1.1pt);
			\fill [color=black] (-7,1.2) circle (1.1pt);
			\fill [color=black] (-7.5,2) circle (1.1pt);
			

			\end{scriptsize}
			
			\end{tikzpicture}
			
			\caption{The object $E_i$ remains semistable when we go to $\sigma_i$}
			
			\label{HN}
			
		\end{centering}
		
	\end{figure}
	
	We may assume $-1 \ll \tilde{b}_i <0$. Since $E_i \in \mathcal{A}(0)$ is of phase less than one, it remains in the heart $\mathcal{A}(\tilde{b}_i)$. By Remark \ref{plane of having the same phase}, the objects $E_i$ and $\mathcal{O}_X$ have the same phase with respect to $\sigma_i$. Hence Lemma \ref{bound for h} gives 
	\begin{align*}
	h^0(X,E_i) \leq \dfrac{\text{rk}(E_i)+\text{s}(E_i) + \sqrt{\big(\text{rk}(E_i)-\text{s}(E_i)\big)^2 + \text{c}(E_i)^2(2H^2+4)}}{2}= \;\;\;\;\;\;\;\;\;\;\;\;\;\\
	\;\;\;\;\dfrac{\text{rk}(E_i)+\text{s}(E_i)}{2}\; +\; \dfrac{\lVert \overline{p_ip_{i-1}} \rVert}{2}.\;\;\;
	\end{align*}
	Thus,
	\begin{equation*}
	h^0(X,E) \leq \sum_{i=1}^{n} h^0(X,E_i) \leq \dfrac{\chi(E)}{2} + \dfrac{1}{2} \sum_{i=1}^{n} \lVert \overline{p_ip_{i-1}}\rVert.  
	\end{equation*} 
\end{proof}
We denote by $P_E$ the convex hull of the points $\{p_0, p_1, ..., p_n\}$ as defined in Proposition \ref{polygon}, part $(b)$. We think of $P_E$ as the Harder-Narasimhan polygon of $E$ on the left at the limit point. We finish this section by stating two useful inequalities which are the result of deformation
of stability conditions.
\begin{Lem}\label{bound for slope}
	Consider a stability condition $\sigma_{(b_0,w_0)}$ and a $\sigma_{(b_0,w_0)}$-semistable object $E \in \mathcal{A}(b_0)$ with $\text{c}(E) \neq 0$. Let $L$ be a line through the point $pr(v(E))$ if $\text{s}(E) \neq 0$ or it has a slope of $\text{rk}(E)/\text{c}(E)$ if $\text{s}(E) =0$. Let $q_1 = (x_1,y_1)$ and $q_2 = (x_2,y_2)$ be two points on the line $L$ where $y_1y_2 \neq 0$ and $x_1/y_1 \leq x_2/y_2$. Suppose the point $k(b_0,w_0)$ is on the open line segment $(\overline{q_1q_2})$. If every point on the open line segment $(\overline{q_1q_2})$ is in correspondence to a stability condition, i.e., if $(\overline{q_1q_2})\subset V(X)$, then   	 
	\begin{equation}\label{points}
	\mu_H^{+}\big(H^{-1}(E)\big) \leq \dfrac{x_1}{y_1} \;\;\;\;\;  \text{and} \;\;\;\;\; 	  \dfrac{x_2}{y_2} \leq \mu^{-}_H\big(H^{0}(E)\big).
	\end{equation}
\end{Lem}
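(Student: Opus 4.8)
The plan is to deform the stability condition along the line $L$ inside $V(X)$, keep $E$ semistable throughout, and then read off the two slope bounds from the behaviour at the endpoints $q_1,q_2$. First I would fix coordinates. Since $k(b,w)=(x,y)$ satisfies $x/y=b$, a point of $L$ has $x$-to-$y$ ratio equal to the $b$-value of the stability condition passing through it. Parametrising $(\overline{q_1q_2})$ linearly, the derivative of $x/y$ has the constant sign of the cross term $x_2y_1-x_1y_2$, so $b$ moves \emph{monotonically} from $b_1\coloneqq x_1/y_1$ to $b_2\coloneqq x_2/y_2$ as $k(b,w)$ traverses the segment; combined with $x_1/y_1\le x_2/y_2$ and $k(b_0,w_0)\in(\overline{q_1q_2})$ this gives $b_1< b_0< b_2$ (the case $b_1=b_2$ means $L$ passes through the origin and reduces the statement to $E\in\mathcal{A}(b_0)$). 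In these terms the two claimed inequalities read $\mu_H^{+}\big(H^{-1}(E)\big)\le b_1$ and $b_2\le\mu_H^{-}\big(H^{0}(E)\big)$.

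Next I would show that $E$ is $\sigma_{(b,w)}$-semistable for \emph{every} $k(b,w)\in(\overline{q_1q_2})$. By hypothesis $L$ is precisely the line carrying the walls of $E$ in the sense of Proposition \ref{line wall}(d), and $(\overline{q_1q_2})$ is a connected subset of $L\cap V(X)$, hence is contained in a single connected component $\mathcal{C}$ of $L\cap V(X)$. Since each wall of $E$ is an \emph{entire} connected component of $L\cap V(X)$, either $\mathcal{C}$ is such a wall (and $E$ is strictly $\sigma_{(b,w)}$-semistable on all of $\mathcal{C}$ by part (b)) or $\mathcal{C}$ meets no wall of $E$ at all (and the semistability of $E$ is constant on $\mathcal{C}$ by part (a)). As $E$ is $\sigma_{(b_0,w_0)}$-semistable with $k(b_0,w_0)\in\mathcal{C}$, in either case $E$ is $\sigma_{(b,w)}$-semistable along the whole segment.

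Now I would pin $E$ inside the heart. The object $E\in\mathcal{A}(b_0)$ has cohomology concentrated in degrees $-1,0$, and a shift $E[k]$ has cohomology in degrees $-1-k,-k$. When both $H^{-1}(E)$ and $H^{0}(E)$ are non-zero, the only shift with cohomology inside $\{-1,0\}$ is $E$ itself, so the shift witnessing $\sigma_{(b,w)}$-semistability must be $E$; therefore $E\in\mathcal{A}(b)$ for all $b\in(b_1,b_2)$. By the definition of the tilt this means $H^{-1}(E)\in\mathcal{F}^{b}$ and $H^{0}(E)\in\mathcal{T}^{b}$, that is $\mu_H^{+}\big(H^{-1}(E)\big)\le b<\mu_H^{-}\big(H^{0}(E)\big)$ for every such $b$. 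Letting $b\to b_1^{+}$ and $b\to b_2^{-}$ yields exactly the two desired inequalities.

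It remains to treat the degenerate cases and this is where the main obstacle lies. If $H^{-1}(E)=0$ (or $H^0(E)=0$) the corresponding inequality is vacuous, since then $\mu_H^{+}(0)=-\infty$ (resp. $\mu_H^{-}(0)=+\infty$); but the surviving inequality is genuinely subtle, because now a \emph{shift} of $E$ could in principle enter $\mathcal{A}(b)$ as $b$ crosses the slope of the extremal Harder--Narasimhan factor of the non-zero cohomology sheaf. The point is that such a crossing $b=\mu_H$ occurs on $L$ only at $pr$ of that factor: since $x/y=\mu_H$ cuts $L$ in the single point $pr(v(E))$ when $s(E)\neq 0$, and this point lies outside $V(X)$ — it sits on or below the parabola when the Mukai vector has square $\ge 0$, and is a removed root when the square is $-2$ (Mukai vectors have even square, so no value in $(-2,0)$ can occur) — the transition can never happen on $(\overline{q_1q_2})\subset V(X)$. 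Hence $E\in\mathcal{A}(b)$ persists across the whole segment in these cases as well, and the argument of the previous paragraph applies. Ruling out this escape of $E$ (or a shift of it) from the heart $\mathcal{A}(b)$ is precisely what forces the cohomological slopes to be pinned by the endpoints $b_1$ and $b_2$.
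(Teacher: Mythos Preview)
Your approach is exactly the paper's: deform along $L$ inside $V(X)$, observe that $E$ stays semistable and in the heart $\mathcal{A}(b)$ throughout, deduce $\mu_H^{+}(H^{-1}(E))\le b<\mu_H^{-}(H^{0}(E))$ for every such $b$, and let $b\to b_1^{+},\,b_2^{-}$. The paper's proof is two sentences: it simply asserts heart membership along the segment as part of ``the construction of the walls and Proposition~\ref{line wall}'', whereas you attempt to justify this step from scratch. Your main case (both cohomologies nonzero) is clean and correct.

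The degenerate case, however, has a genuine gap. When $H^{-1}(E)=0$ you locate the problematic transition at the slope of the \emph{extremal} HN factor of $E$, but then claim this meets $L$ at $pr(v(E))$; that identification is only valid when $E$ is $\mu_H$-semistable, i.e.\ when $\mu_H^{-}(E)=\mu_H(E)$. Moreover, your argument that $pr(v(E))\notin V(X)$ by casing on $v(E)^2$ omits $v(E)^2\le -4$, which does occur for semistable objects (e.g.\ $\mathcal{O}_X^{\oplus 2}$). Both issues are easily repaired. If $E$ is a sheaf that is \emph{not} $\mu_H$-semistable, then for $b\in[\mu_H^{-}(E),\mu_H^{+}(E))$ no shift of $E$ lies in $\mathcal{A}(b)$ at all; since semistability along the segment forces some shift to be in the heart at every $b\in(b_1,b_2)$, connectedness of $(b_1,b_2)$ together with $b_0<\mu_H^{-}(E)$ gives $(b_1,b_2)\subset(-\infty,\mu_H^{-}(E))$ directly. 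If $E$ \emph{is} $\mu_H$-semistable, the transition happens precisely at $b=\mu_H(E)$, and on $L$ this is the point $pr(v(E))$ (when $s(E)\ne 0$); there $Z_{(b,w)}(E)=0$, which is impossible for a nonzero object with a shift in the heart, so $pr(v(E))\notin(\overline{q_1q_2})$ regardless of the value of $v(E)^2$. Hence $\mu_H(E)\notin(b_1,b_2)$, and since $b_0<\mu_H(E)$ you conclude $\mu_H^{-}(E)=\mu_H(E)\ge b_2$.
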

\begin{proof}
	Remark \ref{deforming the stability condition along the wall} implies that $E$ is $\sigma_{(b,w)}$-semistable and it is in the heart $\mathcal{A}(b)$ whenever the point $k(b,w)$ is on the open line segment $(\overline{q_1q_2})$. Therefore,  
	\begin{equation*}
	\mu_H^{+}\big(H^{-1}(E)\big) \leq b < \mu^{-}_H\big(H^{0}(E)\big).
	\end{equation*}
	If $k(b_i,w_i) = q_i$ , then $b_i = x_i/y_i$. Thus the stability conditions close to the points $q_1$ or $q_2$ give the inequalities \eqref{points}. 
\end{proof}

\section{The Brill-Noether loci}\label{section.4}
In this  section, we first show that the morphism $\psi \colon M_{X,H}(v) \rightarrow \mathcal{BN}$ described in \eqref{function} is well-defined. Then we consider a slope semistable rank $r$-vector bundle $F$ on the curve $C$ of degree $2rs$ and describe the location of the wall that bounds the Gieseker chamber for the push-forward of $F$. Finally, in Proposition \ref{prop.2}, we show that if the number of global sections of $F$ is high enough, then it must be the restriction of a vector bundle on the surface.\par 
We assume throughout Section \ref{section.4} that $X$ is a K3 surface with Pic$(X) = \mathbb{Z}.H$ and $H^2 = 2rs$ for some $r \geq 2$ and $s \geq \max\{r,5\}$. We also assume $C$ is a curve in the linear system $|H|$ and $i \colon C \hookrightarrow X$ is the embedding of the curve $C$ into the surface $X$. The push-forward of a rank $r$-vector bundle $F$ on the curve $C$ of degree $2rs$ has Mukai vector $v(i_*F) = (0,rH,2rs-r^2s)$. 
Let $M_{X,H}(v)$ be the moduli space of $H$-Gieseker semistable sheaves on the surface $X$ with Mukai vector $v = (r,H,s)$. Since $v^2 =0$ and $v$ is primitive, the moduli space $M_{X,H}(v)$ is a smooth projective K3 surface \cite[Proposition 10.2.5 and Corollary 10.3.5]{huybretchts:lectures-on-k3-surfaces}. Any coherent sheaf $E \in M_{X,H}(v)$ is a $\mu_H$-stable locally free sheaf \cite[Remark 6.1.9]{huybrechts:geometry-of-moduli-space-of-sheaves}. Note that $E(-H)$ is also $\mu_H$-stable. Let $u \coloneqq v(i_*F) - v = v(E(-H)[1])$,
\begin{equation*}
p_v \coloneqq pr(v) = \bigg(\dfrac{1}{s} , \dfrac{r}{s} \bigg) \;\;\; \text{and} \;\;\;
p_u \coloneqq pr(u) = \bigg( \dfrac{-1}{s(r-1)} , \dfrac{r}{s(r-1)^2} \bigg).
\end{equation*} 
We also denote by $\tilde{o}$ the point at which the line segments $\overline{p_up_v}$ and $\overline{o'o}$ intersect, where $o' = pr(v(\mathcal{O}_X))$. 
Define the object $K_E \in \mathcal{D}(X)$ as the cone of the evaluation map:
\begin{equation}\label{cone}
\mathcal{O}_X^{h^0(X,E)} \xrightarrow{\text{ev}_E} E \rightarrow K_E.
\end{equation} 
Denote the point $\big(-1/r , s/r\big)$ by $q$. Lemma \ref{no spherical.2} for $m=r$, $n= s(r-1)$ and $\epsilon =1$ implies that there is no projection of roots in the grey area and on the open line segment $(\overline{et})$ in Figure \ref{no hole.5}, where $e = q_{-m,-n,-\epsilon}$ and $t=q'_{-m,-n,-\epsilon}$. As before, we denote by $\gamma_n$ the point on the parabola $y=rsx^2$ with the $x$-coordinate $1/n$.
\begin{figure} [h]
	\begin{centering}
		\definecolor{zzttqq}{rgb}{0.27,0.27,0.27}
		\definecolor{qqqqff}{rgb}{0.33,0.33,0.33}
		\definecolor{uququq}{rgb}{0.25,0.25,0.25}
		\definecolor{xdxdff}{rgb}{0.66,0.66,0.66}
		
		\begin{tikzpicture}[line cap=round,line join=round,>=triangle 45,x=1.0cm,y=1.0cm]
		
		\filldraw[fill=gray!40!white, draw=white] (0,0) --(-1.5,3/5)--(-1.5,1)--(-1.89,1.26)--(-4,4.88)--(-4,7.3)--(3,7.3)-- (3,12/5)--(0,0);
		
		\draw[->,color=black] (-5.7,0) -- (5.7,0);
		\draw[color=black] (0,0) -- (0,4);
		\draw[color=white] (0,4) -- (0,7.3);
		\draw[->,color=black] (0,7.3) -- (0,7.5);
		
		\draw [] (0,0) parabola (-5.2,7.14); 
		\draw [] (0,0) parabola (5.2,7.14);
		
		\draw [color=black] (0,0) --(-1.5,3/5);

		\draw [color=black] (-1.5,1)--(-1.5,3/5);
		\draw [color=black] (-1.5,1)--(-2.53,1.69);
		
		\draw [color=black] (-4,4.88)--(-4,7.3);
		\draw [color=black, dashed] (-4,4.88)--(-4,4.2);
		
		\draw [color=black, dashed] (0,0)--(-5,6.6);
		\draw [color=black, dashed] (3,12/5)--(-5,6.6);
		\draw [color=black] (3,12/5)--(0,0);
		\draw [color=black] (3,12/5)--(3,7.3);

		\draw [color=black, dashed] (3,12/5)--(-1.5,3/5);

		\draw [color=blue] (0,0)--(-3.7,4.35);
		\draw [color=blue] (0,0)--(-2.32,2);
		

		
		\draw[->,color=black] (-5.7,0) -- (5.7,0);

		\draw (0,7.5) node [above] {$y$};
		\draw  (5.7,0) node [right] {$x$};
		\draw (0,0) node [below] {$o$};
		\draw  (1.4,3.8) node [above] {$o' = pr\big(v(\mathcal{O}_X)\big)$};
		\draw  (3,12/5) node [right] {$p_v$};
		\draw  (-1.8,.8) node [below] {$p_u$};
		\draw  (-4.95,6.6) node [left] {$q$};
		
		\draw  (-3.85,4.55) node [right] {$t'$};
		\draw  (-2.2,1.95) node [above] {$e'$};
		\draw (-.9,4.4) node [above] {$\sigma_3$};
		\draw   (-.9,.27) node [above] {$\sigma_1$};
		\draw  (-1.8,2.6) node [above] {$\sigma_4$};
		\draw  (-4.45,4.2) node [below] {$\gamma_{-r-1}$}; 
		\draw  (-3.95,4.91) node [left] {$t$};
		\draw  (-1.83,1.175) node [above] {$e$};
		
		\draw   (-2.48,1.6) node [left] {$\gamma_{-s(r-1)+1}$};
		\draw   (0.15,1.2) node [above] {$\tilde{o}$};
		
		\draw [color=black] (-1.5,3/5)--(-1.89,1.26);
		\draw [color=gray] (-1.89,1.26)--(-4,4.88);
		\draw [color=black] (-4,4.88)--(-5,6.6);

		\begin{scriptsize}

		\fill [color=black] (0,0) circle (1.1pt);
		
		\fill [color=black] (-1.5,3/5) circle (1.1pt);

		\fill [color=black] (-5,6.6) circle (1.1pt);
		
		\fill [color=black] (-2.53,1.69) circle (1.1pt);
		\fill [color=black] (-1.89,1.26) circle (1.1pt);
		
		\fill [color=black] (-4,4.2) circle (1.1pt);
		\fill [color=black] (3,12/5) circle (1.1pt);
		
		\fill [color=white] (0,4) circle (1.7pt);
		
		\fill [color=black] (-4,4.88) circle (1.1pt);
		
		\fill [color=gray] (-3.7,4.35) circle (1.1pt);
		\fill [color=gray] (-2.32,2) circle (1.1pt);

		\fill [color=black] (-1,4.5) circle (1.1pt);
		\fill [color=black] (-2,2.65) circle (1.1pt);
		
		\fill [color=black] (-.8,.33) circle (1.1pt);
		\fill [color=black] (0,1.2) circle (1.1pt);
		\end{scriptsize}
		
		\end{tikzpicture}
		
		\caption{No projection of roots in the grey area}
		
		\label{no hole.5}
		
	\end{centering}
	
\end{figure} 
\begin{Prop}\label{prop.1}
	Let $E \in M_{X,H}(v)$ be a $\mu_H$-stable vector bundle on the surface $X$. Then we have
	\begin{itemize}
		\item[(a)] Hom$_X\big(E,E(-H)[1]\big) = 0$.
		\item[(b)] The restriction $E|_C$ is a slope stable vector bundle on the curve $C$ and $h^0(C,E|_C) = r+s$. In particular, the morphism $\psi$ described in \eqref{function} is well-defined.
		\item[(c)] The object $K_E$ is of the form $K_E=E'[1]$ where $E'$ is a $\mu_H$-stable locally free sheaf on $X$ and $\text{Hom}_X\big(E', E(-H)[1]\big) = 0$.
	\end{itemize}
\end{Prop}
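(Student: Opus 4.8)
The plan is to run all three parts through Bridgeland stability near the point $o' = pr(v(\mathcal{O}_X)) = (0,1)$, using three facts. First, $E(-H)$ is again $\mu_H$-stable, and comparing slopes gives $\mathrm{Hom}_X(E,\mathcal{O}_X) = 0$ (the image of any nonzero map is a rank-$1$ subsheaf of $\mathcal{O}_X$ of slope $\le 0$, but a quotient of the stable $E$ of slope $>1/r$), hence $h^2(X,E) = 0$, and likewise $h^0(X,E(-H)) = 0$. Second, $v(i_*(E|_C)) = v(E) - v(E(-H))$, so the restriction sequence $0 \to E(-H) \to E \to i_*(E|_C) \to 0$ realises $i_*(E|_C)$ as the cone of $E(-H) \to E$. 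Third, since $c(E) = 1$, Lemma \ref{q.3} shows $E$ is $\sigma_{(0,w)}$-stable for \emph{every} $w$, while the root-free regions of Lemmas \ref{no spherical} and \ref{no spherical.2} (Figure \ref{no hole.5}) keep $E(-H)$, its twisted dual, and the cone $W_E$ semistable along the relevant segments. To compute the cohomology of $E$ I note that on the line $\mathcal{W}_0$ through $o'$ and $q = pr(v(E))$ the objects $\mathcal{O}_X$ and $E$ share a phase (Corollary \ref{cor}); since $h^0(E) \ge \chi(E) = r+s > 0$, the inclusion $\mathcal{O}_X \hookrightarrow E$ makes $\mathcal{W}_0$ a wall for $E$, so $E$ is strictly $\sigma$-semistable at a point of $\mathcal{W}_0$ with $b<0$ near $o'$, where $\mathcal{O}_X$ is stable. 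Lemma \ref{bound for h} then gives $h^0(X,E) \le \tfrac{r+s}{2} + \tfrac12\sqrt{(r+s)^2+4} < r+s+\tfrac12$, so together with $\chi(E)=r+s$ and $h^2(E)=0$ we get $h^0(X,E)=r+s$ and $h^1(X,E)=0$. The same argument applied to $E^\vee(H)$, whose Mukai vector is $(r,(r-1)H,s(r-1)^2)$, yields $h^0(E^\vee(H)) = r+s(r-1)^2$ with $h^1 = h^2 = 0$ (here the inequality $4(r-1)^2 < 2\chi+1$ needed to sharpen the bound holds because $s \ge 5$), and then Riemann–Roch for $E(-H)$, using $h^2(E(-H)) = h^0(E^\vee(H))$ by Serre duality, forces $h^1(X,E(-H)) = 0$.

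For part (a), slope-stability of $E|_C$ follows from the converse in Lemma \ref{large vol} once I exhibit one stability condition at which $i_*(E|_C)$ is stable. The natural choice is $\tilde o = \overline{pq}\cap\overline{o'o}$: there $b=0$ with $(1-r)/r < 0 < 1/r$, so both $E$ and $E(-H)[1]$ lie in $\mathcal{A}(0)$, and since $\tilde o \in \overline{pq}$ they have the same phase by Corollary \ref{cor}. Now $E$ is $\sigma_{\tilde o}$-stable by Lemma \ref{q.3}, and $E(-H)[1]$ is $\sigma_{\tilde o}$-stable because the root-free region of Lemma \ref{no spherical.2} forbids any wall between its Gieseker chamber and $\tilde o$. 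The cone triangle then presents $i_*(E|_C)$ as a $\sigma_{\tilde o}$-semistable extension of $E(-H)[1]$ by $E$, which is stable in an adjacent chamber, giving slope-stability. Finally the section count $h^0(C,E|_C) = r+s$ drops out of the long exact sequence of the restriction, using $h^0(E(-H)) = h^1(E) = h^1(E(-H)) = 0$ established above.

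Part (b) is immediate from the $\tilde o$ picture: $E$ and $E(-H)[1]$ are non-isomorphic $\sigma_{\tilde o}$-stable objects of equal phase, hence $\mathrm{Hom}_X(E,E(-H)[1]) = \mathrm{Ext}^1_X(E,E(-H)) = 0$.

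For part (c), I identify the cone $W_E$ by transporting it into the large-volume chamber attached to its projection $q' = pr(v(W_E)) = (-1/r, s/r)$. Just across $\mathcal{W}_0$, where $\mathcal{O}_X^{\oplus(r+s)}$ becomes the maximal destabiliser of $E$, the quotient $W_E$ is $\sigma$-semistable; moving it through the root-free segment $\overline{et}$ of Lemma \ref{no spherical.2} into its Gieseker chamber and applying the converse of Lemma \ref{large vol} identifies $W_E = E'[1]$ for a $\mu_H$-stable locally free sheaf $E'$ of Mukai vector $(s,-H,r)$. In particular the evaluation map is surjective with kernel $E'$ (so $\mathcal{H}^0(W_E)=0$), and the locally free statement is automatic from $0 \to E' \to \mathcal{O}_X^{\oplus(r+s)} \to E \to 0$. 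The vanishing $\mathrm{Ext}^1_X(E',E(-H)) = 0$ then follows exactly as in (b), from the analogue of $\tilde o$ for the pair $(q',p)$, where $E'$ and $E(-H)[1]$ are stable of equal phase. The main obstacle throughout is the wall-and-chamber bookkeeping: every step rests on certifying that $E(-H)$, $E^\vee(H)$, and $W_E$ meet no wall between their Gieseker chambers and the boundary conditions $\mathcal{W}_0$, $\tilde o$, and $q'$, which is precisely what the root-free regions of Lemmas \ref{no spherical} and \ref{no spherical.2} are engineered to supply; a secondary subtlety is verifying that $W_E$ really is a shifted sheaf (equivalently, that the evaluation is surjective), which I read off from its identification in the large-volume chamber.
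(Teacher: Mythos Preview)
Your overall architecture matches the paper's: bound $h^0(E)$ at the Brill--Noether wall, establish stability of $E$ and $E(-H)[1]$ at $\tilde o$ to deduce (a) and (b), then analyse $W_E$ for (c). But two of the load-bearing steps are not actually justified.

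First, you repeatedly treat the root-free regions of Lemmas~\ref{no spherical} and \ref{no spherical.2} as if they forbade walls (``the root-free region \dots forbids any wall between its Gieseker chamber and $\tilde o$''). They do not: those lemmas only certify that the relevant points lie in $V(X)$, i.e.\ that the stability conditions exist. Wall-freeness for a specific object comes from Lemma~\ref{q.3}, applied along a line $b=b_0$ on which $\mathrm{Im}\,Z_{(b_0,w)}$ takes its minimal positive value on that object. The paper carries this out for $E(-H)[1]$ along $\overline{oe'}$ with $b_3=-(r-2)/(r-1)$ (or $-1/3$ when $r=2$), verifying separately that this segment sits in the root-free region; only the combination yields stability at $e'$ and hence at $\tilde o$. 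Your alternative route to $h^1(E(-H))=0$ via $h^0(E^\vee(H))$ is legitimate in principle, but applying Lemma~\ref{bound for h} to $E^\vee(H)$ requires exactly the same kind of wall analysis on the side $b>0$, which you do not supply.

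Second, your treatment of $W_E$ in (c) has three problems. You only assert $\sigma$-\emph{semi}stability of $W_E$ just across $\mathcal{W}_0$; but semistability on a wall need not persist, and the paper proves strict stability at $\sigma_1$ by a short Jordan--H\"older argument (any factor has $t_1c(E)=t_1\in\mathbb{Z}$, forcing $\mathcal{O}_X$ as sub or quotient, which contradicts the Hom vanishings). The propagation path is also wrong: since $c(W_E)=1$, Lemma~\ref{q.3} gives wall-freeness along $\overline{oo'}$, and that is the route to the large-volume limit, not the segment $\overline{et}$. Finally, Lemma~\ref{large vol} is about push-forwards from $C$ and says nothing about objects of negative rank; the identification $W_E=E'[1]$ with $E'$ a $\mu_H$-stable locally free sheaf uses \cite[Lemma~6.18]{macri:intro-bridgeland-stability} together with the bound $v(H^{-1}(W_E))^2\ge -2$ to exclude a skyscraper $\mathcal{H}^0(W_E)$---your remark that local freeness ``is automatic from $0\to E'\to\mathcal{O}_X^{\oplus(r+s)}\to E\to 0$'' presupposes the very surjectivity you are trying to establish.
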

\begin{proof}
	The objects $E$ and $E(-H)[1]$ have the same phase with respect to the stability condition $\tilde{\sigma} \coloneqq \sigma_{(0,\tilde{w})}$ where $k(0,\tilde{w}) = \tilde{o}$, see Figure \ref{no hole.5}. There are no homomorphisms between non-isomorphic stable objects of the same phase. Hence to prove claim $(a)$, we only need to show both $E$ and $E(-H)[1]$ are $\tilde{\sigma}$-stable. 
	
	By \cite[Proposition 14.2]{bridgeland:K3-surfaces}, the $\mu_H$-stable sheaf $E$ is $\sigma_{(0,w)}$-stable where $w\gg 0$. Lemma \ref{q.3} implies that there is no wall for $E$ intersecting the line segment $(\overline{oo'})$. Thus $E$ is $\tilde{\sigma}$-stable. Lemma \ref{phase one} implies that $E(-H)[1]$ with Mukai vector $\big(-r,(r-1)H,-s(r-1)^2\big)$ is $\sigma_1\coloneqq \sigma_{(b_1,w_1)}$-stable where $b_1=-(r-1)/r$ and $w_1$ is arbitrary. Let $e'$ be the point that the line segment $\overline{qp_u}$ intersects the line $x=b_2y$ where
	\begin{equation*}
	b_2 = - \dfrac{r-2}{r-1} \;\;\;\; \text{if} \; r>2 \;\;\;\;\;\;\; \text{or} \;\;\;\;\;\;\; b_2=-\dfrac{1}{3} \;\;\;\; \text{if} \; r=2. 
	\end{equation*}
	If $s \geq \max\{r,5\}$, then
	\begin{equation*}
	-\dfrac{s(r-1)-1}{rs} < b_2 < -\dfrac{r+1}{rs}.
	\end{equation*}
	Thus the line segment $\overline{oe'}$ is located between two lines $\overline{o\gamma_{-r-1}}$ and $\overline{o\gamma_{-s(r-1)+1}}$ and it is on the grey area with no projection of roots. We claim that there is no wall for $E(-H)[1]$ intersecting the line segment $(\overline{oe'}]$. Consider a stability condition of form $\sigma_{(b_2,w)}$ where the point $k(b_2,w)$ is on $(\overline{oe'}]$. If $r>2$, then
	\begin{equation*}
	\abs{\text{Im}[Z_{(b_2,w)}(E(-H))]} = \frac{1}{r-1} = \min \left\{ \abs{\text{Im}[Z_{(b_2,w)}(r',c'H,s')] }= \abs{c'+\frac{r-2}{r-1}r'} \neq 0       \right\},
	\end{equation*}
	and if $r=2$, then $\abs{\text{Im}[Z_{(b_2,w)}(E(-H))]} = \frac{1}{3}$. Thus in any case, the minimality condition of Lemma \ref{q.3} is satisfied. Hence the stability condition $\sigma_{(b_2,w)}$ cannot be on a wall for $E(-H)[1]$. This in particular implies $E(-H)[1]$ is $\tilde{\sigma}$-stable which finishes the proof of claim $(a)$.\par 
	Consider the short exact sequence 
	\begin{equation}\label{sec}
	E \hookrightarrow i_*E|_C \twoheadrightarrow E(-H)[1]. 
	\end{equation}
	Since $E$ and $E(-H)[1]$ are $\tilde{\sigma}$-stable of the same phase, their extension $i_*E|_C$ is $\tilde{\sigma}$-semistable and the objects $E$ and $E(-H)[1]$ are its JH factors. We have $\phi_{(0,w)}(E) < \phi_{(0,w)}(i_*E|_C)$ for $w> \tilde{w}$, thus the object $i_*E|_C$ is $\sigma_{(0,w)}$-stable and by Lemma \ref{large vol}, $E|_C$ is slope stable. 
	
	The next step is to show $h^0(X,E) =r+s$. Consider a stability condition of form $\sigma_{(b_3,w_3)} \eqqcolon \sigma_3$ such that the point $k(b_3,w_3)$ is on the line segment $(\overline{qo'})$ and it is sufficiently close to the point $o'$. Lemma \ref{q.3} implies that there is no wall for $E$ which intersects the open line segment $(\overline{oo'})$. Therefore $\sigma_{(0,w)}$-stability of $E$ for $w \gg 0$ implies that it is $\sigma_3$-semistable. Moreover, $E$ has the same phase as the structure sheaf $\mathcal{O}_X$ with respect to $\sigma_3$, thus Lemma \ref{bound for h} implies that 
	$$h^0(X,E) \leq \left\lfloor \frac{r+s}{2} + \frac{\sqrt{(r+s)^2 +4}}{2} \right\rfloor = r+s.$$ The coherent sheaf $E$ is $\mu_H$-stable and has positive slope, so Hom$_X(E,\mathcal{O}_X) =0$ and 
	\begin{equation*}
	\chi(E) = r+s = h^0(X,E) - h^1(X,E).
	\end{equation*}
	Therefore $h^0(X,E) = r+s$ and the object $K_E$ has Mukai vector $v(K_E) = (-s,H,-r)$. On the other hand, since there is no wall for $E(-H)[1]$ which passes the line segment $(\overline{oe'}]$, it is stable with respect to the stability conditions on the line segment $(\overline{p_uo'})$, where $\mathcal{O}_X$ is also stable and has the same phase as $E(-H)[1]$. Thus
	\begin{equation*}\label{zero hom}
	\text{Hom$_X(\mathcal{O}_X , E(-H)[1]) = 0$}.
	\end{equation*}
	Thus the short exact sequence \eqref{sec} gives $h^0(C,E|_C) = h^0(X,E) = r+s$, which completes the proof of $(b)$. 
	
	The sheaves $E$ and $\mathcal{O}_X$ are $\sigma_3$-semistable of the same phase. Since $\mathcal{O}_X$ is $\sigma_3$-stable, the evaluation map ev$_E$ defined in \eqref{cone} is injective in the abelian category of semistable objects with the same phase as $\mathcal{O}_X$, hence the cokernel $K_E$ is $\sigma_3$-semistable. We claim that $K_E$ is $\sigma_3$-stable. Assume otherwise. Let $E_1$ be a $\sigma_3$-stable factor of $K_E$. Remark \ref{plane of having the same phase} implies that $v(E_1) = t_1v(E) +s_1v(\mathcal{O}_X)$. Similar to the argument in Lemma \ref{bound for h}, we deform the stability condition $\sigma_3$ towards the point $o'$, then
	\begin{equation*}
	0 \leq \lim\limits_{b \rightarrow 0}\text{Im}[Z_{(b,w)}(E_1)] = t_1 c(E) \leq \lim\limits_{b \rightarrow 0}\text{Im}[Z_{(b,w)}(E)] = c(E) =1.
	\end{equation*} 
	Since $t_1\text{c}(E) = t_1 \in \mathbb{Z}$, we have $t_1 = 0$ or $1$. Therefore, $\mathcal{O}_X$ is a subobject or a quotient of $K_E$. But Hom$_X(\mathcal{O}_X,K_E) = 0$ and since Hom$_X(E , \mathcal{O}_X) = 0$, we have Hom$_X(K_E,\mathcal{O}_X) =0$, a contradiction. \par
	Note that $K_E$ has Mukai vector $v(K_E) = v(E) - (r+s)v(\mathcal{O}_X) = (-s,H,-r)$ with the projection $pr(v(K_E)) =q$. Lemma \ref{q.3} shows that there is no wall for $K_E$ intersecting the open line segment $(\overline{oo'})$. Therefore, it is $\sigma_{(0,w)}$-stable where $w \gg 0$. By \cite[Lemma 6.18]{macri:intro-bridgeland-stability}, $H^0(K_E)$ is zero or a skyscraper sheaf and $H^{-1}(K_E)$ is a $\mu_H$-stable sheaf. If $H^0(K_E) \neq 0$, then for some $k>0$, we have
	\begin{equation*}
	v\big(H^{-1}(K_E)\big)^2 = (s,-H,r+k)^2 = -2sk < -2,
	\end{equation*}     
	a contradiction. Therefore $K_E=E'[1]$ for a $\mu_H$-stable coherent sheaf $E'$ on $X$. Since $E'^{\vee \vee}$ is also $\mu_H$-stable, we have 
	\begin{equation*}
	-2 \leq v(E'^{\vee\vee})^2 = v(E)^2 -2\text{rk}(E')l(E'^{\vee\vee}/E) = -2\text{rk}(E')l(E'^{\vee\vee}/E').
	\end{equation*}
	Since rk$(E') =s \geq 5$, we must have $E'= E'^{\vee\vee}$, so $E'$ is a locally free sheaf.
	
	To prove the final claim of part $(c)$, we find a stability condition such that $E'$ and $E(-H)[1]$ are stable of the same phase. Lemma \ref{phase one} implies that $E'[1]$ is $\sigma_4 \coloneqq \sigma_{(b_4,w_4)}$-stable where $b_4=-1/s$ and $w_4$ is arbitrary. We claim that $E'$ is stable with respect to the stability condition at the point $e'$. Let $t'$ be the point that the line segment $\overline{qp_u}$ intersects the line given by the equation $y = x(-s+1)$. Then the $x$-coordinate of the point $t'$ is equal to $-1/(2r-1)$ which is bigger than $-1/(r+1)$ if $r > 2$ and $t'=t$ if $r=2$. We claim that for $r=2$ the point $t = \big(-1/3 , (s-1)/3\big)$ cannot be the projection of a root. Indeed, if there exists a root $\delta = (\tilde{r},\tilde{c}H,\tilde{s})$ with $pr(\delta)= t$, then
	\begin{equation*}
	\dfrac{\tilde{c}}{\tilde{s}} = \dfrac{-1}{3} \;\;\;\; \text{and} \;\;\;\; \dfrac{\tilde{r}}{\tilde{s}} = \dfrac{s-1}{3}.
	\end{equation*}
	This implies $\abs{\tilde{s}} \geq 3$. Since $\delta^2=-2$, we have $\tilde{s}^2(s-3) = 9$ which is impossible for $s \geq 5$. Therefore there is a stability condition corresponding to any point on the line segment $(\overline{ot'}]$. By Lemma \ref{q.3}, there is no wall for $E'$ intersecting the line segment $(\overline{ot'}]$. Thus $\sigma_4$-stability of $E'$ implies that it is stable with respect to the stability condition at the point $e'$ and it has the same phase as $E(-H)[1]$. On the other hand, as we have seen there is no wall for $E(-H)[1]$ intersecting the line segment $(\overline{oe'}]$. Thus $E(-H)[1]$ is also stable with respect to the stability condition at the point $e'$, so there is no non-trivial homomorphism between $E'$ and $E(-H)[1]$. This finishes the proof of $(c)$.
\end{proof}
\subsection{The first wall} Let $M_C(r,2rs)$ be the moduli space of slope semistable rank $r$-vector bundles on the curve $C$ of degree $2rs$. Note that the vector bundle $F$ in $M_C(r,2rs)$ can be strictly semistable. The push-forward of the vector bundle $F$ to the surface $X$ has Mukai vector $v(i_*F) = (0,rH,2rs-r^2s)$. Lemma \ref{large vol} implies that $i_*F$ is semistable with respect to the stability conditions $\sigma_{(b,w)}$ in the Gieseker chamber which means $w$ is large enough and $b$ is arbitrary. By Proposition \ref{line wall}, part $(d)$, any wall for $i_*F$ is part of a line which goes through the point $p \coloneqq pr\big(v(i_*F)\big)$ if $r >2$ or it is a horizontal line segment if $r= 2$. The next proposition describes the location of the wall that bounds the Gieseker chamber for $i_*F$. Recall that $p_v=pr(v)$ and $p_u=pr(u)$ where $v =(r,H,s)$ and $u=v(i_*F)-v$. 
\begin{Prop}\label{first wall.1}
	Given a vector bundle $F \in M_C(r,2rs)$, the wall that bounds the Gieseker chamber for $i_*F$ is not below the line segment $\overline{p_up_v}$ and it coincides with the line segment $\overline{p_up_v}$ if and only if $F$ is the restriction of a vector bundle $E \in M_{X,H}(v)$ to the curve $C$. 
\end{Prop}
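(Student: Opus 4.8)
The plan is to locate the wall bounding the Gieseker chamber by combining the destabilizing triangle furnished by Proposition \ref{prop.1} with the root-free regions established in Figure \ref{no hole.5}. I would begin by recording the relevant collinearity: since $v(i_*F) = \bar v + (v-\bar v)$ numerically, the three projections $p = pr(v-\bar v)$, $q = pr(\bar v)$ and $p' = pr(v(i_*F))$ are collinear, lying on a single line $L$ through $p'$ (a horizontal line when $r=2$, where $s(v(i_*F)) = rs(2-r) = 0$). Moreover both endpoints $p,q$ lie on the parabola because $\bar v^2 = (v-\bar v)^2 = 0$, so $\overline{pq}$ is a chord of the parabola lying inside $V(X)$. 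By Proposition \ref{line wall}(d) every wall of $i_*F$ is a connected component of (a line through $p'$)$\,\cap\,V(X)$; thus $\overline{pq}$ is a candidate wall, and any competing wall is cut out by a \emph{different} line through $p'$.

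For the direction where $F = E|_C$ with $E \in M_{X,H}(\bar v)$, I would invoke the triangle $E \hookrightarrow i_*E|_C \twoheadrightarrow E(-H)[1]$ from the proof of Proposition \ref{prop.1}, whose outer terms have Mukai vectors $\bar v$ and $v-\bar v$. By Corollary \ref{cor} these two objects acquire the same phase exactly along $L$, and Proposition \ref{prop.1}, together with the absence of walls in the indicated strips, shows that $E$ and $E(-H)[1]$ are both stable at points of $\overline{pq}$. Hence the triangle genuinely destabilizes $i_*E|_C$ along $\overline{pq}$, exhibiting it as an actual wall.

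The heart of the matter is to show that no wall lies strictly between the origin and $\overline{pq}$, with equality forcing $F$ to be a restriction. Suppose $i_*F$ is destabilized at some stability condition in the parabolic segment bounded below by the parabola and above by $\overline{pq}$, and choose a destabilizing sequence $0 \to A \to i_*F \to B \to 0$ in the relevant heart with $A,B$ semistable of the same phase. Then $pr(v(A))$, $pr(v(B))$ and $p'$ are collinear on the wall's line through $p'$, and for a wall below $\overline{pq}$ the projection of some Jordan–Hölder factor is driven into the grey region of Figure \ref{no hole.5}; Lemma \ref{no spherical.2} (applied with $m=r$, $n=s(r-1)$, $\epsilon=1$, so that $\overline{q'p}$ with $q'=pr(v(W_E))=(-1/r,s/r)$ bounds that region) forbids any root projection there, while the self-intersection identity $v(i_*F)^2 = v(A)^2 + 2\langle v(A),v(B)\rangle + v(B)^2$ with $\langle v(A),v(B)\rangle \geq -2$ (as in Lemma \ref{new formula}) disposes of the non-spherical classes. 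This gives the first assertion. For the ``if and only if'', if the bounding wall equals $\overline{pq}$ the same region analysis forces the destabilizing factors to have Mukai vectors exactly $\bar v$ and $v-\bar v$; reading off $\mu_H^{\pm}$ from the endpoints $p,q$ via Lemma \ref{bound for slope} together with Lemma \ref{large vol} identifies $A$ with a $\mu_H$-stable locally free sheaf $E \in M_{X,H}(\bar v)$ and $B$ with $E(-H)[1]$, so the triangle recovers $i_*F \cong i_*E|_C$ and hence $F \cong E|_C$.

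I expect the main obstacle to be this middle step: controlling the Mukai vectors of the destabilizing Jordan–Hölder factors tightly enough that they are forced into the root-free grey region of Figure \ref{no hole.5}, and separately ruling out the non-spherical destabilizing classes, to which the ``no roots'' lemmas do not directly apply. The case $r=2$, where $p'$ lies at infinity and $v(i_*F)$ has vanishing $s$-coordinate, will need the horizontal-line version of Proposition \ref{line wall}(d) and a brief separate argument, as already anticipated in the statement.
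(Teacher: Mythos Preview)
Your setup (the collinearity of $p,q,p'$, the chord $\overline{pq}$ on the parabola) and your ``if'' direction via the triangle $E \hookrightarrow i_*E|_C \twoheadrightarrow E(-H)[1]$ from Proposition~\ref{prop.1} are fine. The gap is the central claim, which you yourself flag: that for a wall below $\overline{pq}$ ``the projection of some Jordan--H\"older factor is driven into the grey region of Figure~\ref{no hole.5}''. There is no mechanism forcing this. The destabilizing subobject $F_1$ has $pr(v(F_1))$ on the wall's line through $p'$, but that line meets the plane in points that need not lie in the root-free region at all (indeed $pr(v(F_1))$ typically lies \emph{outside} the parabola), so Lemma~\ref{no spherical.2} gives no purchase. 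Nor does the self-intersection identity you invoke suffice: it bounds $v(F_1)^2$ via Lemma~\ref{new formula}, but that alone does not pin down $v(F_1)$ to a finite list.

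The paper's argument is entirely different and does not use the grey region here. One passes to the cohomology long exact sequence of $F_1 \hookrightarrow i_*F \twoheadrightarrow F_2$ and records $v(F_1)=(r',c'H,s')$, $v(H^0(F_2))=(0,c''H,s'')$, and $v(T(F_1))=(0,\tilde r H,\tilde s)$ for the torsion part of $F_1$. The key new input is a rank inequality: the surjection $F_1 \twoheadrightarrow \ker\big(i_*F \to H^0(F_2)\big)$ factors through $i_*i^*F_1$, yielding $r-c'' \le r'+\tilde r$. Separately, Lemma~\ref{bound for slope} applied at the endpoints of the wall on $\overline{op}$ and $\overline{oq}$ gives $\mu_H^-\big(F_1/T(F_1)\big) \ge 1/r$ and $\mu_H^+\big(H^{-1}(F_2)\big) \le (1-r)/r$, hence
\[
\frac{r-c''-\tilde r}{r'} \;=\; \mu_H\big(F_1/T(F_1)\big) - \mu_H\big(H^{-1}(F_2)\big) \;\ge\; 1.
\]
These two inequalities squeeze to equality, forcing $c''=\tilde r=0$, $c'=1$, and $F_1$ torsion-free, $\mu_H$-stable with $v(F_1)=(r,H,s')$. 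Since $pr(v(F_1))$ lies on the wall's line, a wall strictly below $\overline{pq}$ gives $s'>s$, contradicting $v(F_1)^2 \ge -2$. On $\overline{pq}$ itself one gets $v(F_1)=\bar v$, and the nonzero map $F_1 \to i_*F$ factors through $i_*F_1|_C$, which is slope-stable of the same class by Proposition~\ref{prop.1}, so $F \cong F_1|_C$. Your proposed identification of the quotient with $E(-H)[1]$ is not needed.
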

\begin{proof}
	Assume that the wall $\mathcal{W}_{i_*F}$ that bounds the Gieseker chamber for $i_*F$, is below or on the line segment $\overline{p_up_v}$, see Figure \ref{wall.3}.
	\begin{figure}[h]
		\begin{centering}
			\definecolor{zzttqq}{rgb}{0.27,0.27,0.27}
			\definecolor{qqqqff}{rgb}{0.33,0.33,0.33}
			\definecolor{uququq}{rgb}{0.25,0.25,0.25}
			\definecolor{xdxdff}{rgb}{0.66,0.66,0.66}
			
			\begin{tikzpicture}[line cap=round,line join=round,>=triangle 45,x=1.0cm,y=1.0cm]
			
			\filldraw[fill=gray!40!white, draw=white] (0,0)--(-.85,.73)--(-.85, 2.6)--(1.5,2.6)--(1.5,2.2)--(0,0);

			\draw[->,color=black] (-2.3,0) -- (3,0);

			
			

			
			\draw [] (0,0) parabola (1.61,2.5921); 
			\draw [] (0,0) parabola (-1.61,2.5921);

			\draw [ color=black] (-.85,.71)--(1.5,2.2); 
			\draw [ color=black, dashed] (-2,0)--(-.85,.71);

			\draw [dashed, color=black] (0,0)--(-.85,.73);
			\draw [dashed, color=black] (-.85, 2.6)--(-.85,.73);
			\draw [dashed, color=black] (0,0)--(1.5,2.2);
			\draw [dashed, color=black] (1.5,2.6)--(1.5,2.2);
			
			\draw [color=black, dashed] (.6,.86) -- (1,1);
			\draw [color=black] (-.54,.48) -- (.6,.86);
			\draw [color=black, dashed] (-2,0) -- (-.54,.48);
			

			\draw[->,color=black] (0,0) -- (0,2.8);
			
			\draw (-2,0) node [below] {$p$};
			\draw (-.82,.78) node [left] {$p_u$};
			\draw (1.45,2.2) node [right] {$p_v$};
			\draw (1,1) node [right] {$\mathcal{W}_{i_*F}$};
			\draw (0,0) node [below] {$o$};
			\draw (.55,.86) node [above] {$q_2$};
			\draw (-.4,.43) node [above] {$q_1$};
			\draw (.2,1.3) node [above] {$\tilde{o}$};
			
			\begin{scriptsize}
			
			\fill [color=black] (0,1.25) circle (1.1pt);
			\fill [color=black] (-2,0) circle (1.1pt);
			\fill [color=black] (1.5,2.2) circle (1.1pt);
			\fill [color=black] (-.85,.71) circle (1.1pt);
			\fill [color=black] (-.54,.48) circle (1.1pt);
			\fill [color=black] (.6,.86) circle (1.1pt);
			\fill [color=black] (0,0) circle (1.1pt);
			
			\end{scriptsize}
			
			\end{tikzpicture}
			
			\caption{The first wall $\mathcal{W}_{i_*F}$}
			
			\label{wall.3}
			
		\end{centering}
		
	\end{figure}
	
	Suppose the stability condition $\sigma_{(0,w')}$ is on the wall $\mathcal{W}_{i_*F}$. Then there is a destabilising sequence $ F_1 \hookrightarrow i_*F \twoheadrightarrow F_2$ 
	of objects in $\mathcal{A}(0)$ such that $F_1$ and $F_2$ are $\sigma_{(0,w')}$-semistable of the same phase as $i_*F$ and
	\begin{equation}\label{assumption for phase}
	\phi_{(0,w)}(F_1) > \phi_{(0,w)}(i_*F) \;\;\; \text{for} \;\; w < w'.
	\end{equation}
	Taking cohomology gives a long exact sequence of sheaves
	\begin{equation}\label{exact.12}
	0 \rightarrow H^{-1}(F_1) \rightarrow 0 \rightarrow H^{-1}(F_2) \rightarrow H^0(F_1) \xrightarrow{d_0} i_*F \xrightarrow{d_1} H^0(F_2) \rightarrow 0.
	\end{equation}
	Thus $H^{-1}(F_1) = 0$ and $H^0(F_1) \cong F_1$. Let $v(F_1) = \big(r',c'H,s'\big)$. If $r' =0$, then the projection $pr(v(F_1))$ lies on the $x$-axis. By Remark \ref{plane of having the same phase}, $F_1$ and $i_*F$ cannot have the same phase with respect to $\sigma_{(0,w')}$ unless $pr(v(F_1)) = pr(v(i_*F))$, i.e. $v(F_1) = k v(i_*F)$ for some $k \in \mathbb{R}$, which is in contradiction to \eqref{assumption for phase}. Hence $r' >0$.\par
	Let $T(F_1)$ be the maximal torsion subsheaf of $F_1$ and $F_1/T(F_1)$ be its torsion-free part. Let $v\big(T(F_1)\big) = (0,\tilde{r}H,\tilde{s})$. Right-exactness of the underived pull-back $i^*$ applied to the short exact sequence $T(F_1) \hookrightarrow F_1 \twoheadrightarrow F_1/T(F_1)$ implies that   
	\begin{equation}\label{bound for rank}
	\text{rank}(i^*F_1) \leq \text{rank}\big(i^*T(F_1)\big) + \text{rank}\big(i^*\big(F_1/T(F_1)\big)\big). 
	\end{equation}
	Since $F$ is a vector bundle on $C$ and $T(F_1)$ is a subsheaf of $i_*F$, we have rank$(i^*T(F_1)) = \tilde{r}$. Thus inequality \eqref{bound for rank} implies that $\text{rank}(i^*F_1) \leq \tilde{r} + r'$. Let $v\big(H^0(F_2)\big) = \big(0,c''H,s''\big)$. Then the right-exactness of $i^*$ implies
	\begin{equation}\label{above}
	\text{rank}\big(i^*\ker d_1 \big) \leq \text{rank}(i^*F_1) \;\;\; \Rightarrow \;\;\; r-c'' \leq r'+\tilde{r}.
	\end{equation} 
	Let $q_1$ and $q_2$ be the intersection points of the wall $\mathcal{W}_{i_*F}$ with the line segments $\overline{op_u}$ and $\overline{op_v}$, respectively. Then Lemma \ref{bound for slope} implies that  
	\begin{equation*}
	\dfrac{1}{r} \leq \mu_H^{-}(F_1) = \mu_H^{-}\big(F_1/T(F_1)\big) \;\;\;\; \text{and} \;\;\;\; \mu_H^{+}\big(H^{-1}(F_2)\big) \leq \dfrac{1-r}{r}.
	\end{equation*}
	Therefore, 
	\begin{align*}
	\dfrac{r-c''-\tilde{r}}{r'} = \mu_H\big(F_1/T(F_1)\big) -  \mu_H\big(H^{-1}(F_2)\big)\; \geq \;\;\;\;\;\;\;\;\;\;\;\;\;\;\;\;\;\;\;\;\;\;\;\;\\
	\;\;\;\;\;\;\;\;\;\;\;\mu_H^{-}\big(F_1/T(F_1)\big) -  \mu_H^{+}\big(H^{-1}(F_2)\big) \geq \dfrac{1}{r} - \dfrac{1-r}{r} = 1.
	\end{align*}
	Combined with the inequalities \eqref{above}, this is only possible if all these inequalities are equalities, i.e. $r' =r-c''-\tilde{r}$,
	\begin{equation}\label{equation first}
	\mu_H^{+}\big(H^{-1}(F_2)\big)  = \mu_H\big(H^{-1}(F_2)\big) = \dfrac{1-r}{r},
	\end{equation}
	and
	\begin{equation}\label{equation second}
	\dfrac{c'-\tilde{r}}{r-c''-\tilde{r}} = \mu_H\big(F_1/T(F_1)\big) = \mu_H^{-}\big(F_1/T(F_1)\big) =  \dfrac{1}{r}. 
	\end{equation}
	Since $c'' \geq 0$ and $\tilde{r} \geq 0$, the denominators of the first and last sentences in \eqref{equation second} imply that $c'' = \tilde{r} = 0$, hence $c' =1$, $v(F_1) = (r,H,s')$ and $v(T(F_1)) = (0,0,\tilde{s})$. However $T(F_1)$ cannot be a skyscraper sheaf because $T(F_1)$ is a subsheaf of $i_*F$. Thus $T(F_1) = 0$ and $F_1$ is torsion-free. Moreover, the equations \eqref{equation first} and \eqref{equation second} imply that $F_1$ and  $H^{-1}(F_2)$ are $\mu_H$-semistable sheaves. These sheaves are indeed $\mu_H$-stable because their rank and degree $\frac{\text{ch}_1(-).H}{H^2}$ are co-prime.  
	
	The point $pr\big(v(F_1)\big)$ lies on the line through the points $q_1$ and $q_2$. Moreover, $\frac{\text{rk}(F_1)}{\text{c}(F_1)} = r$, so it also lies on the line through the origin and $p_v$. Thus $pr\big(v(F_1)\big)= q_2$, see Figure \ref{wall.3}.
	On the other hand, $\mu_H$-stability of $F_1$ implies that $v(F_1)^2 = 2r(s-s') \geq -2$, i.e. $ s' \leq s$. Hence $q_2$ cannot be on the open line segment $(\overline{op_v})$. Therefore, the wall $\mathcal{W}_{i_*F}$ is above or on the line segment $\overline{p_up_v}$.   
	
	If the wall $\mathcal{W}_{i_*F}$ coincides with the line segment $\overline{p_up_v}$, then $v(F_1) = \big(r,H,s\big)$. The non-zero morphism $d_0$ in the long exact sequence \eqref{exact.12} factors via the morphism $d_0' \colon i_*F_1|_C \rightarrow i_*F$. The objects $i_*F_1|_C$ and $i_*F$ have the same Mukai vector and so have the same phase. Proposition \ref{prop.1} implies that $i_*F_1|_C$ is $H$-Gieseker stable. Hence the morphism $d_0'$ is injective. Since $i_*F_1|_C$ and $i_*F$ have the same Mukai vector, we must have isomorphism $F_1|_C \cong F$.  
\end{proof}
Now instead of checking the possible walls above the line segment $\overline{p_up_v}$, we consider the stability conditions of form $\sigma_{(0,w)}$ which are close to the point $(0,1)$ and examine the Harder-Narasimhan filtrations. Given a semistable vector bundle $F \in M_C(r,2rs)$, the $\sigma_{(0,w)}$-semistability of $i_*F$ for $w \gg 0$ implies that it does not have any subobject $F'$ in $\mathcal{A}(0)$ with ch$_1(F') = 0$ because $\phi_{(0,w)}(F') =1 > \phi_{(0,w)}(i_*F)$. Proposition \ref{polygon} for $i_*F$ implies that there exists $w^* >0$ such that for every stability condition $\sigma_{(0,w)}$ where $\sqrt{1/(rs)}< w <w^*$, the HN filtration of $i_*F$ is a fixed sequence $0 = \tilde{E}_{0} \subset \tilde{E}_{1} \subset .... \subset \tilde{E}_{n-1} \subset  \tilde{E}_n=i_*F$
with the semistable factors $E_i = \tilde{E}_i/\tilde{E}_{i-1}$ for $1 \leq i \leq n$. Recall that the stability function $\overline{Z} \colon K(X) \rightarrow \mathbb{C}$ is defined as $\overline{Z}(E)= Z_{\big(0,\sqrt{1/rs}\big)}(E) = \text{rk}(E)-\text{s}(E) \,+\, i\,\text{c}(E)$ and the polygon $P_{i_*F}$ has vertices $\{p_i\}_{i=0}^{i=n}$ where $p_i = \overline{Z}(\tilde{E}_i)$. Let $T$ be a triangle with the vertices $z_1 \coloneqq \overline{Z}(v) = r-s +i$, $z_2 \coloneqq \overline{Z}(i_*F) = r^2s-2rs + i\,r$ and the origin.
\begin{Lem}\label{inside poly}
	The polygon $P_{i_*F}$ for any vector bundle $F \in M_C(r,2rs)$ is contained in the triangle $T = \triangle oz_1z_2$ and they coincide if and only if the bundle $F$ is the restriction of a vector bundle $E \in M_{X,H}(v)$ to the curve $C$, see Figure \ref{wall.4}.
	\begin{figure} [h]
		\begin{centering}
			\definecolor{zzttqq}{rgb}{0.27,0.27,0.27}
			\definecolor{qqqqff}{rgb}{0.33,0.33,0.33}
			\definecolor{uququq}{rgb}{0.25,0.25,0.25}
			\definecolor{xdxdff}{rgb}{0.66,0.66,0.66}
			
			\begin{tikzpicture}[line cap=round,line join=round,>=triangle 45,x=1.0cm,y=1.0cm]

			\draw[->,color=black] (0,0) -- (0,3.5);
			\draw[->,color=black] (-3,0) -- (3,0);
			\draw[color=black] (0,0) -- (2.5,3.3);
			\draw[color=black] (0,0) -- (-1.5,1);
			\draw[color=black] (-1.5,1) -- (2.5,3.3);
			
			\draw[color=black] (0,0) -- (-.5,1);
			\draw[color=black] (.5,2) -- (2.5,3.3);
			\draw[color=black] (.5,2) -- (-.5,1);
			
			\draw (0,0) node [below] {$o$};
			\draw (3,0) node [right] {Re$[\overline{Z}(-)]$};
			\draw (0,3.5) node [above] {Im$[\overline{Z}(-)]$};
			\draw (-1.5,1) node [left] {$z_1$};
			\draw (2.6,3.3) node [above] {$z_2$};
			\draw (-.5,1) node [left] {$p_1$};
			\draw (0.4,2) node [above] {$p_2$};
			
			\begin{scriptsize}
			
			\fill [color=black] (0,0) circle (1.1pt);
			\fill [color=black] (2.5,3.3) circle (1.1pt);
			\fill [color=black] (-1.5,1) circle (1.1pt);
			\fill [color=black] (-.5,1) circle (1.1pt);
			\fill [color=black] (.5,2) circle (1.1pt);

			\end{scriptsize}
			
			\end{tikzpicture}
			
			\caption{The polygon $P_{i_*F}$ is inside the triangle $T$}
			
			\label{wall.4}
			
		\end{centering}
		
	\end{figure}
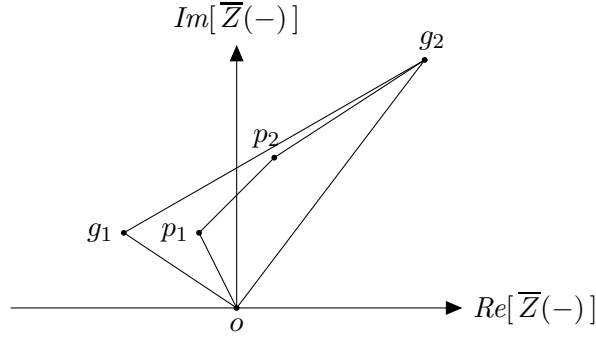	
\end{Lem}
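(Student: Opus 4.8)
The plan is to recognise the triangle $T$ as the Harder-Narasimhan polygon attached to the wall $\overline{pq}$, and then to deduce the containment $P_F \subseteq T$ from Proposition \ref{first wall.1}, which asserts that the wall bounding the Gieseker chamber is never below $\overline{pq}$. The dictionary between the two pictures is the observation that $q = pr(\bar v)$ and $p = pr(v-\bar v)$, while in the $\overline Z$-plane $g_1 = \overline Z(\bar v)$ and $g_2 - g_1 = \overline Z(v - \bar v)$. Thus the upper-left boundary $o \to g_1 \to g_2$ of $T$ is exactly the polygon of the two-step filtration with factors of class $\bar v$ and $v - \bar v$, and by Corollary \ref{cor} this is precisely the destabilisation of $i_*F$ that the wall $\overline{pq}$ records.

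First I would dispose of the reference case $F = E|_C$ with $E \in M_{X,H}(\bar v)$. Here Proposition \ref{prop.1} supplies the short exact sequence $E \hookrightarrow i_*E|_C \twoheadrightarrow E(-H)[1]$ in $\mathcal A(0)$, with $v(E) = \bar v$ and $v(E(-H)[1]) = v - \bar v$; moreover the phase comparison recorded at the point $\tilde o$ in the proof of Proposition \ref{prop.1} gives $\phi_{(0,w)}(E) > \phi_{(0,w)}(E(-H)[1])$ for $\sqrt{1/rs} < w < \tilde w$. Hence for $w$ slightly above $\sqrt{1/rs}$ this two-step sequence is the Harder-Narasimhan filtration produced by Proposition \ref{polygon}(a), so $P_F = T$ exactly. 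This settles the ``if'' direction of the equivalence.

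For the containment in general I would bound each vertex $p_i = \overline Z(\tilde E_i)$ of $P_F$ individually. Since $i_*F$ is a sheaf, any term $\tilde E_i \subset i_*F$ in $\mathcal A(0)$ fits into a four-term cohomology sequence exactly as in \eqref{exact.12}, so that its class is governed by the slopes $\mu_H^{\pm}$ of $H^{-1}(\tilde E_i)$ and $H^0(\tilde E_i)$. Applying Lemma \ref{bound for slope} along the $y$-axis segment joining the first wall to $o'$, which lies in a region free of projected roots by Lemmas \ref{no spherical} and \ref{no spherical.2}, forces $pr(v(\tilde E_i))$ into the cone cut out by the rays $\overline{oq}$ and $\overline{op}$ above $\overline{pq}$. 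Because, by Proposition \ref{first wall.1}, $i_*F$ has no wall below $\overline{pq}$, these are exactly the slope inequalities available at the wall $\overline{pq}$, and translating them back through $\overline Z$ places each $p_i$ below the broken line $o \to g_1 \to g_2$; convexity of the polygon together with the fixed endpoints $p_0 = o$ and $p_n = g_2 = \overline Z(i_*F)$ then yields $P_F \subseteq T$. Finally, if $P_F = T$ the vertex $g_1 = \overline Z(\bar v)$ is attained, so some $\tilde E_j$ has class $\bar v$; repeating the factorisation at the end of Proposition \ref{first wall.1}, where the induced map $i_*(\tilde E_j|_C) \to i_*F$ is injective because $i_*(\tilde E_j|_C)$ is Gieseker stable, identifies $F$ with the restriction of an $E \in M_{X,H}(\bar v)$.

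I expect the real work to be in the third step: converting the one-dimensional statement that the first wall is not below $\overline{pq}$ into the two-dimensional containment of the entire polygon, that is, controlling every Harder-Narasimhan term $\tilde E_i$ and not merely the maximal destabiliser. The subtlety is that between the first wall and $o'$ the object $i_*F$ may cross further walls which subdivide the polygon, so one must verify that these subdivisions never push a vertex outside $T$; carrying this out separately in the cases $r > 2$ (wall through $p'$) and $r = 2$ (horizontal wall), as in Proposition \ref{first wall.1}, is where the bookkeeping is heaviest.
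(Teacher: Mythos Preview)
Your outline has the right architecture (the wall $\overline{pq}$ is indeed the bridge between Proposition~\ref{first wall.1} and the polygon $P_F$), but the step you flag as ``the real work'' is where the argument goes wrong, and the paper's proof handles it quite differently.

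First, a simplification you are missing: since $P_F$ is convex with fixed endpoints $o$ and $g_2$, you do not need to control every vertex $p_i$. It suffices to show that the \emph{first} edge has phase at most $\phi_{(0,\sqrt{1/rs})}(\bar v)$ and the \emph{last} edge has phase at least $\phi_{(0,\sqrt{1/rs})}(v-\bar v)$; that is, you only need to bound the factors $E_1$ and $E_n$. This dissolves your worry about intermediate walls subdividing the polygon.

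Second, Lemma~\ref{bound for slope} is the wrong tool. It applies to an object which is semistable along a line segment in $V(X)$; the intermediate $\tilde E_i$ are not semistable, and if you apply it instead to the factors $E_i$ along the $y$-axis segment near $o'$, both endpoints have $x/y=0$ and you recover only the tautology $E_i\in\mathcal A(0)$. The paper's actual mechanism is: semistability of $i_*F$ at $\tilde o$ (Proposition~\ref{first wall.1}) gives a phase inequality for $E_1$ at $\tilde w$; if the phase inequality at $w=\sqrt{1/rs}$ failed, the point $(r_1/c_1,s_1/c_1)$ would lie in a wedge between two lines through $(r,s)$. Lemma~\ref{new formula} applied to the semistable $E_1$ gives $r_1s_1/c_1^2\le rs+1$, and combined with integrality ($c_1\ge 1$ forces $r_1/c_1\ge r+1/r$) this wedge lies entirely above the hyperbola $xy=rs+1$, a contradiction. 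The analogous argument with $E_n$ and the class $v-\bar v$ uses $s\ge r$.

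Third, your converse is incomplete: $p_1=g_1$ only tells you $c(E_1)=1$ and $r(E_1)-s(E_1)=r-s$, so $v(E_1)=(r+k,H,s+k)$ for some $k$. The paper uses the same two constraints (phase at $\tilde w$ and Lemma~\ref{new formula}) to force $k=0$, and only then invokes Proposition~\ref{first wall.1} to identify $F$ with $E_1|_C$.
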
 
\begin{proof}
	The polygon $P_{i_*F}$ is convex. Thus for the first statement we only need to show the following two conditions are satisfied: firstly, the line through $\overline{op_1}$ is between the lines through $\overline{oz_1}$ and $\overline{oz_2}$, and secondly the line through $\overline{z_2p_{n-1}}$ is between the lines through $\overline{z_2z_1}$ and $\overline{z_2o}$. By definition, the points $\{p_i\}_{i=0}^{i=n}$ are on the left hand-side of the line segment $\overline{oz_2}$. Therefore it suffices to show that 
	\begin{equation}\label{the first condition}
	-\dfrac{\text{Re}[\overline{Z}(E_1)]}{\text{Im}[\overline{Z}(E_1)]} \leq -\dfrac{\text{Re}[z_1]}{\text{Im}[z_1]}  \;\;\;\;\;\; \text{and} \;\;\;\;   -\dfrac{\text{Re}[z_2-z_1]}{\text{Im}[z_2-z_1]} \leq -\dfrac{\text{Re}[\overline{Z}(E_n)]}{\text{Im}[\overline{Z}(E_n)]}. 
	\end{equation}   
	Let $v(E_1) = (r_1,c_1H,s_1)$. We have $0 < \text{Im}[Z_{(0,w)}(E_1)] = c_1 \leq \text{Im}[Z_{(0,w)}(i_*F)]= r$.	
	Assume for a contradiction that the first inequality in \eqref{the first condition} does not hold, then   
	\begin{equation}\label{k.1}
	\dfrac{s_1}{c_1} - \dfrac{r_1}{c_1} > s-r.
	\end{equation}
	Therefore, the point $q_1 \coloneqq (r_1/c_1 , s_1/c_1)$ is above the line $L_1$ given by the equation $y-x= s-r$. Proposition \ref{first wall.1} implies that $i_*F$ is $\sigma_{(0,\tilde{w})}$-semistable where $k(0,\tilde{w}) = \tilde{o}$ is on the line segment $\overline{p_up_v}$, see Figure \ref{wall.3}. Therefore, 
	\begin{equation}\label{k.2}
	\phi_{(0,\tilde{w})}\big(E_1\big) \leq \phi_{(0,\tilde{w})}(v)    \;\;\; \Rightarrow \;\;\; \dfrac{s_1}{c_1} - \dfrac{r_1}{c_1}(rs\tilde{w}^2) \leq s-r (rs\tilde{w}^2).
	\end{equation}
	This shows $q_1$ is below or on the line $L_2$ given by the equation $y-x(rs\tilde{w}^2) = s-r^2s\tilde{w}^2$, see Figure \ref{dashed.1}. Since the point of intersection of the lines $L_1$ and $L_2$ is $(r,s)$, we must have 
	\begin{equation*}
	r < \dfrac{r_1}{c_1} \;\;\; \Rightarrow \;\;\; r\, \leq\, \dfrac{r_1}{c_1} - \dfrac{1}{c_1} \leq \dfrac{r_1}{c_1} - \dfrac{1}{r}.
	\end{equation*} 
	Therefore, the point $q_1$ is in the dashed area in Figure \ref{dashed.1}. The point on the line $L_1$ with the first coordinate $r+1/r$, which is denoted by $q'$, has the second coordinate $s+1/r$. On the other hand, $E_1$ is $\sigma_{(0,w)}$-semistable where $\sqrt{1/(rs)} < w<w^*$, so Lemma \ref{new formula} implies that 
	\begin{equation*}
	-2c_1^2 \leq c_1^2(H^2) -2r_1s_1 = c_1^2(2rs) -2r_1s_1 \;\;\; \Rightarrow \;\;\;  \dfrac{r_1s_1}{c_1^2} \leq rs +1.
	\end{equation*}
	Therefore, the point $q_1$ is below or on the hyperbola with equation $xy= rs+1$. But the point $q'$ and so whole of the dashed area is above the hyperbola, a contradiction.
	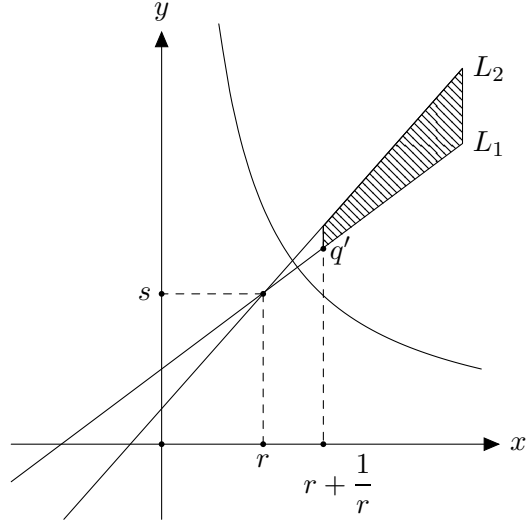
\begin{figure} [h]
		\begin{centering}
			\definecolor{zzttqq}{rgb}{0.27,0.27,0.27}
			\definecolor{qqqqff}{rgb}{0.33,0.33,0.33}
			\definecolor{uququq}{rgb}{0.25,0.25,0.25}
			\definecolor{xdxdff}{rgb}{0.66,0.66,0.66}
			
			\begin{tikzpicture}[line cap=round,line join=round,>=triangle 45,x=1.0cm,y=1.0cm]
			
			%
			\draw[pattern=north west lines, pattern color=black, thin] (1.9,2.43)--(1.9,2.6)--(4,5)--(4,4);
			
			
			\draw[scale=0.5,domain=1.26:8.5,smooth,variable=\x,black] plot ({\x},{14/\x});
			
			\draw[->,color=black] (0,-1) -- (0,5.5);
			\draw[->,color=black] (-2,0) -- (4.5,0);
			
			\draw[color=black] (-1.3,-1) -- (4,5);
			\draw[color=black] (-2,-.5) -- (4,4);
			\draw[color=black, dashed] (1.35,2) -- (1.35,0);
			\draw[color=black, dashed] (1.35,2) -- (0,2);
			\draw[color=black, dashed] (1.9, 2.43) -- (1.9,0);
			\draw[color=black] (1.9, 2.6) -- (1.9, 2.43);
			
			\draw (1.35,0) node [below] {$r$};
			\draw (0,2) node [left] {$s$};
			\draw (1.85, 2.4) node [right] {$q'$};
			\draw (2,0) node [below] {$r+\frac{1}{r}$};
			\draw (4,5) node [right] {$L_2$};
			\draw (4,4) node [right] {$L_1$};
			\draw (0,5.5) node [above] {$y$};
			\draw(4.5,0) node [right] {$x$};
			\begin{scriptsize}
			
			\fill [color=black] (0,0) circle (1.1pt);
			\fill [color=black] (1.35,2) circle (1.1pt);
			\fill [color=black] (0,2) circle (1.1pt);
			\fill [color=black] (1.35,0) circle (1.1pt);
			\fill [color=black] (1.9, 2.43) circle (1.1pt);
			\fill [color=black] (1.9,0) circle (1.1pt);
			
			\end{scriptsize}
			
			\end{tikzpicture}
			
			\caption{The point $q_1$ is in the dashed area}
			
			\label{dashed.1}
			
		\end{centering}
		
	\end{figure}
	
	Similarly, if the semistable factor $E_n$ with Mukai vector $v(E_n) = (r_n,c_nH,s_n)$ does not satisfy the second inequality in \eqref{the first condition}, then the point $q_n \coloneqq (r_n/c_n , s_n/c_n)$ is below the line $L_1'$ by the equation $y= x-s(r-1)+r/(r-1)$ and is above or on the line $L_2'$ with the equation $y = x(rs\tilde{w}^2) -s(r-1)+r^2s\tilde{w}^2/(r-1)$. Since the point of intersection of these two lines is $\big(-r/(r-1) , -s(r-1)\big)$, we have 
	\begin{equation*}
	\dfrac{r_n}{c_n} < \dfrac{-r}{r-1} \;\;\; \Rightarrow \;\;\; \dfrac{r_n}{c_n} \,\leq\, \dfrac{-r}{r-1} - \dfrac{1}{c_n(r-1)} \leq \dfrac{-r}{r-1} - \dfrac{1}{(r-1)^2}.
	\end{equation*}
	Then the same argument as above leads to a contradiction for $s \geq r$.\par
	If the vector bundle $F$ is the restriction of a vector bundle $E \in M_{X,H}(v)$, then Proposition \ref{prop.1} implies that the HN factors of $i_*E|_C$ with respect to the stability conditions close to the point $(0,1)$, are $E$ and $E(-H)[1]$. Therefore, the polygon $P_{i_*F}$ coincides with the triangle $T$. Conversely, assume for a vector bundle $F \in M_C(r,2rs)$, we have $P_{i_*F} = T$. Then $z_1 =p_1$ so $v(E_1) =(r+k,H,s+k)$ for some $k \in \mathbb{Z}$. The point $q_1 = (r+k,s+k)$ is on the line $L_1$. Proposition \ref{first wall.1} and inequality \eqref{k.2} imply that the point $q_1$ is below on on the line $L_2$, thus $k \geq 0$ (see Figure \ref{dashed.1}). Since c$(E_1) = 1$ is minimal, lemma \ref{q.3} implies that $E_1$ is $\sigma_{(0,w)}$-stable. Therefore $v(E_1)^2 = -2k(r+k+s) \geq -2$ which gives $k=0$ and Proposition \ref{first wall.1} implies that $F$ is the restriction of the vector bundle $E_1 \in M_{X,H}(v)$.   	
\end{proof}
\subsection{The maximum number of global sections}  
The next proposition shows that any vector bundle $F \in M_C(r,2rs)$ with high enough number of global sections is the restriction of a vector bundle on the surface.
\begin{Prop}\label{prop.2}
	Let $F$ be a slope-semistable rank $r$-vector bundle on the curve $C$ of degree $2rs$, where $r \geq 2$ and $s \geq \max \{5,r \}$. If $h^0(C,F) \geq r+s$, then $F$ is the restriction of a unique vector bundle $E \in M_{X,H}(v)$ to the curve $C$. In other words, the morphism $\psi \colon M_{X,H}(v) \rightarrow \mathcal{BN} = M_C(r,2rs,r+s)$, which sends a vector bundle to its restriction, is bijective. 
\end{Prop}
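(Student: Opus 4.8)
The plan is to bound $h^0(F)$ from above by combining the polygon estimate of Proposition~\ref{polygon} with the containment established in Lemma~\ref{inside poly}, and then to force equality. Since $F$ is slope-semistable, Lemma~\ref{large vol} gives that $i_*F$ is $\sigma_{(0,w)}$-semistable for $w \gg 0$, so $\phi^{+}_{(0,w)}(i_*F) < 1$ and Proposition~\ref{polygon} applies to $E = i_*F$. Writing $p_i = \overline{Z}(\tilde{E}_i)$ for the vertices of the Harder-Narasimhan polygon $P_F$, I obtain
\[
h^0(F) = h^0(X, i_*F) \;\le\; \frac{\chi(i_*F)}{2} + \frac{1}{2}\sum_{i=1}^{n} \lVert \overline{p_i p_{i-1}} \rVert .
\]
The convex Harder-Narasimhan path $o = p_0 \to p_1 \to \cdots \to p_n = g_2$ together with the chord $\overline{o g_2}$ bounds a convex region which, by Lemma~\ref{inside poly}, is contained in the triangle $T = \triangle o g_1 g_2$ and shares with it the edge $\overline{o g_2}$.

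Next I would compare lengths in the norm $\lVert \cdot \rVert$. Since $\lVert x + iy \rVert = \sqrt{x^2 + (2H^2+4)y^2}$ is the norm attached to a strictly convex ellipse, the $\lVert\cdot\rVert$-perimeter is monotone under inclusion of convex bodies; as $P_F$ and $T$ share the chord $\overline{o g_2}$, this yields
\[
\sum_{i=1}^{n} \lVert \overline{p_i p_{i-1}} \rVert \;\le\; \lVert \overline{o g_1} \rVert + \lVert \overline{g_1 g_2} \rVert .
\]
Using $g_1 = \overline{Z}(\bar v) = (r-s) + i$ and $g_2 - g_1 = \overline{Z}(v - \bar v) = \big(s(r-1)^2 - r\big) + i(r-1)$ together with $H^2 = 2rs$, a direct computation gives $\lVert \overline{o g_1}\rVert = \sqrt{(r+s)^2 + 4}$ and $\lVert \overline{g_1 g_2}\rVert = \sqrt{(s(r-1)^2 + r)^2 + 4(r-1)^2}$, while $\chi(i_*F) = rs(2-r)$. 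Setting
\[
M := \frac{\chi(i_*F)}{2} + \frac{1}{2}\big(\lVert \overline{o g_1}\rVert + \lVert \overline{g_1 g_2}\rVert\big),
\]
I would check, using $s \ge \max\{r, 5\}$, that $r + s < M < r + s + 1$. Since $h^0(F) \in \mathbb{Z}$ and $h^0(F) \le M$, this forces $h^0(F) \le r+s$; combined with the hypothesis $h^0(F) \ge r+s$ we conclude $h^0(F) = r + s$.

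It then remains to deduce $P_F = T$, at which point Lemma~\ref{inside poly} identifies $F$ with the restriction of a bundle $E \in M_{X,H}(\bar v)$. This rigidity step is the main obstacle. The equality $h^0(F) = r+s$ together with the estimate above forces $\sum_i \lVert \overline{p_i p_{i-1}} \rVert \ge 2(r+s) - \chi(i_*F)$, i.e. the Harder-Narasimhan path must be nearly as long as the two upper edges of $T$; note the gap between this integer threshold and $\lVert \overline{o g_1}\rVert + \lVert \overline{g_1 g_2}\rVert$ is less than $1$. The key point is that the vertices $p_i$ are images under $\overline{Z}$ of integral Mukai vectors, so $P_F$ is a lattice polygon inscribed in $T$; using the absence of projected roots in the relevant region (Lemmas~\ref{no spherical} and~\ref{no spherical.2}) and the strict convexity of $\lVert\cdot\rVert$, one shows that the only lattice path inside $T$ whose length reaches this threshold is the full path $o \to g_1 \to g_2$, corresponding to the two-term decomposition $\{E, E(-H)[1]\}$. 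Hence $P_F = T$ and Lemma~\ref{inside poly} gives $F = E|_C$.

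Finally, for the bijectivity of $\psi \colon M_{X,H}(\bar v) \to B_C^{r+s}(r, 2rs)$: Proposition~\ref{prop.1}(a) shows $\psi$ is well defined with $h^0(C, E|_C) = r+s$, so its image lies in the Brill-Noether locus; the statement just proved shows every $F \in B_C^{r+s}(r,2rs)$ is of the form $E|_C$, giving surjectivity. Injectivity follows since $E$ is recovered from $i_*(E|_C)$ as its first Harder-Narasimhan factor for stability conditions near $(0,1)$, via the decomposition $E \hookrightarrow i_*(E|_C) \twoheadrightarrow E(-H)[1]$ of Proposition~\ref{prop.1}.
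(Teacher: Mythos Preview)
Your overall strategy matches the paper's: bound $h^0(F)$ via Proposition~\ref{polygon}, bound the HN polygon by the triangle $T$ via Lemma~\ref{inside poly}, and observe that the resulting upper bound $M$ lies strictly between $r+s$ and $r+s+1$ (the paper writes $M = r+s+\epsilon$ with $2\epsilon \le 0.686$). Injectivity via uniqueness of the HN filtration is also exactly what the paper uses.

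The gap is the rigidity step, which you yourself flag as ``the main obstacle'' and then do not resolve. Your appeal to Lemmas~\ref{no spherical} and~\ref{no spherical.2} is misplaced: those lemmas exclude projections of roots in the stability space $V(X)$, not lattice points in the $\overline{Z}$-plane where $P_F$ lives; they play no role at this stage. And strict convexity of $\lVert\cdot\rVert$ only tells you that the perimeter is \emph{strictly} monotone under inclusion, not that the drop exceeds the specific deficit $2\epsilon$. From $h^0(F)=r+s$ you get $\sum_i \lVert \overline{p_ip_{i-1}}\rVert \ge l - 2\epsilon$, and you must rule out any convex lattice path from $o$ to $g_2$ inside $T$, other than $o\to g_1\to g_2$, whose length falls in the window $[l-2\epsilon,\,l)$.

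The paper closes this gap by an explicit numerical comparison. Since the $p_i$ are Gaussian integers and the only lattice point on the upper boundary $o\!\to\! g_1\!\to\! g_2$ of $T$ besides the endpoints is $g_1$ itself, any $P_F\subsetneq T$ must avoid $g_1$ and hence be contained in the polygon $o\,g_1'\,g_2'\,g_2$, where $g_1' = g_1+1 = (r-s+1)+i$ and $g_2'$ is the point of $\overline{g_1g_2}$ at height $2$. One then computes directly that
\[
l - l_{\mathrm{in}} \;=\; \bigl(\lVert \overline{og_1}\rVert - \lVert \overline{og_1'}\rVert\bigr) + \bigl(\lVert \overline{g_1g_2'}\rVert - \lVert \overline{g_1'g_2'}\rVert\bigr) \;\ge\; 0.69,
\]
while $2\epsilon \le 0.686$, so $l - l_{\mathrm{in}} > 2\epsilon$ and the window is empty. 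That is the missing ingredient in your argument; once you insert it, the rest of your proof is correct and coincides with the paper's.
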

\begin{proof}
	If the vector bundle $F \in \mathcal{BN}$ is the restriction of a vector bundle $E \in M_{X,H}(v)$, then $E$ is a Harder-Narasimhan factor of $i_*F$ with respect to $\sigma_{(0,w)}$ where $ \sqrt{1/rs} < w < w^*$. Thus the uniqueness of the Harder-Narasimhan filtration implies that $\psi$ is injective. \par
	For the surjectivity part, by Lemma \ref{inside poly} we only need to show for any $F \in \mathcal{BN}$ the polygon $P_{i_*F}$ coincides with the triangle $T = \triangle \, oz_1z_2$. Assume for a contradiction that $P_{i_*F}$ is strictly inside $T$. Since the vertices of $P_{i_*F}$ are Gaussian integers, $P_{i_*F}$ must be contained in the polygon $oz_1'z_2'z_2$, where $z_1' = r-s+1 \,+ \,i$ and $\,z_2' = s(r-2)+r-r/(r-1) \,+ \, 2i $, see Figure \ref{wall.12}.
	\begin{figure}[h]
		\begin{centering}
			\definecolor{zzttqq}{rgb}{0.27,0.27,0.27}
			\definecolor{qqqqff}{rgb}{0.33,0.33,0.33}
			\definecolor{uququq}{rgb}{0.25,0.25,0.25}
			\definecolor{xdxdff}{rgb}{0.66,0.66,0.66}
			
			\begin{tikzpicture}[line cap=round,line join=round,>=triangle 45,x=1.0cm,y=1.0cm]

			\draw[->,color=black] (0,0) -- (0,3.7);
			\draw[->,color=black] (-2,0) -- (5.4,0);
			
			\draw[color=black] (0,0) -- (5,3.5);
			\draw[color=black] (0,0) -- (-1.2,1);
			\draw[color=black] (-1.2,1) -- (5,3.5);
			
			\draw[color=black] (0,0) -- (-.8,1);
			\draw[color=black] (1.3,2) -- (5,3.5);
			\draw[color=black] (1.3,2) -- (-.8,1);
			
			
			\draw (0,0) node [below] {$o$};
			\draw (5.4,0) node [right] {Re$[\,\overline{Z}(-)\,]$};
			\draw (0,3.6) node [above] {Im$[\,\overline{Z}(-)\,]$};
			\draw (-1.15,1) node [left] {$z_1$};
			\draw (-.7,.95) node [right] {$z_1'$};
			\draw (5,3.5) node [above] {$z_2$};
			\draw (1.3,2) node [above] {$z_2'$};

			\begin{scriptsize}
			
			\fill [color=black] (0,0) circle (1.1pt);
			\fill [color=black] (5,3.5) circle (1.1pt);
			\fill [color=black] (-1.2,1) circle (1.1pt);
			\fill [color=black] (-.8,1) circle (1.1pt);
			\fill [color=black] (1.3,2) circle (1.1pt);
			
			\end{scriptsize}
			
			\end{tikzpicture}
			
			\caption{The polygon $P_{i_*F}$ is inside the polygon $oz_1'z_2'z_2$}
			
			\label{wall.12}
			
		\end{centering}
		
	\end{figure}
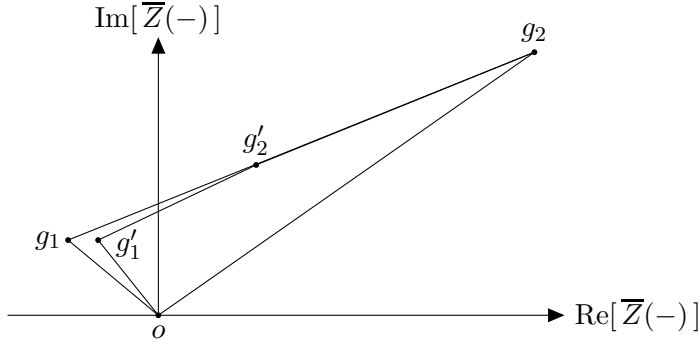
	
	The convexity of the polygon $P_{i_*F}$ and the polygon $oz_1'z_2'z_2$ gives $$\sum_{i=1}^{n} \lVert \overline{p_ip_{i-1}} \rVert \leq \lVert \overline{oz_1'} \rVert + \lVert \overline{z_1'z_2'} \rVert + \lVert \overline{z_2'z_2} \rVert \eqqcolon l_{\text{in}}.$$ 
	Note that $\lVert . \rVert$ is the non-standard norm defined in \eqref{definition of norm}. In our case, we have $H^2 =2rs$, so $\lVert x+ iy \rVert = \sqrt{ x^2 + (4rs +4) y^2}$. Let $l \coloneqq \lVert \overline{oz_1} \rVert + \lVert \overline{z_1z_2} \rVert$, then 
	\begin{equation*}
	l-l_{in} =  \lVert \overline{oz_1} \rVert - \lVert \overline{oz'_1} \rVert + \lVert \overline{z_1z_2'} \rVert - \lVert \overline{z'_1z'_2} \rVert.
	\end{equation*}
	Define $\epsilon \coloneqq \frac{2rs-r^2s}{2} + \frac{l}{2} - (r+s)$. Since $r+s \leq h^0(F)$, Proposition 3.4 implies
	\begin{equation*}
	\dfrac{2rs-r^2s}{2} + \dfrac{l}{2} - \epsilon\; = \; r+s\; \leq h^0(F) \leq \dfrac{\chi(F)}{2} + \dfrac{1}{2} \sum_{i=1}^{n}\lVert \overline{p_ip_{i-1}}\rVert  \leq \dfrac{2rs-r^2s}{2} + \dfrac{l_{\text{in}}}{2}.  
	\end{equation*} 
	Thus $l-l_{in} \leq 2\epsilon$. We have $l = \sqrt{(s+r)^2+4} + \sqrt{(s(r-1)^2 +r)^2 +4(r-1)^2}$, so
	\begin{equation*}
	2 \epsilon = \dfrac{4}{\sqrt{(r+s)^2 +4} + (r+s)} + \dfrac{4(r-1)^2}{\sqrt{ \big(s(r-1)^2+r \big)^2+4(r-1)^2 } + \big(s(r-1)^2+r \big) }.
	\end{equation*}  
	Therefore, we have $2 \epsilon \leq \frac{4}{2(r+s)} + \frac{4(r-1)^2}{2\big(s(r-1)^2+r \big)} \leq \frac{2}{r+s} + \frac{2}{s} \leq \frac{24}{35}$, because we assumed $r \geq 2$ and $s \geq \max\{r,5\}$.
	We will show $l-l_{in} \geq  0.6868$, thus $l-l_{in} > 2\epsilon$ which gives a contradiction.  
	
	We have $\lVert \overline{oz_1} \rVert - \lVert \overline{oz'_1} \rVert = \sqrt{4rs+4+(r-s)^2}- \sqrt{4rs+4+(r-s+1)^2}$, hence
	\begin{equation*}
	f_1(r,s) \coloneqq \dfrac{2s-2r -1}{2\sqrt{(r+s)^2 +4}}  \leq  \dfrac{2s-2r -1}{\sqrt{(r+s)^2 + 4} + \sqrt{(r+s)^2 +5 +2(r-s)}} = \lVert \overline{oz_1} \rVert - \lVert \overline{oz'_1} \rVert.
	\end{equation*}
	Also, $\lVert \overline{z_1z_2'} \rVert - \lVert \overline{z_1'z_2'} \rVert  = \sqrt{4rs+4 + \left(\frac{s(r-1)^2 - r}{r-1}\right)^2} - \sqrt{4rs+4 + \left(\frac{s(r-1)^2 - r}{r-1} -1\right)^2}$, thus 
	\begin{equation*}
	f_2(r,s) \coloneqq \dfrac{\frac{s(r-1)^2+r}{r-1} - \frac{2}{r-1} -\frac{5}{2}}{\sqrt{4 + \left(\frac{s(r-1)^2 + r}{r-1}\right)^2     }}= \dfrac{\frac{s(r-1)^2-r}{r-1} -\frac{1}{2}}{\sqrt{4rs+4 + \left(\frac{s(r-1)^2 - r}{r-1}\right)^2     }} \leq \lVert \overline{z_1z_2'} \rVert - \lVert \overline{z'_1z'_2} \rVert.
	\end{equation*}
	The function $f_1(r,s)$ is positive unless $s=r \geq 5$, and $f_2(r,s) = \sqrt{\frac{m}{4+m} } + \frac{-\frac{2}{r-1} -\frac{5}{2}}{\sqrt{4+m}}$ where $m= \left( \frac{s(r-1)^2 + r}{r-1}   \right)^2 $, so $f_2$ is an increasing function with respect to both $r$ and $s$. Consider the following three cases:
	\begin{itemize}
		\item If $r \geq 4$, then $f_1(r,s) \geq f_1(s,s) \geq f_1(5,5) \geq -0.05 $ and $f_2(r,s) \geq f_2(4,5) \geq 0.78$.
		\item If $r=3$, then $f_1(3,s) = \frac{s-3.5}{\sqrt{(s+3)^2+4}} \geq f_1(3,5) \geq 0.18$ and $f_2(3,s) \geq f_2(3,5) \geq 0.685$.
		\item If $r=2$, then $f_1(2,s) +f_2(2,s) = 2. \frac{s-2.5}{\sqrt{(s+2)^2+4}} \geq \frac{5}{\sqrt{53}} \geq 0.6868$.
	\end{itemize}
	Therefore, $l-l_{in} \geq 0.6868$ as claimed, which is a contradiction. Thus the polygon $P_{i_*F}$ coincides with the triangle $T$ and the morphism $\psi \colon M_{X,H}(v) \rightarrow \mathcal{BN}$ is surjective in case $\aaa$. 
\end{proof}
\begin{Cor}
	Let $F$ be a slope-semistable rank $r$-vector bundle on the curve $C$ of degree $2rs$ such that $r \geq 2$ and $s \geq \max\{5,r\}$. Then $h^0(F) \leq r+s$ and if $h^0(F) = r+s$, then $F$ is slope-stable.
\end{Cor}     
\begin{proof}
	Using the same notations as in the proof of Proposition \ref{prop.2}, we have 
	\begin{equation*}
	h^0(F) \leq \; \dfrac{2rs-r^2s}{2}  + \dfrac{1}{2} \sum_{i=1}^{n} \lVert \overline{p_ip_{i-1}} \rVert \; \leq  \dfrac{2rs-r^2s}{2} + \dfrac{l}{2} = (r+s)+ \epsilon \overset{(*)}{<} r+s+1,
	\end{equation*}
	where $(*)$ is the result of the above computation which shows $\epsilon \leq \frac{12}{35}$. If $h^0(F) = r+s$, then by Proposition \ref{prop.2}, $F$ is the restriction of a stable vector bundle on the surface, so it is stable.  
\end{proof}

\section{The final results}\label{section.6}
In this section we prove the main results.
\begin{proof}[Proof of Theorem \ref{main.2}]
	By Proposition \ref{prop.2}, the morphism $\psi \colon M_{X,H}(v) \rightarrow \mathcal{BN}$ is bijective. Therefore Proposition \ref{prop.1} implies that any vector bundle $F$ in the Brill-Noether locus $\mathcal{BN}= M_C(r,2rs,r+s)$ is slope-stable and $h^0(F) = r+s$. The moduli space $M_{X,H}(v)$ is a smooth projective variety of dimension 2. Hence we only need to show derivative of the restriction map $d\psi$ is surjective. The Zariski tangent space to the Brill-Noether locus $\mathcal{BN}$ at the point $[F]$ is the kernel of the map 
	\begin{equation*}
	k_1 \colon \text{Ext}^1(F,F) \rightarrow \text{Hom} \big( H^0(C,F) , H^1(C,F) \big),
	\end{equation*}
	where any $f \colon F \rightarrow F[1] \in \text{Ext}^1(F,F) = \text{Hom}_{C}(F,F[1])$ goes to 
	\begin{equation*}
	k_1(f) = H^0(f) \colon \text{Hom}_{C}(\mathcal{O}_C,F) \rightarrow \text{Hom}_{C}(\mathcal{O}_C,F[1]),
	\end{equation*}
	see \cite[Proposition 4.3]{bhosle:brill-noether-loci-on-nodal-curves} for details. Note that the proof in \cite{bhosle:brill-noether-loci-on-nodal-curves} is valid for any family of simple sheaves on a variety. In addition, for any vector bundle $E$ in the moduli space $M_{X,H}(v)$,
	\begin{equation*}
	T_{[E]}\big( M_{X,H}(v) \big) = \text{Hom}_X(E,E[1]). 
	\end{equation*}
	Let $i \colon C \hookrightarrow X$ be the closed embedding of the curve $C$ into the surface $X$, then $Ri_*(-) = i_*(-)$ and for a vector bundle $E$ on $X$, we have $Li^*(E) = i^*(E)$. The derivative of the restriction map 
	\begin{equation*}
	d\psi \colon  	T_{[E]}M_{X,H}(v)  \rightarrow T_{[E|_C]} \mathcal{BN},
	\end{equation*} 
	sends any $f \colon E \rightarrow E[1] \in \text{Hom}_X(E,E[1])$ to its restriction $i^*f \colon i^*E \rightarrow i^*E[1] \in \ker(k_1)$. \par
	Define $h\colon \text{id}_{\mathcal{D}(X)} \rightarrow Ri_*Li^*$ as the natural transformation for the pair of adjoint functors $Li^* \dashv Ri_*$. Given a vector bundle $E$ in the moduli space $M_{X,H}(v)$ and a morphism $\varphi \in \text{Hom}_X(E,E[1])$, we have the commutative diagram 
	\begin{equation}\label{diag}
	\xymatrix{
		E\ar[r]^-{h_E}\ar[d]_-{\varphi}&i_*i^*E\ar[d]^-{i_*i^*\varphi}\\
		E[1]\ar[r]^-{h_{E[1]}}&i_*i^*E[1].
	}
	\end{equation}	
	Therefore the following diagram is also commutative
	\begin{equation*}\label{stacks}
	\xymatrix{
		\text{Hom}_X\big(E,E[1]\big)\ar[r]^-{d\psi}\ar[dr]_-{k_2\,\coloneqq \,h_{E[1]}\,\circ \, (-)}&\text{Hom}_C\big(i^*E,i^*E[1]\big)\ar[d]_-{\sim}^-{i_*(-) \, \circ \,h_E \,\eqqcolon k_3}\ar[r]^-{k_1}&\text{Hom}\big(H^0(C,i^*E),H^1(C,i^*E)\big).\\
		&\text{Hom}_X\big(E,i_*i^*E[1]\big)}
	\end{equation*}
	Consider the following distinguished triangle   
	\begin{equation}\label{distingushed rinagle}
	E[1] \xrightarrow{h_{E[1]}} i_*i^*E[1]\xrightarrow{g} E(-H)[2].
	\end{equation}	
	Given a morphism $\xi \in \text{ker}(k_1) \subseteq \text{Hom}_C\big(i^*E,i^*E[1]\big)$, we first claim that the composition $g \circ i_*\xi \circ h_E = 0$ vanishes, so there exist morphisms $\xi'$ and $\xi''$ such that the following diagram commutes. 
	\begin{equation}\label{final diagram}
	\xymatrix{
		E\ar[r]^-{h_E}\ar@{-->}[d]_-{\exists \, \xi'}&i_*i^*E \ar[r]\ar[d]_-{i_*{\xi}}&E(-H)[1]\ar@{-->}[d]_-{\exists \,\xi''}\\
		E[1]\ar[r]^-{h_{E[1]}}&i_*i^*E[1]\ar[r]^-{g}&\;E(-H)[2].\		}
	\end{equation}
	Since $\xi \in \text{ker}(k_1)$, the composition 
	\begin{equation*}
	\xi \circ i^*\text{ev}_E \colon i^*\mathcal{O}_X^{h^0(E)} \rightarrow i^*E \rightarrow i^*E[1] \in \text{Hom}_C\big( i^*\mathcal{O}_X^{h^0(E)},  i^*E[1] \big)
	\end{equation*} 
	vanishes. Thus the adjunction $Li^* \dashv Ri_*$ gives $i_*(\xi \circ i^*\text{ev}_E) \circ h_{ \mathcal{O}_X^{h^0(E)} } = 0$. We know $i_*i^*\text{ev}_E \circ h_{ \mathcal{O}_X^{h^0(E)} } = h_E \circ \text{ev}_E$, hence the morphism $$i_* \xi \circ (h_E \circ \text{ev}_E) \colon \mathcal{O}_X^{h^0(E)} \rightarrow i_*i^*E \rightarrow i_*i^*E[1]$$ vanishes. Moreover, Propositions \ref{prop.1} implies that we have the short exact sequence 
	\begin{equation*}
	0 \rightarrow E' \rightarrow \mathcal{O}_X^{h^0(E)} \xrightarrow{\text{ev}_E} E \rightarrow 0
	\end{equation*}
	in Coh$(X)$. Since Hom$_X\big(E',E(-H)[1]\big) = 0$, applying the functor Hom$_X\big(-,E(-H)[2]\big)$ gives the exact sequence
	\begin{equation*}
	0 \rightarrow \text{Hom}_X\big(E,E(-H)[2]\big) \xrightarrow{\Phi} \text{Hom}_X\big(\mathcal{O}_X^{h^0(E)},E(-H)[2]\big) \rightarrow \text{Hom}_X\big(E',E(-H)[2]\big) \rightarrow 0.
	\end{equation*}
	Since $\Phi\,\big(g \circ i_*\xi \circ h_E\big) = g \circ (i_*\xi \circ h_E \circ \text{ev}_E) = 0$, the injectivity of $\Phi$ implies $g \circ i_*\xi \circ h_E = 0$ as claimed. Therefore there exists morphism $\xi ' \colon E \rightarrow E[1]$ in the diagram \eqref{final diagram} such that $i_*\xi \circ h_E = h_{E[1]} \circ \xi'$. The commutative diagram \eqref{diag} gives $h_{E[1]} \circ \xi' = i_*i^*\xi' \circ h_E$. Therefore $i_*\xi \circ h_E = i_*i^* \xi' \circ h_E \in \text{Hom}_X\big(E, i_*i^*E[1]\big)$ and the isomorphism $k_3$ implies $\xi = i^* \xi'$ which shows $d\psi$ is surjective. 

\end{proof}

\begin{proof}[Proof of Theorem \ref{main.1}]
	Let $(X,H)$ be a polarised K3 surface with Pic$(X) = \mathbb{Z}.H$, and let $C$ be any curve in the linear system $|H|$. The moduli space $N=M_{X,H}(v)$ is a smooth projective K3 surface. There exists a Brauer class $\alpha \in \text{Br}(N)$ and a universal $(1 \times \alpha)$-twisted sheaf $\tilde{\mathcal{E}}$ on $X \times N$.\par
	Theorem \ref{main.2} implies that the moduli space $N$ is isomorphic to the Brill-Noether locus $\mathcal{BN}$ and the restriction of the universal twisted sheaf $\tilde{\mathcal{E}}|_{C \times \mathcal{BN}}$ is a universal $(1 \times \alpha)$-twisted sheaf on $C \times \mathcal{BN}$, so $v' =  v\big(\tilde{\mathcal{E}}|_{p \times \mathcal{BN} }\big) $ for a point $p$ on the curve $C$.\par
	Let $H'$ be a generic polarisation on $N$. Then the moduli space $M_{N, H'}^{\alpha}(v')$ of $\alpha$-twisted semistable sheaves on $N$ with respect to $H'$, is isomorphic to the original K3 surface $X$ (see e.g. \cite[Theorem 2.7.1]{yoshioka:preserve-coherent-sheaves-fm-transform-surface-2}). Therefore, $M_{\mathcal{BN},H'}^{\alpha}(v') \cong X$ which completes the proof of Theorem \ref{main.1}.   
\end{proof}



\bibliography{mybib}
\bibliographystyle{halpha}

\end{document}